\theoremstyle{plain}
\newtheorem{thm}{Theorem}[section]
\newtheorem{lemma}[thm]{Lemma}
\newtheorem{prop}[thm]{Proposition}
\newtheorem{cor}[thm]{Corollary}
\theoremstyle{definition}
\newtheorem{defn}[thm]{Definition}
\newtheorem{rem}[thm]{Remark}
\newcommand{\Rea}{\mathbb{R}}
\newcommand{\Nat}{\mathbb{N}}
\renewcommand{\d}{\,{\rm d}}
\newcommand{\eps}{\varepsilon}
\newcommand{\emb}{\hookrightarrow}
\newcommand{\INT}{{\rm Int}}
\DeclareMathOperator{\sgn}{\textrm{sgn}}
\DeclareMathOperator*{\esssup}{ess\,sup}
\newcommand{\abs}[1]{\left|{#1}\right|}
\newcommand{\csup}[1]{\underset{#1}{\rm {csup}}}
\newcommand{\medint}{-\kern  -,435cm\int}
\newcommand{\medintinrigo}{-\kern  -,315cm\int}
\newcommand{\medelle}{-\kern  -,235cm L}
\newcommand{\medellenrigo}{-\kern  -,180cm L}
\newtoks\by
\newtoks\paper
\newtoks\book
\newtoks\jour
\newtoks\yr
\newtoks\pages
\newtoks\vol
\newtoks\publ
\newtoks\eds
\newtoks\proc
\newtoks\no
\def\ota{{\hbox{???}}}
\def\cLear{\by=\ota\paper=\ota\book=\ota\jour=\ota\yr=\ota
\pages=\ota\vol=\ota\publ=\ota}
\def\endpaper{\the\by, \textit{\the\paper},
{\the\jour} \textbf{\the\vol} (\the\yr), \the\pages.\cLear}
\def\endbook{\the\by, \textit{\the\book}, \the\publ.\cLear}
\def\endprep{\the\by, \textit{\the\paper}, \the\jour.\cLear}
\def\endproc{\the\by, \textit{\the\paper}, \the\publ, \the\pages.\cLear}
\numberwithin{equation}{section}
\newcommand{\norm}[1]{{\left\vert\kern-0.25ex\left\vert\kern-0.25ex\left\vert
#1
    \right\vert\kern-0.25ex\right\vert\kern-0.25ex\right\vert}}
\newcommand\Del[1]{{\color{red}\ifmmode\cancel{#1}\else\sout{#1}\fi}}
\begin{document}

\date{\today}

\title[Optimal function spaces]{Optimal function spaces and Sobolev embeddings}

\author {David Kub\'i\v cek}

\address{David Kub\'i\v cek, Department of Mathematical Analysis\\
Faculty of Mathematics and Physics\\
Charles University\\
Sokolovsk\'a~83\\
186~75 Praha~8\\
Czech Republic} 

\email{kubicek@karlin.mff.cuni.cz}

\begin{abstract}
    We establish equivalence between the boundedness of specific supremum operators and the optimality of function spaces in Sobolev embeddings acting on domains in ambient Euclidean space with a prescribed isoperimetric behavior. Our approach is based on exploiting known relations between higher-order Sobolev embeddings and isoperimetric inequalities. We provide an explicit way to compute both the optimal domain norm and the optimal target norm in a Sobolev embedding. Finally, we apply our results to higher-order Sobolev embeddings on John domains and on domains from the Maz'ya classes. Furthermore, our results are partially applicable to embeddings involving product probability spaces.
\end{abstract}

\maketitle


\section{Introduction}\label{intro}
Sobolev embeddings have been closely studied for several decades, ever since the period in which they were pioneered in the works of Sobolev \cite{So1936}, Gagliardo \cite{Ga1958},  and Nirenberg \cite{Ni1959}. Parallel to this research, isoperimetric inequalities were explored by De Giorgi \cite{Gi} and Federer and Fleming \cite{FeFl}. Some time later, Sobolev embeddings have been connected with isoperimetric inequalities in the works of Maz'ya \cite{Ma1960, Ma1961}. The study of the mentioned connection has been rapidly developing ever since, and in 2015 a comprehensive paper \cite{CiPiSl} presented a unified approach to the topic, and, moreover, a technique suitable for finding optimal target spaces in the Sobolev embeddings on fairly general underlying measure spaces.

Optimality of function spaces with respect to various classes of function spaces has been under scrutiny for a long time, and a wide variety of various points of view has been taken. This research is clearly motivated by the pure fact that one wants to have the results as sharp as possible in order to get strongest possible outcome in applications. On the other hand, the approach necessarily leads to interesting technical obstacles. For one thing, an optimal space does not have to exist. It has been noticed that when optimality is restricted to classes of spaces such as Lebesgue spaces, Lorentz spaces, Orlicz spaces, etc., then there is no guarantee that the optimal object will be found. On the other hand, the research in the last 25 years shows that when one settles for working in the (wider) environment of the so-called rearrangement-invariant spaces (called also symmetric spaces in some older literature), then it is almost certain that an optimal space (on either domain or target position) exists. This approach however brings new difficulties, the pivotal one being the fact that the optimal spaces are often described in a very implicit way, as they involve operations such as taking the associate space, or taking suprema over all equidistributed functions etc.

Consequently, a lot of effort has been spent on making the description of optimal function spaces as `explicit' as possible. These activities paved way to discoveries of certain interesting phenomena such as, for instance, a surprising connection between optimality of function spaces, their interpolation properties, and boundedness of certain nonlinear operators. Applications of these advances to sharp Sobolev embeddings exist, but exclusively restricted to domains with a Lipschitz boundary. However, as is widely known, it is desirable to have Sobolev-type embeddings for domains with much `worse' isoperimetric behavior such as domains with cusps, and also domains endowed with product probability measures whose typical example is the Gauss measure. These domains are typically characterized by their isoperimetric profile, or isoperimetric function, denoted $I$. Let us recall, for instance, that the best possible isoperimetric behavior occurs for John domains (whose subclass is that of Lipschitz domains) when $I(t)=t^{\frac{1}{n'}}$, where $n$ is the dimension and $n'$ is the conjugate index, while the domains with power cusps have $I(t)=t^{\alpha}$ with $\alpha<\frac{1}{n'}$, and the Gauss measure leads to $I(t)\approx t\sqrt{\log \frac 1t}$ near zero.

The aim of this paper is to obtain explicit description of optimal rearrangement-invariant partner spaces in Sobolev embeddings acting on domains with a general isoperimetric profile. We will achieve this by combining the ideas concerning optimality of spaces for integral operators with the approach to Sobolev embeddings via the isoperimetric profile of an underlying domain. We begin by carefully constructing relevant nonlinear operators and prove their connection to the question of optimality of function spaces. More precisely, we shall exploit the equivalence of Sobolev embeddings and isoperimetric inequalities in order to connect Sobolev embeddings on a domain having the isoperimetric profile $I$ with boundedness of two supremum operators, $S_I$ and $T_I$, defined as $$S_If(t)=\frac{1}{I(t)}\sup_{0<s\leq t}I(s)f^*(s)\quad\text{and}\quad T_If(t)=\frac{I(t)}{t}\sup_{t\leq s<1}\frac{s}{I(s)}f^*(s)$$ for $f\in\mathcal M_+(0, 1)$ and $t\in (0, 1)$, in a manner similar to that in \cite{KePi2006} and \cite{KePi2009}. We will prove in particular that the boundedness of $S_I$ is related to the optimality of domain spaces, and $T_I$ corresponds to that of  target spaces. Boundedness of operators similar to $T_I$ and $S_I$ was previously studied for example in \cite{KePhPi2014}, but only for the case when $I$  is a power. There, the operators were considered in the context of Orlicz $L_A$ and Gamma $\Gamma_{p, \phi}$. Hence, once the equivalence between the optimality of spaces and boundedness of $T_I$ and $S_I$ is established, it will enable us to recover and considerably extend these results.

The fair isoperimetric generality will bring some unavoidable restrictions on the function $I$. The main setting in the paper will be that $I$ is a quasiconcave function as this allows us to work with both operators $S_I$ and $T_I$ the way we need. There are two main conditions concerning $I$ appearing throughout the paper. The first one reads as 
\begin{align}\label{eq:00001}
\int_0^t \frac{I(s)}{s}\d s\lesssim I(t),\quad t\in (0, 1).
\end{align} 
We will see that, for example, the isoperimetric functions of product probability spaces do have this property. This condition allows us to state our first principal result. Given a rearrangement-invariant domain space $X$, we denote by $Y_X$ the smallest rearrangement-invariant target space in the relevant Sobolev embedding. We similarly denote by $X_Y$ the largest rearrangement-invariant domain space when the target space $Y$ is fixed. Detailed definitions are given below.

\begin{thm}\label{thm:Norm1} 
Let $I$ be a quasiconcave function satisfying $\eqref{eq:00001}$ and let $X$ be an~r.i.~space. Then
\begin{equation*}
    \|f\|_{Y_X}\approx\sup_{\|S_Ig\|_{X'}\leq 1}\int_0^1-I(t)g^*(t)\d f^*(t)+\|f\|_1,\quad f\in\mathcal M_+(0, 1).
\end{equation*}
\end{thm}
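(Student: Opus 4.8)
The plan is to identify $Y_X$ as the optimal (smallest) r.i.\ target space in the Sobolev embedding governed by the isoperimetric profile $I$, and then to compute its norm by a two-step duality argument, threading through the supremum operator $S_I$. First I would recall the characterization of the optimal target space coming from the isoperimetric approach: a function $f$ lies in $Y_X$ precisely when the relevant $k$-th order ``isoperimetric'' reduction operator---which on the level of representation spaces $(0,1)$ takes the form of a weighted Hardy-type operator with kernel built from $1/I$---maps the unit ball of $X$ (equivalently, its derivatives of appropriate order) boundedly, so that $\|f\|_{Y_X}$ equals the norm of $f$ tested against that operator. Concretely, $Y_X$ is the r.i.\ space whose associate space $Y_X'$ is generated by the functional $g\mapsto$ (norm in $X'$ of the adjoint reduction operator applied to $g$); adding the $\|\cdot\|_1$ term accounts for the fact that on a probability space one always has the trivial embedding into $L^1$, which must be incorporated to make $Y_X$ a genuine r.i.\ space (i.e.\ to guarantee $Y_X \hookrightarrow L^1$).

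The key steps, in order, are: (1) Write down the reduction: by the equivalence of Sobolev embeddings and isoperimetric inequalities (the technique from \cite{CiPiSl} invoked in the introduction), membership $f \in Y_X$ reduces to a one-dimensional inequality of the form $\|f\|_{Y_X} \approx \|R_I h\|$ over all $h$ with $\|h\|_X \le 1$ equidistributed-dominating $f$, where $R_I$ is the operator $R_I h(t) = \int_t^1 h^*(s)\,\frac{\d s}{I(s)}$ or its iterate. (2) Dualize once: express $\|f\|_{Y_X}$ as a supremum of $\int_0^1 f^* \varphi$ over $\varphi$ in the unit ball of $Y_X'$, and identify $Y_X'$ via the associate-operator formula, so that the constraint becomes a bound on $\int_0^1 (R_I^* \varphi) \cdot(\text{something})$ in $X'$. (3) Recognize the adjoint: the crucial identification is that the operator dual (in the Köthe sense, on $(0,1)$) to the target-side reduction is, up to the elementary manipulation $\int_0^1 -I(t) g^*(t)\,\d f^*(t)$ being an integration-by-parts reformulation of $\int_0^1 (\text{Hardy average of } g)(t)\, f^*(t)\,\d t$, exactly governed by $S_I$; here condition \eqref{eq:00001} is what lets one replace the full weighted Hardy operator appearing naturally by the supremum operator $S_I$ without loss, since \eqref{eq:00001} says the averaging operator $g \mapsto \frac{1}{I(t)}\int_0^t \frac{I(s)}{s} g(s)\,\d s$ is pointwise comparable (on monotone $g$) to $S_I g$. (4) Assemble: combining (2) and (3) gives the stated supremum over $\{g : \|S_I g\|_{X'} \le 1\}$ of $\int_0^1 -I(t) g^*(t)\,\d f^*(t)$, and a separate easy direction (the trivial embedding) contributes the $\|f\|_1$ summand; the two together are equivalent to $\|f\|_{Y_X}$.

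The main obstacle I anticipate is step (3)---precisely controlling the passage between the genuine (linear, kernel) reduction operator and the (nonlinear, but order-preserving) supremum operator $S_I$, and making the duality rigorous at the level of the functional $f \mapsto \sup\{ \int_0^1 -I(t) g^*(t)\,\d f^*(t) : \|S_I g\|_{X'}\le 1\}$. The difficulty is threefold: first, $S_I$ is not linear, so one cannot naively take a Banach-space adjoint---instead one must argue that the relevant supremum is unchanged if $g$ ranges only over nonincreasing functions, on which $S_I$ can be compared to a linear operator via \eqref{eq:00001}; second, the Stieltjes integral $\int_0^1 -I(t) g^*(t)\,\d f^*(t)$ requires care because $f^*$ is merely nonincreasing (of bounded variation), so one needs the integration-by-parts identity $\int_0^1 -I(t)g^*(t)\,\d f^*(t) = \int_0^1 f^*(t)\,\d\!\big(I(t)g^*(t)\big)$ together with quasiconcavity of $I$ to keep the right-hand side a legitimate weighted average; third, one must verify that the resulting functional genuinely defines an r.i.\ norm (in particular satisfies the triangle inequality and the lattice property), which is where the associate-space machinery and the already-established boundedness properties of $S_I$ on $X'$ enter. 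Everything else---the reduction principle, the elementary $L^1$ endpoint, and the final comparison---should follow from the cited isoperimetric framework and routine rearrangement estimates.
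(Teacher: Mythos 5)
Your plan founders at the step you yourself flag as the crux, step (3). Condition \eqref{eq:00001} does \emph{not} make the averaging operator $Pg(t)=\frac{1}{I(t)}\int_0^t\frac{I(s)}{s}g(s)\,\d s$ pointwise comparable to $S_Ig$ on nonincreasing $g$; it only says $P1\lesssim 1$. One direction, $S_Ig\lesssim Pg$, follows from quasiconcavity alone, but the converse fails: take $I(t)=t^{1/2}$ (which satisfies \eqref{eq:00001} and is admissible in the theorem) and $g=g^*=1/I$; then $S_Ig=1/I$ is finite and can be normalized in $X'$ (e.g.\ $X'=L^1$), while $Pg(t)=\frac{1}{I(t)}\int_0^t\frac{\d s}{s}\equiv\infty$. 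More to the point, the passage you need is between the constraint sets $\{g:\|R_Ig^*\|_{X'}\le 1\}$ (the unit ball of $Y_X'$, since $\|g\|_{Y_X'}=\|\frac{1}{I(t)}\int_0^tg^*\|_{X'}$) and $\{g:\|S_Ig\|_{X'}\le 1\}$, and $R_Ig^*$ and $S_Ig$ are not pointwise comparable in either direction (test $g^*=\chi_{(0,a)}$ for one direction and a nonintegrable $1/I$ for the other). So no pointwise identification of an ``adjoint'' can close the argument. In the paper this is precisely the hard content: Theorem~\ref{thm:ZOptimal} shows $\|R_If^*\|_{X'}\approx\|S_IR_If^*\|_{X'}$, i.e.\ that $S_I$ costs nothing on the cone $\{R_If^*\}$, and it is proved by a $K$-functional interpolation argument (Theorem~\ref{thm:SuperInterpolation} applied to $R_I$ between the couples $(L^1,m_{\widetilde I})$ and $((m_I)_b,L^\infty)$, via Lemma~\ref{lem:KFunctional}, Proposition~\ref{prop:KFunctional2}, Lemma~\ref{lem:BoundednessOfRI}), where \eqref{eq:00001} enters not as a pointwise comparison but through the boundedness of $T_I$ on $Y_X'$ (Theorem~\ref{cor:TIMIsBoundedOnTarget}, resting on Theorem~\ref{lem:TIHLP}); the identity $S_IG_If=G_If$ (Lemma~\ref{lem:RIFGIFvZ}) is then what lets one replace $\frac1{I(t)}\int_0^tg^*$ by an admissible nonincreasing test function. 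Your proposal offers no substitute for this mechanism.

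Two further gaps: the reverse inequality $\lambda(f)\lesssim\|f\|_{Y_X}$ is not ``an easy direction coming from the trivial embedding''. In the paper it requires, for each $f$, choosing a near-optimal $h\in X$ attaining $\|f\|_{Y_X'}$, testing with the explicit function $g(t)=\int_t^1\frac{h(s)}{I(s)}\d s$, computing $\int_0^1-I(t)k^*(t)\,\d g^*(t)=\int_0^1k^*h\le\|k\|_{X'}\|h\|_X\le\|S_Ik\|_{X'}\|h\|_X$, and then dualizing; and that dualization is only legitimate because Proposition~\ref{prop:LRIN} (via the concave-majorant functional of Lemma~\ref{lem:ConcaveNorm}) shows the right-hand functional $\lambda$ is equivalent to an r.i.~norm, so that $\lambda''=\lambda$ may be invoked. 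You anticipate the normability issue but give no argument for it, and the $\|f\|_1$ term is needed for the nondegeneracy of $\lambda$ and for handling $f^*(1-)>0$, not merely to record the embedding into $L^1$. As it stands, then, the proposal reproduces the correct statement and the correct general shape (duality through $S_I$), but the two pillars that make it true --- the norm equivalence $\|R_If^*\|_{X'}\approx\|S_IR_If^*\|_{X'}$ and the r.i.~normability of $\lambda$ --- are missing, and the mechanism proposed in their place is false.
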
 

Furthermore, using this condition we show the boundedness of the operator $T_I$ on the associate spaces of the optimal target spaces. However, in order to prove an analogous result for the operator $S_I$, we further require a condition similar to $\eqref{eq:00001}$,
\begin{align}\label{eq:00002}
\int_0^t \frac{\d s}{I(s)}\lesssim \frac{t}{I(t)},\quad t\in (0, 1),
\end{align} which we will refer to as the \emph{average property} of $I$. It is easy to see that this condition enforces integrability of $\frac{1}{I}$ and is thus more restraining than $\eqref{eq:00001}$ in this way.

The main theorem of the paper connects the boundedness of supremum operators and optimality of spaces with respect to the operator $H_I$, which is defined as 
$$H_If(t)=\int_t^1\frac{f(s)}{I(s)}\d s$$ for $f\in\mathcal M_+(0, 1)$ and $t\in (0, 1)$. In this theorem, however, we will need a few more technical assumptions. In particular, we will require $I$ belong to a certain class, denoted $\mathcal Q$, which will be defined below. Then, the second main result reads as

\begin{thm}\label{thm:main}
Let $I\in \mathcal Q$. Then an r.i.~space $X$ is the optimal domain space under the map $H_I$ for some r.i.~space $Y$ if and only if $S_I$ is bounded on $X'$. In that case,
\begin{equation}\label{eq:Optimality0}
\|f\|_{Y_X}\approx \left\|\frac{I(t)}{t}(f^{**}(t)-f^*(t))\right\|_X+\|f\|_1,\quad f\in\mathcal M_+(0, 1).
\end{equation}

Vice versa, an r.i.~space $Y$ is the optimal target space under the map $H_I$ for some r.i.~space $X$ if and only if $T_I$ is bounded on $Y'$. In that case,
\begin{equation}\label{eq:Optimality01}
\|f\|_{X_Y}\approx \left\|\int_t^1 \frac{f^*(s)}{I(s)}\d s\right\|_{Y},\quad f\in\mathcal M_+(0, 1).
\end{equation}
\end{thm}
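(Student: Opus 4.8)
The plan is to prove Theorem \ref{thm:main} by treating the domain and target statements as dual to each other, reducing both to the same underlying equivalence between boundedness of a supremum operator and optimality, established through the machinery linking Sobolev embeddings to the operator $H_I$. First I would recall (from the earlier parts of the paper) the two foundational facts: that $H_I$ is, up to equivalence, the reduced operator governing the Sobolev embedding on a domain with isoperimetric profile $I$, so that $Y_X$ is the optimal target for $X$ under $H_I$ and $X_Y$ the optimal domain for $Y$; and that optimality of r.i.\ spaces under a bounded operator is characterized by the classical construction $\|f\|_{Y_X}\approx\|H_I f^*\|_X$ for the optimal target, with the optimal domain obtained by an associate-space/down-operator duality. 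The key point is then to identify $H_I$ (acting on nonincreasing functions, modulo the lower-order term $\|f\|_1$) with the two more explicit operators appearing in \eqref{eq:Optimality0} and \eqref{eq:Optimality01}: namely the functional $f\mapsto\frac{I(t)}{t}(f^{**}(t)-f^*(t))$ on the domain side and $f\mapsto\int_t^1 f^*(s)/I(s)\,\d s$ on the target side.

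For the target-side equivalence, I would argue as follows. The optimal target norm for a given r.i.\ domain $X$ is always $\|f\|_{Y_X}\approx\|H_I f^*\|_X + \|f\|_1$; this gives \eqref{eq:Optimality01} once we know $X = X_Y$, i.e.\ once $X$ is itself optimal. An r.i.\ space $Y$ arises as an optimal target for some $X$ if and only if $Y$ equals the optimal target built from its own optimal domain, i.e.\ $Y = Y_{X_Y}$, and the standard theory of optimal partners (as in the approach of \cite{CiPiSl} and the integral-operator theory of \cite{KePi2006, KePi2009}) tells us this self-consistency is equivalent to boundedness on $Y'$ of the operator dual to $H_I$ restricted to the cone of nonincreasing functions. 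The remaining task is to compute that dual operator and show it is equivalent to $T_I$. Here the average property \eqref{eq:00002} and membership in $\mathcal Q$ enter: they let us pass from $H_I$ to a Hardy-type operator with weight $1/I$, dualize it (the dual of $g\mapsto\int_t^1 g(s)/I(s)\d s$ on the cone is, after the customary level-function manipulations, comparable to $T_I$ because $T_I$ is precisely $\frac{I(t)}{t}\sup_{t\le s<1}\frac{s}{I(s)}(\cdot)^*$), and identify the associate-space condition ``$H_I'$ bounded on $Y'$'' with ``$T_I$ bounded on $Y'$''. The conditions on $I$ are exactly what make $I$ and its quasiconcave envelope interchangeable inside these estimates, and what make the supremum in $T_I$ equivalent to the corresponding integral average.

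For the domain-side equivalence, the strategy is the mirror image, using that the optimal domain $X_Y$ for a fixed target $Y$ is obtained by the associate-space construction $X_Y' = $ (r.i.\ space with norm $\|H_I' g^*\|_{Y'}$), so that $X$ is an optimal domain for some $Y$ exactly when $X = X_{Y_X}$, which by the duality above is equivalent to $S_I$ being bounded on $X'$ (note $S_I$ and $T_I$ are formally associate/dual supremum operators, with $S_If(t)=\frac{1}{I(t)}\sup_{0<s\le t}I(s)f^*(s)$ playing on the domain side the role $T_I$ plays on the target side). Then, granting that $X = X_{Y_X}$, the optimal target formula $\|f\|_{Y_X}\approx\|H_I f^*\|_X+\|f\|_1$ must be rewritten in the form \eqref{eq:Optimality0}; this is the place where one shows $\|H_I f^*\|_X \approx \|\frac{I(t)}{t}(f^{**}(t)-f^*(t))\|_X$ for $X$ optimal, using the boundedness of $S_I$ on $X'$ together with the identity $\int_t^1 \frac{f^*(s)}{I(s)}\d s$ versus $\frac{I(t)}{t}(f^{**}(t)-f^*(t))$ — the latter expression being, after integration by parts and the condition \eqref{eq:00001}, an equivalent quasi-norm on the cone, precisely when the relevant supremum operator is bounded. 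I would first treat \eqref{eq:00001} and \eqref{eq:00002} as black boxes giving the needed pointwise and integral comparisons, then assemble the four implications.

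The main obstacle I anticipate is the careful handling of the associate-space duality on the cone of nonincreasing functions: passing between ``$H_I$ bounded from $X$ to $Y$'', the optimal-partner constructions, and the supremum operators $S_I$, $T_I$ requires the down-operator/level-function calculus and a precise version of the duality $\big(\int_t^1 g/I\big)' \approx T_I$, all of which are only valid under the structural hypotheses on $I$ ($I\in\mathcal Q$, the average property, quasiconcavity). Ensuring these hypotheses are invoked exactly where needed — and that the equivalence constants do not secretly depend on $f$ — is the delicate bookkeeping at the heart of the argument; the rest is an orchestration of facts already assembled earlier in the paper.
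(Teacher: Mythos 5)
Your plan is built on a ``foundational fact'' that is false, and it outsources to ``standard theory'' exactly the steps that constitute the substance of the theorem. You assert that the optimal target norm is always $\|f\|_{Y_X}\approx\|H_I f^*\|_X+\|f\|_1$, and later that, for optimal $X$, $\|H_I f^*\|_X\approx\bigl\|\tfrac{I(t)}{t}(f^{**}(t)-f^*(t))\bigr\|_X$. Neither holds: the optimal target is constructed through its \emph{associate} norm, $\|g\|_{Y_X'}=\bigl\|\tfrac{1}{I(t)}\int_0^t g^*(s)\d s\bigr\|_{X'}$, not by applying $H_I$ to $f^*$ and measuring in $X$. Concretely, take $I(t)=t^{1/2}$ (the John case $n=2$, $m=1$, which lies in $\mathcal Q$), $X=L^{4/3}$ (so $S_I$ is bounded on $X'=L^4$ and $X$ is an optimal domain), and $f=\chi_{(0,a)}$: then $\|H_I f^*\|_X+\|f\|_1\approx a^{5/4}+a\approx a$, whereas $\bigl\|\tfrac{I(t)}{t}(f^{**}(t)-f^*(t))\bigr\|_X+\|f\|_1\approx a^{1/4}\approx\|f\|_{Y_X}$ (the optimal target being $L^{4,4/3}$). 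So the two functionals you propose to identify are not comparable, and the quantity $\|H_I f^*\|_{Y}$ that does appear in the theorem is measured in the \emph{target} space $Y$; moreover, the collapse of the true optimal domain norm $\sup_{h\sim f}\|H_I h\|_Y$ (Proposition~\ref{prop:OptimalDomain}) to $\|H_I f^*\|_Y$ is itself part of the conclusion \eqref{eq:Optimality01} and is only proved under boundedness of $T_I$ on $Y'$ (Theorem~\ref{thm:hvezdickadomeny}); it cannot be cited as an a priori ``classical construction''.

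The duality picture you invoke is also off: the associate operator of $H_I$ is $R_I$, not $T_I$, and $S_I$, $T_I$ are not mutually associate supremum operators; the passage from $R_I$ to $S_I$ and $T_I$ is the technical core of the paper (the $K$-functional and interpolation arguments of Theorems~\ref{thm:ZOptimal}, \ref{lem:TIHLP}, \ref{cor:TIMIsBoundedOnTarget} and Proposition~\ref{prop:domainInt}, which give the necessity directions ``$X$ optimal $\Rightarrow S_I$ bounded on $X'$'' and ``$Y$ optimal $\Rightarrow T_I$ bounded on $Y'$''). Likewise, there is no ready-made theorem at this level of generality saying that self-consistency $X=X_{Y_X}$ (resp.\ $Y=Y_{X_Y}$) is equivalent to boundedness of a dual operator on $X'$ (resp.\ $Y'$); proving this is what the paper actually does. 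Sufficiency of $S_I$-boundedness needs Theorem~\ref{thm:Norm2} (where the class-$\mathcal Q$ constants $c,d$ and the level-function argument enter) together with the explicit verification that $\bigl\|\tfrac{I(t)}{t^{2}}\int_0^t\tfrac{r}{I(r)}f^*(r)\d r\bigr\|_X\approx\|f\|_X$; sufficiency of $T_I$-boundedness needs the pointwise estimate $\tfrac{I(t)}{t}(f^{**}(t)-f^*(t))\lesssim\int_t^1\tfrac{I(s)}{s^{2}}(f^{**}(s)-f^*(s))\d s$, the inclusion $Y\subset\Lambda_I$ extracted from $T_I1\in Y'$, and boundedness of $f\mapsto f^{**}$ on $Y$ via \eqref{eq:BoundedMaximalOp}. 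None of these ingredients, nor substitutes for them, appear in your outline, so the proposal as written does not amount to a proof.
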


The principal achievement of Theorem~\ref{thm:main} is the explicitness of formulae for norms governing the optimal spaces. This fact becomes particularly useful in applications when particular function spaces are in question and their optimal partner spaces are to be nailed down.

Let us recall that the quantity $f^{**}(t)-f^*(t)$, which appears in~\eqref{eq:Optimality0}, is known to measure, in a certain sense, the symmetrized oscillation of a function. It has been regularly surfacing in various parts of analysis since early 1980's when it was first used in order to introduce the weak version of $L^{\infty}$ and some other related function classes in~\cite{Ben:81}. The importance of this quantity partly follows from the classical inequality
\begin{equation*}
    f^{**}(t)-f^{*}(t) \lesssim (\nabla f)^{**}(t)t^{\frac{1}{n}},
\end{equation*}
which holds, with a dimensional constant, for every smooth function $f$ with compact support on $\mathbb R^n$ and every positive $t$. The same quantity was utilized several times in various contexts, for example in order to obtain a sharper form of a Sobolev embedding in~\cite{Bas:03}, see also~\cite{Mil:04,Pus:05,Mar:06}, and it is of importance in the theory of classical Lorentz spaces and their applications, see~\cite{Car:05,Car:08}. Its indispensability was beautifully explained by Sinnamon in~\cite[Section~3.5]{Sin:07}. On the other hand, one has to be careful when working with it since it does not possess any reasonable general monotonicity property, and structures built upon it are notoriously known to lack linearity and normability.

Utilizing Theorem~\ref{thm:main}, we prove the intimate relation between Sobolev embeddings and the action of supremum operators for Maz'ya class of domains $\mathcal J_\alpha$ for $\alpha\in \left[\frac{1}{n'}, 1\right)$. This will be done in the last section of the paper. However, if one can prove the equivalence of the Sobolev embeddings and isoperimetric inequalities for a function $I$ of one's choice satisfying the assumptions of Theorem~\ref{thm:main}, one then gets the mentioned equivalence even on domains with isoperimetric profile of $I$. In our case, we use the equivalence of Sobolev embeddings and the isoperimetric inequalities for the domains from the Maz'ya class $\mathcal J_\alpha, \alpha\in\left[\frac{1}{n'}, 1\right)$. In particular, we recover the result for Lipschitz or, more generally, John domains from \cite{KePi2009}.

As the results rely on the boundedness of the Hardy-type operator $f\mapsto \frac{1}{t}\int_0^t f(s)\d s$, they are not generally applicable to $\mathcal J_1$, as the function $\frac{1}{I}$ is not integrable near zero. We will see that the norm of the optimal target space cannot be expressed as in $\eqref{eq:Optimality0}$ for certain functions $I\in\mathcal J_1$.\\


The paper is organised as follows. The second and preliminary section covers background results and is divided into four subsections. First we recall the notion of the nonincreasing rearrangement and so-called rearrangement-invariant Banach function spaces, which will form our main framework. It will, however, be necessary to delve a bit deeper and work with quasi-Banach function spaces, such as the weak Lebesgue space $L^{1, \infty}$. In the second subsection we introduce a few properties of functions and name typical examples of functions possessing them. The third subsection covers Sobolev spaces built upon rearrangement-invariant spaces and their connection to isoperimetric inequalities. The fourth subsection is devoted to the interpolation theory and, in particular, to the theory of the $K$-functional.

In the third section we study the basic properties of the supremum operators $S_I$ and $T_I$ and. Here, we will start with a very general function $I$, requiring it only to be nondecreasing, and we will demand certain other properties as we work through the section. It is mainly due to the operator $S_I$, and the relevant Marcinkiewicz type space $m_I$, that we are forced to work with quasi-Banach function spaces. However, the condition $\eqref{eq:00002}$ characterizes when the $m_I$ is in fact a Banach space, and implies subadditivity of $S_I$. The end of the section then calls into play the condition $\eqref{eq:00001}$ and we show its equivalence to two other statements which will play a crucial role in the main, fourth section.

The fourth section finally connects optimal spaces with the boundedness of supremum operators. In its first subsection we present an alternative description of the associate optimal norm via a functional which admits boundedness of the operator $S_I$. This is in turn used to describe the optimal target norm. Starting with the second subsection, we find an alternative description of the optimal target norm under the assumption of boundedness $f\mapsto \frac{1}{t}\int_0^t f(s)\d s$. The culmination of the section is then the third subsection which establishes equivalence between optimal spaces and boundedness of $T_I$ or $S_I$ on their associate spaces.

The fifth section connects the results we obtained in the previous sections with Sobolev embeddings. We show that the product probability spaces satisfy the main condition $\eqref{eq:00001}$. We then use Theorem~\ref{thm:main} to demonstrate a few examples both on the domain part and the space part.


\section{Preliminaries}\label{preli}

Throughout the paper, we write $A\lesssim B$ if $A$ is dominated by a constant multiple of $B$, independent of all quantities involved; these quantities will usually be evident from the context. By $A\approx B$ we mean that both $A\lesssim B$ and $A\gtrsim B$.

\subsection{Rearrangement invariant spaces}

In this section we recall some basics from the theory of rearrangement invariant spaces. Proofs and more details can be found in first two chapters of \cite{BS}.

Let $(\Omega, \mu)$ be a nonatomic $\sigma$-finite measure space. We set 
\begin{align*}
\mathcal M(\Omega, \mu)&=\{f\colon\Omega\to [-\infty, \infty]\colon f\mbox{ is $\mu$-measurable in $\Omega$}\},\\
\mathcal M_+(\Omega, \mu)&=\{f\in\mathcal M(\Omega, \mu)\colon f\geq 0\}
\end{align*}
and
\begin{align*}
\mathcal M_0(\Omega, \mu)&=\{f\in\mathcal M(\Omega, \mu)\colon f\mbox{ is finite $\mu$-a.e. in $\Omega$}\}. \hspace{1cm}
\end{align*}

We will often, for brevity, write only $\mathcal M(\Omega)$ if there is no risk of confusion, and similarly for the other two sets. When $\Omega\subset\Rea$ is measurable, unless stated otherwise, we will consider the Lebesgue measure which we will be denoted by $\lambda$. When considering a unit interval $(0, 1)$, which will be of particular interest to us, we will simply write $\mathcal M(0, 1)$.

Given $f\in\mathcal M(\Omega)$, we define its \emph{nonincreasing rearrangement}, denoted $f^*$, by
\begin{equation*}\label{eq:NonincreasingRearrangement}
f^*(t)=\inf\{\lambda\geq 0: \mu(\{\abs{f}>\lambda\})\leq t\},\quad t\in [0, \infty).
\end{equation*}

The nonincreasing rearrangement is monotone i.e. $$\abs{f}\leq \abs{g}\ \ \mu\text{-a.e. implies}\ \  f^*\leq g^*.$$
 
\color{black}
The nonincreasing rearrangement satisfies \emph{Hardy-Littlewood inequality}
\begin{equation}\label{eq:HLIneq}
\int_\Omega \abs{f(x)g(x)}{\rm d}\mu(x) \leq \int_0^\infty f^*(t)g^*(t)\d t,\quad f, g\in\mathcal M(\Omega). 
\end{equation} 
The operation $f\mapsto f^*$ is not subadditive and only satisfies the weaker condition \begin{equation}\label{eq:Almostsubadditivity}(f+g)^*(t_1+t_2)\leq f^*(t_1)+g^*(t_2),\quad t_1, t_2>0.\end{equation} It turns out that passing to the so called \emph{maximal nonincreasing rearrangement}, defined by
\begin{equation*}\label{eq:MaximalNonincreasingRearrangement}
f^{**}(t)=\frac{1}{t}\int_0^t f^*(s)\d s,\quad f\in\mathcal M(\Omega), t\in (0, \infty),
\end{equation*} we gain subadditivity. To be precise, it holds that $(f+g)^{**}\leq f^{**}+g^{**}$ for $f, g\in\mathcal M(\Omega)$. The maximal nonincreasing rearrangement is also nonincreasing and we have $f^*\leq f^{**}$.

Having defined the nonincreasing rearrangement, we are ready to define the notion of a rearrangement-invariant (r.i.) Banach function norm.

\begin{defn}
A mapping $\rho\colon \mathcal M_+(0, 1)\to [0, \infty]$ is called \emph{rearrangement invariant Banach function norm}, or r.i.~norm for short, if it satisfies the following conditions:
\begin{enumerate}[(P1)]
	\item $\rho(f)=0\iff f=0 \ $a.e., \\$\rho(af)=a\rho(f), \quad f\in\mathcal M_+(0, 1), a\geq 0$,\\ $\rho(f+g)\leq\rho(f)+\rho(g),\quad f, g\in\mathcal M_+(0, 1)$,
	\item $f\leq g\ \mbox{a.e.}\implies \rho(f)\leq\rho(g), \quad f, g\in\mathcal M_+(0, 1)$,
	\item $f_n\nearrow f\  $a.e. $\implies \rho(f_n)\nearrow \rho(f),\quad f, f_n\in\mathcal M_+(0, 1), n\in\Nat$,
	\item $\rho(\chi_{(0, 1)})<\infty$,
	\item $\int_0^1 f(t)\d t\lesssim \rho(f),\quad f\in\mathcal M_+(0, 1)$,
	\item $\rho(f)=\rho(f^*),\quad f\in\mathcal M_+(0, 1)$.
\end{enumerate}
\end{defn}

Sometimes we will work with a functional which is not a norm, but still satisfies rearrangement invariance -- so-called r.i.~quasinorm.
\begin{defn}
A mapping $\rho\colon \mathcal M_+(0, 1)\to [0, \infty]$ is called \emph{rearrangement-invariant quasi-Banach function norm}, or r.i.q.~norm for short, if it satisfies conditions (P2), (P3), (P4), (P6) and
\begin{enumerate}[(Q1)]
	\item $\rho(f)=0\iff f=0 \ $a.e., \\$\rho(af)=a\rho(f), \quad f\in\mathcal M_+(0, 1), a\geq 0$,\\ $\exists C\geq 1\ \rho(f+g)\leq C\rho(f)+\rho(g),\quad f, g\in\mathcal M_+(0, 1)$.
\end{enumerate}
\end{defn}

When $\rho$ is an r.i.q.~norm, we define its \emph{associate functional}, $\rho'$, by 
\begin{equation*}\label{eq:AssociateNorm}
\rho'(f)=\sup_{\rho(g)\leq 1} \int_0^1 f(t)g(t)\d t,\quad f\in\mathcal M_+(0, 1).
\end{equation*}
An immediate consequence of the definition of the associate functional is \emph{Hölder's inequality}
\begin{equation*}\label{eq:HölderInequality}
\int_0^1 f(t)g(t)\d t\leq \rho(f)\rho'(g),\quad f, g\in\mathcal M_+(0, 1),
\end{equation*} under the convention $0\cdot\infty =0$ on the right-hand side.

When $\rho$ is an r.i.~norm, its associate norm $\rho'$ is an r.i.~norm as well and obeys the \emph{principle of duality}, that is,
\begin{equation*}\label{eq:PrincipleOfDuality}
\rho''\coloneqq (\rho')'=\rho.
\end{equation*}
Given $f, g\in\mathcal M_+(0, 1)$, \emph{Hardy's lemma} asserts that 
\begin{equation}\label{eq:HardyLemma}
f^{**}(t)\leq g^{**}(t),\ t\in (0, 1) \implies \int_0^1 f^*(t)h(t)\d t\leq \int_0^1 g^*(t)h(t)\d t
\end{equation} for every $h\in\mathcal M_+(0, 1)$ nonincreasing. An important consequence of Hardy's lemma and the principle of duality is the \emph{Hardy-Littlewood-Pólya (HLP) principle}, which reads as follows:
\begin{equation}\label{eq:HLP}
f^{**}(t)\leq g^{**}(t),\ t\in (0, 1) \implies \rho(f)\leq\rho(g)
\end{equation} whenever $\rho$ is an r.i.~norm.

For an r.i.q~norm $\rho$ we further define $X=X(\rho)$ as a collection of all $f\in\mathcal M(0, 1)$ such that $\rho(\abs{f})<\infty$. Equipping $X$ with a quasinorm defined by
$\|f\|_X\coloneqq \rho(\abs{f})$ for $f\in X$, we immediately see that $X=(X, \|\cdot\|_X)$ is a quasinormed linear space. By \cite[Corollary 3.8]{NePe}, $(X, \|\cdot\|_X)$ is a complete metric space, and spaces defined in this manner are called \emph{rearrangement-invariant quasi-Banach function spaces} or, as we will often say for brevity, \emph{r.i.q.~spaces}. If $\rho$ is in fact an r.i.~norm, the space $X=X(\rho)$ is called \emph{rearrangement-invariant Banach function space} or briefly r.i.~space. By $X'$ we denote the space corresponding to $\rho'$ and call it the \emph{associate space} of $X$. 

By $X_b$ we denote the closure of simple functions in the space $X$.

The \emph{fundamental function} corresponding to an r.i.q.~space $X$, $\varphi_X$, is defined by
\begin{equation*}\label{eq:FundamentalFunction}
\varphi_X(t)=\|\chi_{(0, t)}\|_X,\quad t\in (0, 1).
\end{equation*} 
The fundamental function of r.i.~space $X$ satisfies $$\varphi_X(t)\cdot\varphi_{X'}(t)=t,\quad t\in (0, 1).$$

A consequence to this is that if $X$ is an r.i.~space, then $\varphi_X$ is a \emph{quasiconcave} function, that is $$t\mapsto \varphi_X(t)\mbox{ is nondecreasing, }\quad t\mapsto \frac{\varphi_X(t)}{t}\ \mbox{is nonincreasing}$$ and $\varphi_X(t)=0 \iff t=0$.

Whenever $\varphi$ is a quasiconcave function, there exists its \emph{least concave majorant}, say $\Tilde{\varphi}$, which satisfies $$\frac{1}{2}\Tilde{\varphi}(t)\leq \varphi(t)\leq \Tilde{\varphi}(t),\quad t\in(0, 1).$$ Furthermore, every r.i.~space $X$ can be equivalently renormed so that $\varphi_X$ is a concave function -- we will from now on assume that every r.i.~space has been renormed in this fashion.

Let now $X$ and $Y$ be two r.i.q.~spaces. We write $X\subset Y$ if $f\in X\Rightarrow f\in Y$. When $T$ is an operator on $\mathcal M_+(0, 1)$, we say that $T$ is \emph{bounded} from $X$ to $Y$ if 
\begin{equation*}\label{eq:BoundedOperator}
\|Tf\|_Y\lesssim\|f\|_X,\quad f\in X,
\end{equation*} and denote this fact by $T\colon X\to Y$. If $X=Y$, we say that $T$ is bounded on $X$. In the particular case when $T=Id$, an inclusion operator, we have \cite[Corollary 3.10]{NePe}
\begin{equation*}\label{eq:ContinuousInclusion}
X\subset Y\iff Id\colon X\to Y.
\end{equation*} In other words, inclusions between r.i.q.~spaces are always continuous. The fact that $Id\colon X\to Y$ will be denoted as $X\emb Y$.

We say that an operator $T'$ on $\mathcal M_+(0, 1)$ is an \emph{associate operator} of $T$ if
\begin{equation*}
\int_0^1 (Tf)(t)g(t)\d t = \int_0^1 f(t) (T'g)(t)\d t,\quad f, g\in\mathcal M_+(0, 1).
\end{equation*} For two r.i.~spaces $X$ and $Y$ one sees that 
\begin{equation}\label{eq:associateoperatorss}
    T\colon X\to Y \iff T'\colon Y'\to X'
\end{equation} and $\|T\|=\|T'\|$. 

For $s>0$ the dilation operator $E_s$ defined for $f\in\mathcal M(0, 1)$ by 
\begin{equation*}
(E_s f)(t)= f\left(\frac{t}{s}\right)\chi_{(0, \min\{s, 1\})}(t)\quad t\in (0, 1).
\end{equation*} It is proved in \cite[Chapter 3, Proposition 5.11]{BS} for r.i.~spaces and, more generally, in \cite[Theorem 3.23]{NePe} for r.i.q.~spaces, that $E_s$ is bounded on every r.i.q.~space.

\begin{defn}
Let $T$ be an operator on $\mathcal M_+(0, 1)$ and $X$ and $Y$ be r.i.~spaces. We say that $Y$ is an \emph{optimal target space} for $X$ under the mapping $T$, if $T\colon X\to Y$ and for every r.i.~space $Z$ the following implication holds:
\begin{equation*}
T\colon X\to Z\implies Y\emb Z.
\end{equation*} 

Vice versa, we say that $X$ is an \emph{optimal domain space} for $Y$ under the mapping $T$, if $T\colon X\to Y$ and for every r.i.~space $Z$ the following implication holds:
\begin{equation*}
T\colon Z\to Y\implies Z\emb X.
\end{equation*}
\end{defn}

In the main section we will use the \emph{level function} which is closely related to the nonincreasing rearrangement.

\begin{defn}
Let $f\in M_+(0, 1)$. Then the level function of $f$, denoted $f^\circ$, is the derivative of the least concave majorant of $t\mapsto\int_0^t f(s)\d s, t\in (0, 1)$.
\end{defn}

Take note that, as $f^*$ is nonincreasing, $t\mapsto \int_0^t f^*(s)\d s$ is a concave function, and so 
\begin{equation}\label{eq:levelfunctionandrearrangement}
\int_0^t f^\circ (s)\d s\leq \int_0^t f^*(s)\d s,\quad f\in\mathcal M_+(0, 1), t\in (0, 1).
\end{equation}

G. Sinnamon proved in \cite[Corollary 2.4]{GS} that 
\begin{equation}\label{eq:LevelFunction}
\|f^\circ\|_{X'}=\|f\|_{X_d'},\quad f\in\mathcal M_+(0, 1).
\end{equation}
Here, $\|\cdot\|_{X_d'}$ refers to the \emph{down dual associate norm} of an r.i.~space $X$, which is defined by 
\begin{equation*}
\|f\|_{X_d'}=\sup_{\|g\|_X\leq1}\int_0^1 f(t)g^*(t)\d t,\quad f\in\mathcal M_+(0, 1).
\end{equation*} Evidently $\|f\|_{X_d'}\leq \|f\|_{X'}$ for every $f\in\mathcal M_+(0, 1)$. Observe, however, that $\|f\|_{X_d'}= \|f\|_{X'}$ whenever $f$ is nonincreasing.

Classical examples of r.i.~spaces would be \emph{Lebesgue} $L^p(0, 1)$ spaces, where $1\leq p\leq \infty$, whose norm is defined by
\begin{equation}\label{eq:LpNorm1}
\|f\|_p=\left(\int_0^1\abs{f(t)}^p \d t\right)^\frac{1}{p}
\end{equation} if $1\leq p<\infty$ and 
\begin{equation*}\label{eq:LpNorm2}
\|f\|_\infty=\esssup \abs{f}.
\end{equation*} We use the convention that $\frac{1}{\infty}=0\cdot\infty=0$. Defining $p'=\frac{p}{p-1}$ for $p\in [1, \infty]$, one has $(L^p)'=L^{p'}$.

Classical examples of r.i.q.~spaces, which are not normed nor embedded in $L^1$, are Lebesgue's $L^p$ spaces with $p\in (0, 1)$, whose quasinorm is defined as in $\eqref{eq:LpNorm1}$, or the weak Lebesgue space $L^{1, \infty}$ with a quasinorm defined as 
\begin{equation*}
    \|f\|_{1, \infty}=\sup_{0<t<1} tf^*(t).
\end{equation*}
There is the largest and the smallest r.i.~space. To be precise, it holds true that 
\begin{equation*}\label{eq:SmallestLargestSpace}
L^\infty\emb X\emb L^1
\end{equation*} for every r.i.~space $X$.

One possible generalization of Lebesgue spaces, which we will be particularly interested in, are \emph{Lorentz-Zygmund spaces}. Let $p, q\in [1, \infty]$ and $\beta, \gamma\in\Rea$. The Lorentz-Zygmund spaces $L^{p, q, \beta, \gamma}$ are defined by the functional 
\begin{equation*}\label{eq:LZ}
    \|f\|_{p, q, \beta, \gamma}=\left\|t^{\frac{1}{p}-\frac{1}{q}}\ell_1^\beta(t)\ell_2^\gamma(t) f^*(t)\right\|_q,\quad f \in\mathcal{ M_+}(0, 1).
\end{equation*} Here, $\ell_1(t)=1+\abs{\log(t)}$ and $\ell_2(t)=1+\log\ell_1(t)$. When $\gamma=0$, we write $L^{p,q,\beta}$ and if $\beta=\gamma=0$, we simply write $L^{p, q}$. It is known the functional $\|\cdot\|_{p,q,\beta}$ is equivalent to an r.i.~norm if and only if $p=q=1, \beta\geq0$, or if $p\in (1, \infty), q\in [1, \infty]$, or if $p=\infty, q\in [1, \infty), \beta+\frac{1}{q}<0$, or if $p=q=\infty, \beta\leq 0$. We will, when working with these spaces, assume that the parameters satisfy one of the mentioned conditions.

Let us finally define the function spaces which will be used abundantly throughout the paper.
\begin{defn}
Let $I\colon (0, 1)\to (0, 1)$ be a nondecreasing function. We introduce two functionals defined on $\mathcal M_+(0, 1)$ with values in $[0, \infty]$ by
\begin{align*}
\|f\|_{m_I}&\coloneqq \sup_{0<t<1}I(s)f^*(s),\\
\|f\|_{\Lambda_I}&\coloneqq \int_0^1 \frac{I(s)}{s}f^*(t)\d s.
\end{align*}
We further denote $m_I\coloneqq\{f\in\mathcal M_+(0, 1)\colon \|f\|_{m_I}<\infty\}$. Analogously we define the space $\Lambda_I$. 
\end{defn}

To simplify notation, by the symbol $\widetilde I$ we mean a function $\widetilde I(t)=\frac{t}{I(t)}, t\in (0, 1)$.


\subsection{Properties of isoperimetric functions}
In this subsection we list some properties of functions, which we will use throughout the paper. For the remainder of the subsection fix $I\colon(0, 1)\to (0, 1)$  nondecreasing.

\begin{defn}
 We say that $I$ satisfies $\Delta_2$ condition if $$I(2t)\approx I(t),\quad t\in \left(0, \frac{1}{2}\right),$$ and denote this fact as $I\in\Delta_2$.
\end{defn}

Let us now discuss the average property $\eqref{eq:00002}$. First observe that, as $I$ is nondecreasing, the average property implies that the function $I$ is equivalent to a quasiconcave function. Classical examples of functions satisfying the average property are the polynomials $t\mapsto t^\alpha, t\in (0, 1)$ for $\alpha\in (0, 1)$. Functions which do not possess this property include for example $t\mapsto t$ or $t\mapsto t\sqrt{\log\frac{2}{t}}$ for $t\in (0, 1)$ or any function $I$ such that $\frac{1}{I}$ is not integrable near zero for that matter. 

 We leave the discussion concerning $\eqref{eq:00001}$ to the fifth section where we present examples of functions satisfying this property.

 In the fourth section we will use the following condition
 \begin{equation}\label{eq:BoundedMaximalOp}
     \int_t^1\frac{I(s)}{s^2}\d s\lesssim \frac{1}{t}\int_0^t\frac{I(s)}{s}\d s,\quad t\in (0, 1).
 \end{equation}
Note that if $I$ is additionally quasiconcave and satisfies $\eqref{eq:00001}$, condition $\eqref{eq:BoundedMaximalOp}$ is equivalent to
 \begin{equation*}
    \int_t^1\frac{I(s)}{s^2}\d s\lesssim \frac{I(t)}{t},\quad t\in (0, 1).
\end{equation*} 

 The classical examples of a function satisfying $\eqref{eq:BoundedMaximalOp}$ are $I(t)=t^\alpha, \alpha\in (0, 1)$. Functions which do not satisfy this condition include $I(t)=t\log^\alpha\frac{2}{t}, \alpha\in\left[0, \frac{1}{2}\right]$. The condition~\eqref{eq:BoundedMaximalOp} will become particularly handy in connection with boundedness of the maximal nonincreasing rearrangement on suitable function spaces.

The next definition specifies a certain class of quasiconcave functions that will be of use to us in the second part of the fourth section.
 \begin{defn}[Class $\mathcal Q$]\label{def:Conditions}
     Let $I\colon (0, 1)\to (0, 1)$ be a quasiconcave bijection. Let 
 \begin{equation}\label{eq:Condition1}
     c=\sup\left\{\lambda\geq 0: \lambda\left(\frac{I(t)}{t^2}-1\right)\leq \int_t^1\frac{I(s)}{s^3}\d s\quad\text{for every $t\in (0, 1)$}\right\}.
 \end{equation} 
We say that $I\in \mathcal Q$ if $I$ satisfies $\eqref{eq:00001}$, $\eqref{eq:00002}$, $\eqref{eq:BoundedMaximalOp}$ and $(1-c)d\leq c$,
where $d$ denotes the smallest positive number such that 
 \begin{equation*}
     \int_t^1 \frac{I(s)}{s^2}\d s \leq d \frac{I(t)}{t}\quad\text{for every $t\in (0, 1)$.}
 \end{equation*}   
 \end{defn}
 
 \begin{rem}\label{rem:constC}
     It can be shown that for the constant $c$ from $\eqref{eq:Condition1}$ we have $c\in \left[\frac{1}{2}, 1\right)$. This can be seen by showing that the function $$F(t)=\frac{I(t)}{t^2}-1-\int_t^1\frac{I(s)}{s^3}\d s,\quad t\in (0, 1],$$ is nonincreasing and using the fact that if a quasiconcave function is continuous at zero, then it is absolutely continuous and $I'(t)\leq \frac{I(t)}{t}$ at points at which the derivative exists.
 \end{rem}

 The examples of a functions which belong to class $\mathcal Q$ are $I(t)=t^\alpha$ for $\alpha\in (0, 1)$.


\subsection{Sobolev spaces over r.i.~spaces and isoperimetric function}
Let $\Omega\subset\Rea^n$ be a domain, that is, a connected open set. We equip $\Omega$ with a finite measure $\mu$ which is absolutely continuous with respect to the Lebesgue measure with density $\omega$. More precisely, $${\rm d}\mu(x)=\omega(x){\rm d}x,$$ where $\omega$ is a Borel measurable function satisfying $\omega(x)>0$ for a.e. $x\in\Omega$. Thus, the measure of an arbitrary measurable set $E\subset\Omega$ is given by $$\mu(E)=\int_E \omega(x){\rm d} x.$$ Throughout the paper we will assume, for simplicity, that $\mu$ is normalized in such a way that $\mu(\Omega)=1$. We now recall the definition of the perimeter of a set with respect to our space $(\Omega, \mu)$ and the isoperimetric function.

\begin{defn} Let $E\subset \Rea^n$ be measurable. We define the \emph{perimeter} of $E$ in $(\Omega, \mu)$ by 
$$P_\mu(E, \Omega)=\int_{\Omega\cap \partial^M E}\omega (x)\d \mathcal H^{n-1}(x).$$
\end{defn} Here, $\mathcal H^{n-1}$ stands for the $n-1$ dimensional Hausdorff measure on $\Rea^{n}$ and $\partial ^M E$ denotes the essential boundary of $E$ in the sense of the geometric measure theory \cite{Ma11, Zi}.

\begin{defn}
The \emph{isoperimetric function} of $(\Omega, \mu)$ is a mapping $I_{\Omega, \mu}\colon [0, 1]\to [0, \infty]$ defined by
$$I_{\Omega, \mu}(t)=\inf\left\{P_\mu (E, \Omega)\colon E\subset \Omega, t\leq \mu(E)\leq\frac{1}{2}\right\}\quad\mbox{for }t\in\left[0, \frac{1}{2}\right]$$
and $I_{\Omega, \mu}(t)=I_{\Omega, \mu}(1-t)$ for $t\in (\frac{1}{2}, 1]$.
\end{defn} An easy consequence of this definition is the \emph{isoperimetric inequality} $$I_{\Omega, \mu}(\mu(E))\leq P_\mu(E, \Omega),\quad E\subset \Omega\mbox{ is measurable.}$$

It is evident from the definition that $I_{\Omega, \mu}$ is a nondecreasing function on $[0, \frac{1}{2}]$. Further, by \cite[Proposition 4.1]{CiPiSl}, we know that $I_{\Omega, \mu}(t)\lesssim t^\frac{1}{n'}$ for $t$ sufficiently small. 

Given an r.i.~space $X$, we define $X(\Omega)=X(\Omega, \mu)$ as the collection of all $u\in\mathcal M(\Omega)$ such that $$\|u\|_{X(\Omega)}\coloneqq \|u^*\|_X$$ is finite. The functional $\|\cdot\|_{X(\Omega)}$ defines a norm on $X(\Omega)$. The space $X(\Omega)$ endowed with this norm is also called rearrangement-invariant space, and the space $X$ is called its \emph{representation space}.

The space $X'(\Omega)$ is then defined analogously via $\|\cdot\|_{X'}$.

Throughout the paper we will, for the most part, not distinguish between $X(\Omega)$ and its representation space, as it will be evident whether we work in $X(\Omega)$ or in $X$.

Let $m\in\Nat$ and $X(\Omega, \mu)$ be an r.i.~space. We define the \emph{m-th order Sobolev space} $V^m X(\Omega, \mu)$ as 
\begin{equation*}
\begin{split}
V^m X(\Omega, \mu)=\{u\colon u\ \mbox{is $m$-times weakly differentiable in $\Omega$}\\ \mbox{and } \abs{\nabla^m u}\in X(\Omega, \mu)\}.
\end{split}
\end{equation*}

The results of \cite{CiPiSl} do not require one to work exactly with $I_{\Omega, \mu}$. It suffices to have a lower bound in terms of a nondecreasing function. To be precise, we work with a nondecreasing function $I\colon [0, 1]\to [0, \infty)$ satisfying $I_{\Omega, \mu}(t)\geq cI(ct), t\in [0, \frac{1}{2}]$ for some $c>0$. 
In view of \cite[Proposition 4.2]{CiPiSl}, it is natural to assume that $I(t)\gtrsim t, t\in (0, 1)$, as this guarantees that $V^1 L^1(\Omega)\subset L^1(\Omega)$ and, consequently, that $V^1 X(\Omega)\subset L^1(\Omega)$ for every r.i.~space $X$.

We continue by introducing a pair of integral operators, $R_I$ and $H_I$, on $\mathcal M_+(0, 1)$ which are defined by 
\begin{equation*}
R_If(t)=\frac{1}{I(t)}\int_0^t f(s)\d s,\quad t\in(0, 1),
\end{equation*}
and
\begin{equation*}
H_If(t)=\int_t^1\frac{f(s)}{I(s)}\d s,\quad t\in (0, 1).
\end{equation*} Further, for $m\in\Nat$ we set
\begin{equation*}
R_I^m=\underbrace{R_I\circ \ldots\circ R_I}_{m\text{-times}} \quad\text{and }\quad H_I^m=\underbrace{H_I\circ \ldots\circ H_I}_{m\text{-times}}.
\end{equation*} Using Fubini's theorem we see that $R_I$ and $H_I$ are mutually associate. Hence, $R_I^m$ and $H_I^m$ are also mutually associate for every $m\in\Nat$. 

The operator $G_I$ is then defined by
\begin{equation*}\label{eq:OpGI}
    G_I f(t)=\sup_{t\leq s<1}R_I f^*(s),\quad f\in\mathcal M_+(0, 1), t\in (0, 1).
\end{equation*}

Therefore, for every $f\in\mathcal M_+(0, 1)$, $G_I f$ is a nonincreasing function and $R_If\leq R_I f^*\leq G_I f$ and so $(R_I f)^*\leq G_I f$.

It holds true that 
\begin{equation}\label{eq:RJeGvX}
\left\|G_I f\right\|_X\approx \|R_I f^*\|_X,\quad f\in\mathcal M_+(0, 1),
\end{equation} whenever $X$ is an r.i.~space \cite[Theorem 9.5]{CiPiSl}.

Unless stated otherwise, by $Y_X$ we will mean the optimal target space of $X$ under the mapping $H_I$, and by $X_Y$ we mean the optimal domain space of $Y$ under the mapping $H_I$ (if it exists). By the symbol $Y_X'$ we understand $(Y_X)'$ and the symbol $Y_{X_Z}$ stands for $Y_{(X_Z)}$.

The existence of the optimal target space $Y_X$ is justified in \cite[Proposition 8.3]{CiPiSl}. We will leave the question of the optimal domains to the beginning of the Section~\ref{ch:OS}.


\subsection{Interpolation theory}
\begin{defn}
Let $X_0$ and $X_1$ be quasi-Banach spaces. We say that $(X_0, X_1)$ is a \emph{compatible couple} of quasi-Banach spaces if there exists a Hausdorff topological vector space $H$ such that $X_0\emb H$ and $X_1\emb H$.
\end{defn}

We now recall few results from the interpolation theory that we will need at some proofs. Let us note that these theorems are stated in \cite[Chapter 5]{BS} in the context of Banach spaces. However, it is not hard to see that extending the results together with relevant definitions over quasi-Banach spaces does not create any problems and their proofs would only need minor, if any, modifications. Let us also note that by \cite[Theorem 3.4]{NePe} for every r.i.q.~space $X$ we have $X\emb \mathcal M_0$, where $\mathcal M_0$ is equipped with the (metrizable) topology of convergence in measure on the sets of finite measure. Consequently, any two r.i.q.~spaces form a compatible couple.

\begin{defn}
Let $(X_0, X_1)$ be a compatible couple of quasi-Banach spaces. We define the $K$-functional on $X_0+X_1$ by
$$K(f, t, X_0, X_1)=\inf\{\|g\|_{X_0}+t\|h\|_{X_1}\colon f=g+h, g\in X_0, h\in X_1\}, \quad t\in (0, \infty).$$
\end{defn}

The next theorem will be of use to us, especially when combined with Theorem~\ref{thm:KFunctionalDensity}.
\begin{thm}
Let $(X_0, X_1)$ be a compatible couple of quasi-Banach spaces. Then for every $f\in X_0+X_1$ the map $t\mapsto K(f, t, X_0, X_1)$ is nonnegative, nondecreasing and concave on $(0, \infty)$. Consequently,
\begin{equation}\label{eq:KFunctionalIntegral}
    K(f, t, X_0, X_1)=K(f, 0+, X_0, X_1)+\int_0^t k(f, s, X_0, X_1)\d s,
\end{equation} where $t\mapsto k(f, t, X_0, X_1)$ is the uniquely determined nonincreasing and right-continuous function.
\end{thm}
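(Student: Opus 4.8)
The plan is to establish the three claimed properties of the $K$-functional --- nonnegativity, monotonicity, concavity --- and then deduce the integral representation \eqref{eq:KFunctionalIntegral} from concavity. First I would record nonnegativity, which is immediate from the definition since it is an infimum of sums of nonnegative quantities (quasinorms are nonnegative). For monotonicity in $t$, I would fix $0<t_1\le t_2$ and observe that for any admissible decomposition $f=g+h$ with $g\in X_0$, $h\in X_1$ one has $\|g\|_{X_0}+t_1\|h\|_{X_1}\le \|g\|_{X_0}+t_2\|h\|_{X_1}$; taking the infimum over all such decompositions on both sides yields $K(f,t_1,X_0,X_1)\le K(f,t_2,X_0,X_1)$.

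The heart of the argument is concavity. I would fix $t_0,t_1\in(0,\infty)$ and $\theta\in[0,1]$, set $t_\theta=(1-\theta)t_0+\theta t_1$, and show $K(f,t_\theta)\ge(1-\theta)K(f,t_0)+\theta K(f,t_1)$. The standard trick: given any decomposition $f=g+h$, the quantity $\|g\|_{X_0}+t_\theta\|h\|_{X_1}$ is an affine (hence convex, hence concave) function of $t_\theta$, so it equals $(1-\theta)(\|g\|_{X_0}+t_0\|h\|_{X_1})+\theta(\|g\|_{X_0}+t_1\|h\|_{X_1})\ge(1-\theta)K(f,t_0)+\theta K(f,t_1)$, using the definition of $K$ as an infimum for the last step. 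Taking the infimum over decompositions $f=g+h$ on the left then gives the desired inequality. It is worth noting that this argument uses only that $\|\cdot\|_{X_0}$ and $\|\cdot\|_{X_1}$ are nonnegative functionals, so the passage from the Banach to the quasi-Banach setting indeed requires no modification --- this is exactly the point flagged in the remark preceding the theorem, and I would spell it out briefly since it is the paper's stated reason for re-proving the result here.

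Finally, the representation \eqref{eq:KFunctionalIntegral} follows from a general fact about nonnegative, nondecreasing, concave functions on $(0,\infty)$: such a function $\kappa$ is locally Lipschitz on $(0,\infty)$, hence absolutely continuous on compact subintervals, so $\kappa(t)=\kappa(s)+\int_s^t\kappa'(r)\d r$; letting $s\to0+$ and using that $\kappa$ is nondecreasing and bounded below (so $\kappa(0+)$ exists as a finite limit), one obtains $\kappa(t)=\kappa(0+)+\int_0^t\kappa'(r)\d r$. Concavity forces the derivative $\kappa'$, which exists except at countably many points, to be nonincreasing where defined; one then takes $k(f,\cdot,X_0,X_1)$ to be the right-continuous representative of $\kappa'$, which is uniquely determined among nonincreasing right-continuous functions agreeing with $\kappa'$ a.e. I would cite \cite[Chapter 5]{BS} for the precise packaging rather than reproving the elementary real-analysis facts about concave functions.

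I do not expect a serious obstacle here; the only point requiring a word of care is confirming that $K(f,0+,X_0,X_1)$ is a finite quantity (it is, since $0\le K(f,t,X_0,X_1)\le\|f\|_{X_0+X_1}$ is bounded near $0$ once $f\in X_0+X_1$, though one should note $K(f,0+,\cdot,\cdot)$ may be positive in general) and that the quasi-triangle constants of $X_0$ and $X_1$ play no role in any of the three properties --- only the positivity and finiteness of the functionals matter, which is why the proof transfers verbatim.
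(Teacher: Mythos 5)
Your proof is correct and is exactly the standard argument: $K(f,\cdot)$ is an infimum of nonnegative affine functions of $t$, hence nonnegative, nondecreasing and concave, and the integral representation with the nonincreasing right-continuous derivative $k(f,\cdot)$ follows from the elementary facts about concave functions. The paper itself offers no proof, merely deferring to \cite[Chapter 5]{BS} with the remark that the quasi-Banach setting needs no changes — which is precisely the point you verify, since only nonnegativity of the functionals enters.
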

Following characterization states when the first term of the righthand side of $\eqref{eq:KFunctionalIntegral}$ can be omitted.  Note that since the spaces involved need not be normed, one should be familiar with a \emph{generalised Riesz–Fischer theorem} \cite[Theorem 3.3]{NePe}.
\begin{thm}\label{thm:KFunctionalDensity}
    Let $(X_0, X_1)$ be a compatible couple of quasi-Banach spaces. Then
    $$K(f, 0+, X_0, X_1)=0,\quad f\in X_0+X_1$$ if and only if $X_0\cap X_1$ is dense in $X_0$.
\end{thm}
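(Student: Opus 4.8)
The plan is to read off both implications directly from the definition of the $K$-functional, using only the monotonicity recorded in the preceding theorem, namely that $t\mapsto K(f,t,X_0,X_1)$ is nondecreasing; consequently
\[
K(f,0+,X_0,X_1)=\lim_{t\to 0+}K(f,t,X_0,X_1)=\inf_{t>0}K(f,t,X_0,X_1),
\]
so the condition $K(f,0+,X_0,X_1)=0$ just says that the decompositions $f=g+h$ can be chosen with $\|g\|_{X_0}$ arbitrarily small, at the cost of letting $\|h\|_{X_1}$ blow up.

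First I would prove the implication that assumes $X_0\cap X_1$ is dense in $X_0$. Fix $f\in X_0+X_1$ and pick a decomposition $f=a+b$ with $a\in X_0$ and $b\in X_1$. Given $\varepsilon>0$, density furnishes $u\in X_0\cap X_1$ with $\|a-u\|_{X_0}<\varepsilon$; since $X_0$ and $X_1$ are linear spaces, $f=(a-u)+(u+b)$ is an admissible decomposition, whence $K(f,t,X_0,X_1)\le \varepsilon+t\,\|u+b\|_{X_1}$ for every $t>0$, the quantity $\|u+b\|_{X_1}$ being finite by the quasi-triangle inequality in $X_1$. Sending $t\to 0+$ gives $K(f,0+,X_0,X_1)\le\varepsilon$, and as $\varepsilon>0$ is arbitrary, $K(f,0+,X_0,X_1)=0$.

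Next I would prove the converse. Let $g\in X_0$; then $g=g+0\in X_0+X_1$, so by hypothesis $K(g,0+,X_0,X_1)=0$. Given $\varepsilon>0$, the displayed identity yields $t_0>0$ with $K(g,t_0,X_0,X_1)<\varepsilon$, hence a decomposition $g=g_0+g_1$ with $g_0\in X_0$, $g_1\in X_1$ and $\|g_0\|_{X_0}+t_0\|g_1\|_{X_1}<\varepsilon$; in particular $\|g_0\|_{X_0}<\varepsilon$. But $g_1=g-g_0$ lies in $X_0$ because both $g$ and $g_0$ do, while $g_1\in X_1$ by construction, so $g_1\in X_0\cap X_1$, and $\|g-g_1\|_{X_0}=\|g_0\|_{X_0}<\varepsilon$. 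Thus every $g\in X_0$ is approximable in $X_0$ by elements of $X_0\cap X_1$.

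I do not foresee a genuine obstacle here; the points needing a little care are the interchange of the limit $t\to 0+$ with the infimum defining $K(f,0+,X_0,X_1)$, which is licensed by the monotonicity of $K$, and the finiteness of the $X_1$-quasinorm of the auxiliary element $u+b$, which is where the quasi-Banach rather than Banach setting makes itself felt and is handled by a single use of the quasi-triangle inequality. I do not expect to need the generalised Riesz–Fischer theorem for the equivalence itself; it is relevant only to the ambient facts about the couple $(X_0,X_1)$ and to the preceding concavity statement for $K$.
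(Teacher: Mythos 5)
Your proof is correct; both directions are the standard argument, and the only quasi-Banach subtlety (finiteness of $\|u+b\|_{X_1}$ via the quasi-triangle inequality) is handled properly. The paper does not spell out a proof but recalls the result from the interpolation-theory literature, and your argument is essentially that standard one transplanted verbatim to the quasi-Banach setting, so there is nothing to add.
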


\begin{thm}\label{thm:KInequality}
Let $(X_0, X_1)$ and $(Y_0, Y_1)$ be two compatible couples of quasi-Banach spaces. Let $T$ be a sublinear operator such that $$T\colon X_0\to Y_0\quad\mbox{and}\quad T\colon X_1\to Y_1.$$ Then there is $c>0$ such that 
\begin{equation*}\label{eq:KInequality}
K(Tf, t, Y_0, Y_1)\lesssim K(f, ct, X_0, X_1),\quad f\in X_0+X_1, t>0.
\end{equation*}
\end{thm}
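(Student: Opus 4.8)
The plan is to reduce the $K$-functional inequality to the two boundedness hypotheses by a direct decomposition argument, exactly as in the classical Banach-space proof (\cite[Chapter~5]{BS}), keeping track of the quasi-norm constants. Fix $f \in X_0+X_1$ and $t>0$. Given any $\eps>0$, choose a decomposition $f=g+h$ with $g\in X_0$, $h\in X_1$ and
$$
\|g\|_{X_0} + t\|h\|_{X_1} \leq K(f, t, X_0, X_1) + \eps .
$$
Since $T$ is sublinear we have, pointwise a.e., $|Tf| \leq C_Y\big(|Tg| + |Th|\big)$ where $C_Y\geq 1$ is the quasi-norm constant of $Y_0+Y_1$ coming from (Q1) (when $Y_0$, $Y_1$ are genuine norms one may take $C_Y=1$). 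Hence $Tf = C_Y Tg + (Tf - C_Y Tg)$ is a legitimate competitor in the definition of $K(Tf, s, Y_0, Y_1)$, but it is cleaner to use the quasi-subadditivity of the $K$-functional itself: from the definition one checks directly that $K(u+v, s, Y_0, Y_1) \leq C_Y\big(K(u,s,Y_0,Y_1)+K(v,s,Y_0,Y_1)\big)$, and more simply that $K(Tf,s,\cdot)\leq C_Y\big(K(Tg,s,\cdot)+K(Th,s,\cdot)\big)$ when $|Tf|\le C_Y(|Tg|+|Th|)$ and $K$ is monotone in its first argument with respect to the a.e.\ order on the intermediate Hausdorff space $H$.

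Next I would estimate each piece. Since $T\colon X_0\to Y_0$ we have $\|Tg\|_{Y_0}\lesssim \|g\|_{X_0}$, so using $g = g + 0$ as a decomposition, $K(Tg, s, Y_0, Y_1) \leq \|Tg\|_{Y_0} \lesssim \|g\|_{X_0}$. Similarly, since $T\colon X_1\to Y_1$, using $h = 0 + h$ we get $K(Th, s, Y_0, Y_1) \leq s\|Th\|_{Y_1} \lesssim s\|h\|_{X_1}$. Combining,
$$
K(Tf, s, Y_0, Y_1) \lesssim \|g\|_{X_0} + s\|h\|_{X_1}.
$$
Now one wants the right-hand side to look like $K(f, cs, X_0, X_1)$. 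Choosing $s$ and the constant $c$ appropriately — specifically, evaluating the original decomposition bound at the scale $cs$ rather than $t$ — we obtain, taking the infimum over all decompositions of $f$ at scale $cs$,
$$
K(Tf, s, Y_0, Y_1) \lesssim \|g\|_{X_0} + s\|h\|_{X_1} \leq \|g\|_{X_0} + \tfrac{1}{c}\cdot cs\,\|h\|_{X_1} \lesssim K(f, cs, X_0, X_1) + \eps,
$$
where $c$ is chosen so that the implicit multiplicative constants from $\|Tg\|_{Y_0}\lesssim\|g\|_{X_0}$ and $\|Th\|_{Y_1}\lesssim\|h\|_{X_1}$ are absorbed; letting $\eps\to 0$ finishes the proof.

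The only genuine subtlety — and the step I expect to require the most care — is the bookkeeping of the quasi-norm constants: in the quasi-Banach setting one must make sure that the constant $C_Y$ from sublinearity of $T$ together with (Q1), and the constants hidden in the two boundedness hypotheses, combine into a single constant independent of $f$ and $t$, and that the scale-change constant $c$ can be chosen once and for all. This is where one invokes that $(X_0,X_1)$ and $(Y_0,Y_1)$ are compatible couples (so that $K$ is well-defined and monotone in the first slot via the common space $H$, using \cite[Theorem~3.4]{NePe} that every r.i.q.~space embeds continuously into $\mathcal M_0$), and that quasi-norms satisfy a fixed triangle-type inequality rather than the genuine one. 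Beyond that, the argument is the routine transfer of the classical proof, with no new ideas needed; the ``minor modifications'' alluded to in the text are precisely this constant-tracking.
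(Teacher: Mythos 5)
Your argument is correct and is essentially the proof the paper intends: the paper gives no independent argument but defers to the classical Banach-space proof in \cite[Chapter 5]{BS}, which is exactly your scheme — take a near-optimal decomposition $f=g+h$ at the rescaled parameter, use sublinearity plus the lattice property of $Y_0,Y_1$ to dominate $K(Tf,t,Y_0,Y_1)$ by $\|Tg\|_{Y_0}+t\|Th\|_{Y_1}$, apply the two boundedness hypotheses, and absorb the operator norms into the constant and the scale change $c$. Only cosmetic points: the pointwise bound $|Tf|\leq|Tg|+|Th|$ comes from sublinearity alone (no quasi-norm constant $C_Y$ is needed there), and the final chain is cleanest if you fix $c$ as the ratio of the two operator norms and choose the decomposition near-optimal at scale $ct$ from the start, as you yourself indicate.
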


In many theorems, we will use a certain elementary decomposition of $f$, to which we will refer as the \emph{optimal decomposition}.
\begin{defn}[optimal decomposition]
Let $f\in\mathcal M(0, 1)$ and $t\in (0, 1)$ be given. We define the optimal decomposition of $f$ at point $t$ by
$$f_0(s)=\min\{\abs{f(s)}, f^*(t)\}\sgn f(s),$$
and 
$$f_1(s)=\max\{\abs{f(s)}-f^*(t), 0\}\sgn f(s).$$ 
Then $f=f_0+f_1$ and it further satisfies
\begin{equation}\label{E:f_0-f_1}
    \begin{split}
        &f_0^*(s)=\min\{f^*(s), f^*(t)\},
            \\
        &f^*_1(s)=(f^*(s)-f^*(t))\chi_{(0,t)}(s),
    \end{split}
\end{equation}
and $f^*=f_0^*+f_1^*$.
\end{defn}

\begin{defn}
Let $X_0$, $X_1$ and $X$ be quasi-Banach spaces which all embed to a Hausdorff topological vector space $\mathcal H$ and satisfy $X_0\subset X \subset X_1$. We say that X is an \emph{interpolation space} between $X_0$ and $X_1$, the fact being denoted $X\in \INT(X_0, X_1)$, if for any linear operator $T$ the following holds: $$T\colon X_0\to X_0\quad \mbox{and} \quad T\colon X_1\to X_1\quad \implies\quad T\colon X\to X.$$
\end{defn}

The next theorem appears to be indispensable in the proof of Theorem~\ref{thm:ZOptimal} and follows from \cite[Chapter 5, Theorem 1.19]{BS}.
\begin{thm}\label{thm:SuperInterpolation}
    Let $(X_0, X_1)$ and $(Y_0, Y_1)$ be two compatible couples of quasi-Banach spaces and $\lambda$ be an r.i.~norm. Suppose $X_0\cap X_1$ is dense in $X_0$ and that $Y_0\cap Y_1$ is dense in $Y_0$. Set $\alpha(f)=\lambda(k(f, t, X_0, X_1))$ and $\beta(f)=\lambda(k(f, t, Y_0, Y_1))$ for $f\in\mathcal M_+(0, 1)$. Then for any linear operator $T$ satisfying $$T\colon X_0\to Y_0\quad\mbox{and}\quad T\colon X_1\to Y_1,$$ we have $$\beta(Tf)\lesssim \alpha(f),\quad f\in \mathcal M_+(0, 1).$$
\end{thm}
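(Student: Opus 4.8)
The plan is to deduce Theorem~\ref{thm:SuperInterpolation} from the classical interpolation theorem \cite[Chapter~5, Theorem~1.19]{BS}, which states that a space whose norm is obtained by applying an r.i.~norm to the function $t\mapsto k(f,t,X_0,X_1)$ is an exact interpolation space for the couple $(X_0,X_1)$, and to pass from the couple $(X_0,X_1)$ to a different couple $(Y_0,Y_1)$ by the standard trick of working in the \emph{larger} couple $(X_0+Y_0,\, X_1+Y_1)$ (or rather by reducing to the endpoint-identical situation via a retraction). The only genuine extra ingredient beyond \cite{BS} is that the couples here are merely quasi-Banach rather than Banach; as the excerpt already remarks, the relevant definitions and proofs carry over with at most cosmetic changes, and one may also invoke Theorem~\ref{thm:KInequality} directly.

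First I would reduce to the case $X_0=Y_0$, $X_1=Y_1$. Set $Z_0=X_0+Y_0$ and $Z_1=X_1+Y_1$ (a compatible couple, since all spaces embed into a common Hausdorff topological vector space by the remark preceding the $K$-functional definition). The identity maps give bounded inclusions $X_i\emb Z_i$ and $Y_i\emb Z_i$. By the Holmstedt-type estimate — or, more elementarily, directly from the definition of the $K$-functional — one has $K(f,t,Z_0,Z_1)\le K(f,t,X_0,X_1)$ and likewise with $Y$ in place of $X$, so $k(f,\cdot,Z_0,Z_1)\le k(f,\cdot,X_0,X_1)$ pointwise (the derivatives of concave functions agreeing at $0$ where both vanish by density), and hence $\lambda(k(f,t,Z_0,Z_1))\lesssim\alpha(f)$. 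Conversely, because $T\colon X_i\to Y_i\emb Z_i$, Theorem~\ref{thm:KInequality} applied to the couples $(X_0,X_1)$ and $(Z_0,Z_1)$ gives $K(Tf,t,Z_0,Z_1)\lesssim K(f,ct,X_0,X_1)$. Differentiating, using that the dilation operator $E_c$ is bounded on every r.i.~space (stated in the excerpt) and that $k(\cdot,c\,\cdot)$ corresponds under the integral relation \eqref{eq:KFunctionalIntegral} to an averaged dilation of $k$, one obtains $\lambda(k(Tf,t,Z_0,Z_1))\lesssim\lambda(k(f,ct,X_0,X_1))\lesssim\alpha(f)$. It therefore remains to bound $\beta(Tf)=\lambda(k(Tf,t,Y_0,Y_1))$ by $\lambda(k(Tf,t,Z_0,Z_1))$, i.e.\ to compare the $K$-functional of the \emph{target} couple with that of the enveloping couple $(Z_0,Z_1)$, which is exactly the endpoint-identical situation applied to $Tf$.

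So the heart of the matter is the following statement, which is \cite[Chapter~5, Theorem~1.19]{BS} transcribed to quasi-Banach couples: if $g$ lies in the space $Z$ with $\|g\|_Z=\lambda(k(g,\cdot,Z_0,Z_1))$, then $g\in Y$ with $\|g\|_Y:=\lambda(k(g,\cdot,Y_0,Y_1))\lesssim\|g\|_Z$, provided $Y_i\emb Z_i$ and the density hypotheses hold. To prove this, decompose $g$ near each scale $t$ using a near-optimal decomposition for the couple $(Z_0,Z_1)$ — $g=u_t+v_t$ with $\|u_t\|_{Z_0}+t\|v_t\|_{Z_1}\lesssim K(g,t,Z_0,Z_1)$ — and observe that since $Y_i\emb Z_i$, the relevant pieces also estimate $K(g,t,Y_0,Y_1)$ once one knows $u_t\in Y_0$ (which follows from density of $Y_0\cap Y_1$ in $Y_0$ combined with a standard limiting argument, or directly from the fundamental lemma of interpolation). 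The upshot is the pointwise inequality $k(g,t,Y_0,Y_1)\lesssim k(g,t,Z_0,Z_1)$, up to a bounded dilation and after applying the boundedness of the Hardy averaging operator on the r.i.~space associated to $\lambda$ (here the hypothesis that $\lambda$ is an honest r.i.~\emph{norm}, not just a quasinorm, is used, so that \eqref{eq:HLP} and the boundedness of $f\mapsto f^{**}$ are available). Applying $\lambda$ and using (P2) finishes it.

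The \textbf{main obstacle} I anticipate is twofold. The first is bookkeeping the passage from the integral relation \eqref{eq:KFunctionalIntegral} to pointwise control of the derivatives $k(f,\cdot)$: inequalities between concave functions that vanish at $0$ transfer to their right derivatives only after the density hypotheses guarantee $K(\cdot,0+)=0$ (Theorem~\ref{thm:KFunctionalDensity}), so one must be careful to invoke exactly those hypotheses, and the dilation $t\mapsto ct$ coming from Theorem~\ref{thm:KInequality} must be absorbed using that $E_c$ — equivalently the scaled Hardy operator — is bounded on r.i.~spaces. The second is the quasi-Banach subtlety: the triangle inequality in $Z_0+Z_1$ is only a quasi-triangle inequality, so the "fundamental lemma of interpolation" (producing a single decomposition good at all dyadic scales simultaneously) must be re-derived with a quasinorm constant, which is where one leans on \cite[Theorem~3.3]{NePe} (the generalised Riesz–Fischer theorem) and on the completeness of r.i.q.~spaces cited from \cite{NePe}. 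Once these two points are handled, the rest is the routine chaining of the three inequalities $\beta(Tf)\lesssim\lambda(k(Tf,\cdot,Z_0,Z_1))\lesssim\lambda(k(f,c\,\cdot,X_0,X_1))\lesssim\alpha(f)$.
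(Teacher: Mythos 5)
Your reduction to the enveloping couple $(Z_0,Z_1)=(X_0+Y_0,\,X_1+Y_1)$ contains a genuine gap at exactly the step you call the heart of the matter. The comparison you need there is $K(Tf,t,Y_0,Y_1)\lesssim K(Tf,t,Z_0,Z_1)$, i.e.\ that the couple $(Y_0,Y_1)$ is a $K$-subcouple of (is $K$-closed in) the larger couple $(Z_0,Z_1)$. The trivial direction is the opposite one: since $Y_i\emb Z_i$ one has $\|\cdot\|_{Z_i}\lesssim\|\cdot\|_{Y_i}$, hence \emph{more} admissible decompositions and \emph{smaller} norms in the $Z$-couple, so $K(\cdot,t,Z_0,Z_1)\lesssim K(\cdot,t,Y_0,Y_1)$, and the inequality you need does not follow from the hypotheses and is false for general subcouples. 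Your sketched justification inverts the embedding: a near-optimal decomposition $g=u_t+v_t$ for $(Z_0,Z_1)$ has $u_t\in X_0+Y_0$ and $v_t\in X_1+Y_1$, so the pieces need not lie in $Y_0$, $Y_1$ at all, and even when they do, the bounds $\|u_t\|_{Z_0}+t\|v_t\|_{Z_1}\lesssim K(g,t,Z_0,Z_1)$ give no control of $\|u_t\|_{Y_0}+t\|v_t\|_{Y_1}$; density of $Y_0\cap Y_1$ in $Y_0$ does not repair this. (A smaller slip of the same kind occurs earlier: an inequality between the concave functions $K(f,\cdot,Z_0,Z_1)\le K(f,\cdot,X_0,X_1)$ does not yield a pointwise inequality of their derivatives $k$, only integral domination; that part is harmless because the HLP principle is all one needs, but it should be stated that way.)

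The detour through $(Z_0,Z_1)$ is also unnecessary, and removing it gives the intended proof (the paper simply cites \cite[Chapter~5, Theorem~1.19]{BS}, with the quasi-Banach extension declared routine). Apply Theorem~\ref{thm:KInequality} directly to $T$ between the couples $(X_0,X_1)$ and $(Y_0,Y_1)$ — its hypotheses are exactly $T\colon X_0\to Y_0$ and $T\colon X_1\to Y_1$ — to get $K(Tf,t,Y_0,Y_1)\lesssim K(f,ct,X_0,X_1)$. By the density hypotheses and Theorem~\ref{thm:KFunctionalDensity}, both $K(\cdot,0+)$ terms vanish, so by~\eqref{eq:KFunctionalIntegral}
\begin{equation*}
\int_0^t k(Tf,s,Y_0,Y_1)\d s\;\lesssim\;\int_0^{ct} k(f,s,X_0,X_1)\d s\;=\;c\int_0^{t} k(f,cs,X_0,X_1)\d s .
\end{equation*}
Since both $k$'s are nonincreasing, this is a domination of the form $g^{**}\lesssim h^{**}$ with $h(s)=k(f,cs,X_0,X_1)$, so the HLP principle~\eqref{eq:HLP} (available because $\lambda$ is an r.i.~\emph{norm}) gives $\beta(Tf)=\lambda(k(Tf,\cdot,Y_0,Y_1))\lesssim\lambda(k(f,c\,\cdot,X_0,X_1))$, and the boundedness of the dilation operator on r.i.~spaces absorbs the constant $c$, yielding $\beta(Tf)\lesssim\alpha(f)$. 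This two-couple chaining is the whole argument; no $K$-closedness of subcouples is needed anywhere.
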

\begin{rem}
    It is important that the functional $\lambda$ in the theorem above is an r.i.~norm, so that we have the HLP principle at our disposal.
\end{rem}


\section{Operators involving suprema}\label{supr}

In this section we introduce two operators involving suprema, $S_I$ and $T_I$, and explore their boundedness and interpolation properties. These  operators will be defined in terms of a nondecreasing function $I\colon (0, 1)\to (0, 1)$. Most theorems concerning operator $S_I$ in this section will work in a very general setting, essentially requiring only $\Delta_2$ condition. This condition guarantees us that the spaces we will work with are at least quasinormed, as the following proposition suggests.

\begin{prop}\label{prop:fact}
Let $I\colon (0, 1)\to (0, 1)$ be a nondecreasing function. Then $m_I$ is an~r.i.q. space if and only if $I\in\Delta_2$.
\end{prop}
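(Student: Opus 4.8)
The plan is to establish the two directions separately. For the easy direction, suppose $I \in \Delta_2$; I will verify that $\|\cdot\|_{m_I}$ satisfies the conditions (Q1), (P2), (P3), (P4), (P6) required of an r.i.q.~norm. Properties (P2), (P3) and (P6) are immediate from the corresponding monotonicity and rearrangement properties of $f \mapsto f^*$, and (P4) holds since $\|\chi_{(0,1)}\|_{m_I} = \sup_{0<s<1} I(s) = \lim_{s\to 1^-} I(s) \le 1 < \infty$ because $I$ maps into $(0,1)$ and is nondecreasing. The one point that genuinely uses $\Delta_2$ is the quasi-triangle inequality in (Q1): starting from the subadditivity-type estimate $(f+g)^*(2s) \le f^*(s) + g^*(s)$ (a consequence of~\eqref{eq:Almostsubadditivity}), one multiplies by $I(2s)$, uses $I(2s) \lesssim I(s)$ and the monotonicity $I(2s) \ge I(s)$, and takes suprema to get $\|f+g\|_{m_I} \lesssim \|f\|_{m_I} + \|g\|_{m_I}$; the scaling and the positivity axiom $\rho(f)=0 \iff f = 0$ a.e.\ are routine. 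Hence $m_I$ is an r.i.q.~space.

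For the converse, assume $I \notin \Delta_2$ and produce a counterexample to the quasi-triangle inequality, thereby showing $m_I$ is not an r.i.q.~space. Since $I$ is nondecreasing and $I(2t) \ge I(t)$ always, the failure of $\Delta_2$ means there is a sequence $t_k \downarrow 0$ with $I(t_k)/I(2t_k) \to 0$. The idea is to build, for each $k$, two nonincreasing functions (indicators or simple functions) supported near $t_k$ and $2t_k$ whose $m_I$-norms are comparable to $1$ but whose sum has $m_I$-norm growing like $I(2t_k)/I(t_k) \to \infty$. Concretely, one can take $f_k = (1/I(t_k)) \chi_{(0, t_k)}$ and $g_k$ a suitable shifted/rescaled bump so that $(f_k + g_k)^*$ is at least of order $1/I(t_k)$ on an interval of length about $2t_k$, forcing $\|f_k+g_k\|_{m_I} \gtrsim I(2t_k)/I(t_k)$ while $\|f_k\|_{m_I}, \|g_k\|_{m_I} \lesssim 1$. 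Since no single constant $C$ can dominate $I(2t_k)/I(t_k)$ for all $k$, axiom (Q1) fails.

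I expect the main obstacle to be the converse direction: one must choose the test functions carefully so that the rearrangement of the \emph{sum} does not lose the concentration that makes the norm blow up — because passing to $(f+g)^*$ can a priori spread mass out — and so that simultaneously each summand keeps its $m_I$-norm bounded. The cleanest route is probably to work with functions whose supports are essentially disjoint and arranged so that $(f_k+g_k)^*$ coincides with a single rescaled indicator of length $\approx 2t_k$ and height $\approx 1/I(t_k)$; then the computation of all three norms is transparent and the estimate $\|f_k+g_k\|_{m_I} \ge I(2t_k) \cdot (1/I(t_k))$ is exact. A secondary, purely bookkeeping point is to confirm that in the "only if" direction the \emph{other} axioms never fail regardless of $\Delta_2$, so that the obstruction is genuinely and only the quasi-triangle inequality; this follows from the remarks above about (P2)--(P6) and the scaling part of (Q1).
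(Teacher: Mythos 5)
Your proposal is correct and follows essentially the same route as the paper: the quasi-triangle inequality in the ``if'' direction via $(f+g)^*(t)\leq f^*(t/2)+g^*(t/2)$ together with $\Delta_2$, and the ``only if'' direction by testing the quasinorm on two adjacent indicator-type functions whose sum rearranges to $\chi_{(0,2t)}$ (the paper uses $f=\chi_{(0,t)}$, $g=\chi_{(t,2t)}$, which is your construction up to the harmless normalization $1/I(t_k)$, argued directly rather than by contradiction along a sequence). The obstacle you flag about the rearrangement of the sum spreading mass is resolved exactly as you suggest, by disjoint adjacent supports.
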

\begin{proof}
“$\Rightarrow$” Let $t<\frac{1}{2}$ and put $f=\chi_{(0, t)}$ and $g=\chi_{(t, 2t)}$. Then $g^*=f^*=f$ and $(f+g)^*=\chi_{(0, 2t)}$. As $\|\cdot\|_{m_I}$ is a quasinorm, we have
$$I(2t)=\|f+g\|_{m_I}\lesssim \|f\|_{m_I}+\|g\|_{m_I}\approx I(t).$$

$\Leftarrow$ We check axioms of r.i.q.~norms. From axiom (Q1), only the triangle inequality requires some comment. To this end, let $f, g \in M_+(0, 1)$ be given. Then
\begin{align*}
\|f+g\|_{m_I}\leq \sup_{t\in (0, 1)}I(t)\left(f^*\left(\frac{t}{2}\right)+g^*\left(\frac{t}
  {2}\right)\right)\lesssim \sup_{t\in (0, 1)}I\left(\frac{t}{2}\right)f^*\left(\frac{t}{2}\right)+\sup_{t\in (0, 1)}I\left(\frac{t}{2}\right)g^*\left(\frac{t}{2}\right)\leq \|f\|_{m_I}+\|g\|_{m_I}.
\end{align*}
If $0\leq f\leq g$, then $f^*\leq g^*$ and so $\|f\|_{m_I}\leq \|g\|_{m_I}$.
The fact that $I$ is bounded implies $\chi_{(0, 1)}\in m_I$. 
Finally, if $0\leq f_n\nearrow f$, then $f_n^*\nearrow f^*$ and so $\|f_n\|_{m_I}\nearrow \|f\|_{m_I}$.
\end{proof}

Regarding the operator $T_I$, we will consider an average-type condition $\eqref{eq:00001}$, which, combined with quasiconcavity of $I$, in fact implies boundedness of said operator on $L^1$. Moreover, the assumption of quasiconcavity allows us to work around the supremum appearing in its definition.

For the remainder of the paper, whenever we mention a quasiconcave function $I$, we implicitly assume that $I\colon (0, 1)\to (0, 1)$ is a bijection with $I(0+)=0$ and $I(1-)=1$. Note that this implies that $I(t)\geq t, t\in (0, 1)$.

We proceed by defining two supremum operators.

\begin{defn}\label{def:SIaTI}
Let $I$ be a nondecreasing function. We define supremum operators $S_I$ and $T_I$ on $\mathcal M_+(0, 1)$ by 
$$(S_If)(t)\coloneqq \frac{1}{I(t)}\sup_{0<s\leq t}I(s)f^*(s),\quad f\in\mathcal M_+(0, 1), t\in (0, 1),$$
and
$$(T_If)(t)\coloneqq \frac{I(t)}{t}\sup_{t\leq s<1}\frac{s}{I(s)}f^*(s),\quad f\in\mathcal M_+(0, 1), t\in (0, 1).$$
\end{defn}

Observe that $f^*\leq T_If$ and $f^*\leq S_If$ for every $f\in\mathcal M_+(0, 1)$. We also see that both of these operators are monotone -- if $f, g\in\mathcal M_+(0, 1)$ are such that $f\leq g$, then $S_I f\leq S_I g$ and $T_I f\leq T_Ig$. 

If we assume that a function $I$ is quasiconcave, $t\mapsto T_If(t)$ is a nonincreasing function for every $f\in\mathcal M_+(0, 1)$.

Using~$\eqref{eq:Almostsubadditivity}$ and assuming $I$ to satisfy $\Delta_2$ condition, we see that 
\begin{equation}\label{eq:SIsubadditivity}
(S_I(f+g))(t)\lesssim (S_If)\left(\frac{t}{2}\right)+(S_Ig)\left(\frac{t}{2}\right),\quad f\in\mathcal M_+(0, 1), t\in (0, 1),
\end{equation}
and
\begin{equation*}
(T_I(f+g))(t)\lesssim (T_If)\left(\frac{t}{2}\right)+(T_Ig)\left(\frac{t}{2}\right),\quad f\in\mathcal M_+(0, 1), t\in (0, 1).
\end{equation*}

Before we begin exploring the mapping properties of the operator $S_I$, let us observe that $t\mapsto S_If (t)$ is a nonincreasing function for every $f\in\mathcal M_+(0, 1)$.

\begin{lemma}\label{lem:SMonotone}
Let $I$ be a nondecreasing function and $f\in\mathcal M_+(0, 1)$. Then $S_I f$ is a nonincreasing function on $(0, 1)$.
\end{lemma}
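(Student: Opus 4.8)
The statement asserts that $S_I f$ is nonincreasing on $(0,1)$ for every nondecreasing $I$ and every $f \in \mathcal M_+(0,1)$. The plan is to show directly, for $0 < t_1 < t_2 < 1$, that $(S_I f)(t_1) \geq (S_I f)(t_2)$. Write out the definitions: $(S_I f)(t_i) = \frac{1}{I(t_i)} \sup_{0 < s \leq t_i} I(s) f^*(s)$. Denote $M(t) = \sup_{0<s\leq t} I(s)f^*(s)$, so that $S_I f(t) = M(t)/I(t)$. The quantity $M$ is clearly nondecreasing in $t$, since the supremum is taken over a larger set, but $I$ is also nondecreasing, so the ratio is an indeterminate competition — this is exactly the point that needs an argument rather than a one-line monotonicity remark.

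\textbf{Key step.} The trick I would use is to bound the numerator $M(t_2)$ in terms of $M(t_1)$. Split the supremum defining $M(t_2)$ according to whether $s \leq t_1$ or $t_1 < s \leq t_2$:
\begin{equation*}
M(t_2) = \max\left\{ \sup_{0 < s \leq t_1} I(s) f^*(s), \ \sup_{t_1 < s \leq t_2} I(s) f^*(s) \right\} = \max\{ M(t_1), N \},
\end{equation*}
where $N = \sup_{t_1 < s \leq t_2} I(s) f^*(s)$. Now for $t_1 < s \leq t_2$ we have $f^*(s) \leq f^*(t_1)$ by monotonicity of the nonincreasing rearrangement, and $I(s) \leq I(t_2)$, hence $N \leq I(t_2) f^*(t_1)$. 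On the other hand, taking $s = t_1$ in the supremum defining $M(t_1)$ gives $M(t_1) \geq I(t_1) f^*(t_1)$. Therefore
\begin{equation*}
\frac{N}{I(t_2)} \leq f^*(t_1) \leq \frac{M(t_1)}{I(t_1)},
\end{equation*}
so $\frac{N}{I(t_2)} \leq \frac{M(t_1)}{I(t_1)}$, and trivially $\frac{M(t_1)}{I(t_2)} \leq \frac{M(t_1)}{I(t_1)}$ since $I(t_1) \leq I(t_2)$. Combining,
\begin{equation*}
(S_I f)(t_2) = \frac{M(t_2)}{I(t_2)} = \frac{\max\{M(t_1), N\}}{I(t_2)} = \max\left\{ \frac{M(t_1)}{I(t_2)}, \frac{N}{I(t_2)} \right\} \leq \frac{M(t_1)}{I(t_1)} = (S_I f)(t_1),
\end{equation*}
which is the claim.

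\textbf{Main obstacle.} There is no deep obstacle here; the only subtlety is recognizing that one cannot argue by pure monotonicity of numerator and denominator separately, and must instead exploit the interaction — namely that the ``new'' part of the supremum $N$ is controlled by $f^*(t_1)$, which in turn is controlled by the already-present term $I(t_1)f^*(t_1)$ in $M(t_1)$, up to the factor $I(t_1)$ which cancels correctly against $I(t_2)$. Minor care is needed if one works with $\sup$ attained or not and with the case $f^* \equiv 0$, but these are handled automatically by the inequality chain above (if $M(t_1) = 0$ then $f^*(t_1) = 0$ forces $N = 0$ as well). One could alternatively phrase this via the optimal-decomposition or via $f^*(s) \leq f^*(t_1)$ for $s \geq t_1$ more abstractly, but the direct splitting of the supremum is the cleanest route.
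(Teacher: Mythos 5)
Your proof is correct. It rests on the same elementary facts as the paper's argument, but the route is cleaner: you split the supremum directly as $M(t_2)=\max\{M(t_1),N\}$ with $N=\sup_{t_1<s\leq t_2}I(s)f^*(s)$, bound $N\leq I(t_2)f^*(t_1)$ using monotonicity of $f^*$ and $I$, and close the loop with $I(t_1)f^*(t_1)\leq M(t_1)$ together with $I(t_1)\leq I(t_2)$. The paper instead argues by cases on whether $M(t_1)=M(t_2)$ or $M(t_1)<M(t_2)$; in the second case it introduces the truncated auxiliary function $f_1=\min\{f^*,f^*(t_1)\}$ (constant equal to $f^*(t_1)$ beyond $t_1$), observes that the supremum defining $M(t_2)$ is then realized over $(t_1,t_2]$, and computes $(S_If)(t_2)\leq (S_If_1)(t_2)=f^*(t_1)\leq (S_If)(t_1)$ --- which is exactly your chain $N/I(t_2)\leq f^*(t_1)\leq M(t_1)/I(t_1)$ in disguise. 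Your version dispenses with both the case distinction and the auxiliary function, handling the ``old'' part of the supremum by the trivial bound $M(t_1)/I(t_2)\leq M(t_1)/I(t_1)$, so it is a slightly more economical presentation of the same estimate; the paper's formulation has the minor pedagogical advantage of exhibiting the extremal profile $f_1$ that makes the bound sharp.
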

\begin{proof}
Put $Rf(t)=\sup_{0<s\leq t}I(s)f^*(s)$ for $t\in (0, 1)$. Let $0<t_1<t_2<1$ be given. We consider two cases: If $Rf(t_1)=Rf(t_2)$, then $(S_I f)(t_2)\leq (S_I f)(t_1)$ because $t\mapsto \frac{1}{I(t)}$ is nonincreasing. 
If $Rf(t_1)<Rf(t_2)$, we consider a function 
$$f_1(s)\coloneqq\begin{cases}f^*(s),& s\leq t_1,\\ f^*(t_1),& t_1< s<1.\end{cases}$$ Then $f_1^*=f_1$ and $f^*\leq f_1^*$. Hence, as $Rf(t_1)< Rf(t_2)$, we have $$Rf(t_2)=\sup_{t_1<s\leq t_2} I(s)f^*(s)$$ and, consequently, $$Rf_1(t_2)=\sup_{t_1<s\leq t_2}I(s)f_1^*(s).$$ We estimate
\begin{align*}
(S_If)(t_2)&\leq (S_I f_1)(t_2)=\frac{1}{I(t_2)}\sup_{t_1<s\leq t_2} I(s)f_1^*(s)\\
	&=\frac{1}{I(t_2)}\cdot I(t_2)f_1^*(t_2)=f_1^*(t_1)=f^*(t_1)\leq (S_I f)(t_1).
\end{align*}
\end{proof}

\begin{thm}\label{thm:bSd2}
Let $I$ be a nondecreasing function. Then the operator $S_I$ has the following endpoint mapping properties:
\begin{enumerate}[(i)]
	\item $S_I\colon L^\infty\to L^\infty$,
	\item $S_I\colon m_I\to m_I$.
\end{enumerate}
\end{thm}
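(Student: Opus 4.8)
The plan is to verify the two endpoint bounds directly from the definition of $S_I$, using only that $I$ is nondecreasing; no $\Delta_2$ or quasiconcavity is needed here. Both claims amount to showing that the sup-inside-$1/I(t)$ structure does not inflate the relevant norm.

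For (i), given $f\in L^\infty$ and $t\in(0,1)$, I would estimate
\[
(S_If)(t)=\frac{1}{I(t)}\sup_{0<s\leq t}I(s)f^*(s)\leq \frac{1}{I(t)}\sup_{0<s\leq t}I(s)\cdot\|f\|_\infty
=\frac{I(t)}{I(t)}\,\|f\|_\infty=\|f\|_\infty,
\]
where the middle step uses $f^*(s)\leq f^*(0+)=\|f\|_\infty$ and then $\sup_{0<s\leq t}I(s)=I(t)$ by monotonicity of $I$ (or, to be safe about one-sided limits, $\sup_{0<s\leq t}I(s)\le I(t)$ suffices since $I$ is nondecreasing). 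Taking the supremum over $t$ gives $\|S_If\|_\infty\leq\|f\|_\infty$.

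For (ii), the key observation is that by Lemma~\ref{lem:SMonotone} the function $S_If$ is already nonincreasing, so $(S_If)^*=S_If$ and hence $\|S_If\|_{m_I}=\sup_{0<t<1}I(t)(S_If)(t)$. Now for each $t$,
\[
I(t)(S_If)(t)=I(t)\cdot\frac{1}{I(t)}\sup_{0<s\leq t}I(s)f^*(s)=\sup_{0<s\leq t}I(s)f^*(s)\leq \sup_{0<s<1}I(s)f^*(s)=\|f\|_{m_I},
\]
and taking the supremum over $t\in(0,1)$ yields $\|S_If\|_{m_I}\leq\|f\|_{m_I}$. In fact equality holds here, since letting $t\to 1-$ recovers the full supremum, so $S_I$ is an isometry on $m_I$.

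**Main obstacle.** There is essentially no obstacle: the only point requiring a shade of care is the harmless one of distinguishing $\sup_{0<s\leq t}I(s)$ from $I(t)$ when $I$ fails to be left-continuous, which is sidestepped by using the inequality $\sup_{0<s\leq t}I(s)\leq I(t)$ rather than an identity. Both bounds then follow by inspection, with (ii) relying crucially on the already-established monotonicity of $S_If$ so that the $m_I$-quasinorm simplifies to the plain supremum.
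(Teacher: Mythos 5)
Your proof is correct and follows essentially the same route as the paper's: part (i) by the direct pointwise estimate using $f^*\leq\|f\|_\infty$ and $\sup_{0<s\leq t}I(s)\leq I(t)$, and part (ii) by invoking Lemma~\ref{lem:SMonotone} to replace $(S_If)^*$ by $S_If$ so that the $m_I$-quasinorm collapses to $\sup_{0<t<1}\sup_{0<s\leq t}I(s)f^*(s)=\|f\|_{m_I}$. (Note only that since the supremum in the definition includes $s=t$, the identity $\sup_{0<s\leq t}I(s)=I(t)$ holds exactly and no left-continuity caveat is needed.)
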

\begin{proof}
(i)  Given $f\in L^\infty$ we estimate
\begin{align*}
\|S_I f\|_\infty=\sup_{0<t< 1}\frac{1}{I(t)}\sup_{0<s\leq t}I(s)f^*(s)\leq \|f\|_\infty \sup_{0<t< 1}\frac{1}{I(t)}\sup_{0<s\leq t}I(s)= \|f\|_\infty \sup_{0<t< 1}\frac{1}{I(t)}\cdot I(t)= \|f\|_\infty.
\end{align*}

(ii) Let $f\in m_I$ be given. By Lemma~\ref{lem:SMonotone} we have
\begin{align*}
\|S_I f\|_{m_I}=\sup_{0<t<1}I(t)\left(\tau\mapsto \frac{1}{I(\tau)}\sup_{0<s\leq \tau}I(s)f^*(s)\right)^*(t)= \sup_{0<t< 1}I(t)\cdot\frac{1}{I(t)}\sup_{0<s\leq t}I(s)f^*(s)=\|f\|_{m_I}.
\end{align*}
\end{proof}

\begin{thm}\label{thm:BoundednessOfTIm}
Let $I$ be a quasiconcave function. Then the following holds:
\begin{enumerate}[(i)]
	\item $T_I\colon m_{\widetilde I}\to m_{\widetilde I}$,
	\item $T_I\colon L^1\to L^1$ if and only if $\int_0^t \frac{I(s)}{s}\d s\lesssim I(t)$ for $t\in (0, 1)$.
\end{enumerate}
\end{thm}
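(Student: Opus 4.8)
The plan is to treat the two items separately; the only real work is in the ``if'' part of~(ii). Item~(i) is essentially an identity: quasiconcavity of $I$ makes $t\mapsto T_If(t)$ nonincreasing, so $(T_If)^*=T_If$ and hence $\|T_If\|_{m_{\widetilde I}}=\sup_{0<t<1}\widetilde I(t)\,T_If(t)$. Since $\widetilde I(t)=t/I(t)$ cancels the factor $I(t)/t$ built into $T_I$, this equals $\sup_{0<t<1}\sup_{t\le s<1}\tfrac{s}{I(s)}f^*(s)$, and the double supremum collapses to $\sup_{0<s<1}\widetilde I(s)f^*(s)=\|f\|_{m_{\widetilde I}}$. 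Thus in fact $\|T_If\|_{m_{\widetilde I}}=\|f\|_{m_{\widetilde I}}$, which is~(i) with constant $1$.

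For the ``only if'' direction of~(ii) I would test $T_I$ on $f=\chi_{(0,\eps)}$, $\eps\in(0,1)$. Since $s\mapsto\widetilde I(s)$ is nondecreasing and $I$ is continuous on $(0,1)$, one computes $T_I\chi_{(0,\eps)}(t)=\tfrac{\eps}{I(\eps)}\cdot\tfrac{I(t)}{t}$ for $t\in(0,\eps)$ and $T_I\chi_{(0,\eps)}(t)=0$ for $t\ge\eps$, whence $\|T_I\chi_{(0,\eps)}\|_1=\tfrac{\eps}{I(\eps)}\int_0^\eps\tfrac{I(s)}{s}\d s$. Comparing with $\|\chi_{(0,\eps)}\|_1=\eps$ and letting $\eps$ range over $(0,1)$ gives exactly $\eqref{eq:00001}$; in particular boundedness of $T_I$ on $L^1$ forces $\int_0^\eps I(s)/s\,\d s<\infty$, since otherwise the left-hand side would already be infinite.

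For the ``if'' direction of~(ii)---the substantive part---I would argue as follows. As $T_I$ depends on $f$ only through $f^*$, I may take $f=f^*$ nonincreasing, and I want $\int_0^1T_If(t)\d t\lesssim\|f\|_1$. Decompose $(0,1)=\bigcup_{j\ge0}[2^{-j-1},2^{-j})$; on the $j$-th block $\sup_{t\le s<1}\tfrac{s}{I(s)}f(s)\le P_j:=\sup_{2^{-j-1}\le s<1}\tfrac{s}{I(s)}f(s)$, while $\int_{2^{-j-1}}^{2^{-j}}\tfrac{I(t)}{t}\d t\le\int_0^{2^{-j}}\tfrac{I(t)}{t}\d t\lesssim I(2^{-j})$ by $\eqref{eq:00001}$, so $\int_0^1T_If(t)\d t\lesssim\sum_{j\ge0}P_j\,I(2^{-j})$. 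The key point is that $\eqref{eq:00001}$ also forces the geometric-type estimate $\sum_{j\ge k}I(2^{-j})\lesssim I(2^{-k})$: monotonicity of $I$ gives $\int_0^{2^{-k}}\tfrac{I(s)}{s}\d s\ge\log2\sum_{j\ge k+1}I(2^{-j})$, and $\eqref{eq:00001}$ bounds the left side by a multiple of $I(2^{-k})$. Now put $q_k:=\sup_{2^{-k-1}\le s<2^{-k}}\tfrac{s}{I(s)}f(s)$, so $P_j\le\sum_{k\le j}q_k$; interchanging the order of summation and using the geometric estimate gives $\sum_jP_j\,I(2^{-j})\lesssim\sum_kq_k\,I(2^{-k})$. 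Finally, monotonicity of $f$ and of $I$ together with quasiconcavity in the form $I(2^{-k})\le2\,I(2^{-k-1})$ yield $q_k\,I(2^{-k})\lesssim2^{-k}f(2^{-k-1})\lesssim\int_{2^{-k-2}}^{2^{-k-1}}f(s)\d s$, and summing these disjoint pieces gives $\sum_kq_k\,I(2^{-k})\lesssim\|f\|_1$, which completes the estimate.

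The step I expect to be the main obstacle is exactly this last one. A naive pointwise bound---for instance replacing the supremum in $T_I$ by a nonincreasing majorant such as $\tfrac{I(t)}{t}G_If(t)$ or $\tfrac{I(t)}{t}\sup_{t\le s<1}R_If(s)$---is too wasteful and costs a logarithmic factor, as one already sees on $f=\chi_{(0,\delta)}$; what makes the argument work is a dyadic splitting that simultaneously respects the weight $I(t)/t$ and the location of the supremum, combined with the observation that $\eqref{eq:00001}$ secretly contains the summability $\sum_{j\ge k}I(2^{-j})\lesssim I(2^{-k})$.
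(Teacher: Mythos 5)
Your proof is correct, and for the substantive part it takes a genuinely different route from the paper. Part (i) coincides with the paper's argument (the paper simply says it is proved like the $S_I$ case: monotonicity of $T_If$ under quasiconcavity lets the rearrangement be dropped and the double supremum collapse, giving the norm identity you state). For part (ii), however, the paper does not argue directly at all: it invokes the known characterization of weighted boundedness of supremum operators from Gogatishvili--Opic--Pick (their Theorem~3.2) with the weight choice $u(t)=t/I(t)$, $w=1/u$, $v=1$, which immediately yields that $T_I\colon L^1\to L^1$ holds exactly when $\int_0^t \frac{I(s)}{s}\,\mathrm{d}s\lesssim I(t)$. You instead prove both implications by hand: necessity by testing on $\chi_{(0,\eps)}$ (which is also implicitly what the cited characterization encodes), and sufficiency by a dyadic block decomposition, using that \eqref{eq:00001} together with monotonicity of $I$ forces the geometric summability $\sum_{j\ge k}I(2^{-j})\lesssim I(2^{-k})$, then exchanging the order of summation and absorbing each block into $\int_{2^{-k-2}}^{2^{-k-1}}f^*$. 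I checked the steps (the bound $q_kI(2^{-k})\le 2\cdot 2^{-k}f^*(2^{-k-1})\le 8\int_{2^{-k-2}}^{2^{-k-1}}f^*$, using $I(2^{-k})\le 2I(2^{-k-1})$ from quasiconcavity, is the only delicate one) and they are sound. The trade-off: the paper's citation is shorter and comes with the full weighted characterization as a black box, while your argument is self-contained, uses nothing beyond monotonicity and quasiconcavity, and makes transparent exactly how \eqref{eq:00001} enters; it would serve as a reasonable replacement proof if one wished to avoid the external reference.
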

\begin{proof}
(i) The proof is similar to the proof of (ii) of the previous theorem.

(ii) Follows from \cite[Theorem 3.2]{GoOpPi} with weights $u(t)=\frac{t}{I(t)}$, $w=\frac{1}{u}$ and $v=1$ in their notation.
\end{proof}

\begin{prop}\label{lem:NIApproxMNI}
Let $I$ be a nondecreasing function. Then $I$ satisfies the average property if and only if 
\begin{equation}\label{eq:SlabyASilnyMarc} \sup_{0<s\leq t}I(s)f^{**}(s)\approx \sup_{0<s\leq t}I(s)f^*(s), \quad f\in \mathcal M_+(0, 1),  \ t\in (0, 1).\end{equation}

In this case, we have $m_I=M_I$ with equivalent norms, where $M_I$ is the Marcinkiewicz space with norm given by $$\|f\|_{M_I}=\sup_{0<t<1}I(t)f^{**}(t).$$
\end{prop}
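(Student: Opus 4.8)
The plan is to prove both directions by exploiting the interplay between $f^{**}$, $f^*$, and the average property $\int_0^t \frac{\d s}{I(s)} \lesssim \frac{t}{I(t)}$. The inequality $\sup_{0<s\leq t}I(s)f^*(s) \lesssim \sup_{0<s\leq t}I(s)f^{**}(s)$ is free, since $f^*\leq f^{**}$ pointwise; so the content is in the reverse inequality $\sup_{0<s\leq t}I(s)f^{**}(s)\lesssim \sup_{0<s\leq t}I(s)f^*(s)$, and in showing this reverse inequality forces the average property.

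For the direction ``average property $\Rightarrow$ \eqref{eq:SlabyASilnyMarc}'', fix $f\in\mathcal M_+(0,1)$ and $t\in(0,1)$, and set $M = \sup_{0<s\leq t}I(s)f^*(s)$. For any $s\leq t$ I would write $f^{**}(s) = \frac{1}{s}\int_0^s f^*(\sigma)\d\sigma \leq \frac{1}{s}\int_0^s \frac{M}{I(\sigma)}\d\sigma$, using $I(\sigma)f^*(\sigma)\leq M$ for $\sigma\leq s\leq t$. Then the average property gives $\frac{1}{s}\int_0^s \frac{\d\sigma}{I(\sigma)} \lesssim \frac{1}{I(s)}$, so $f^{**}(s)\lesssim \frac{M}{I(s)}$, i.e. $I(s)f^{**}(s)\lesssim M$. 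Taking the supremum over $s\leq t$ yields the claim. I'd then deduce $m_I = M_I$ with equivalent norms by taking the supremum over all $t\in(0,1)$ (noting that $\sup_{0<t<1}$ of the left sides of \eqref{eq:SlabyASilnyMarc} recovers $\|f\|_{M_I}$ and $\|f\|_{m_I}$ respectively, since $\sup_{0<t<1}\sup_{0<s\leq t}(\cdots) = \sup_{0<s<1}(\cdots)$).

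For the converse, ``\eqref{eq:SlabyASilnyMarc} $\Rightarrow$ average property'', I would test \eqref{eq:SlabyASilnyMarc} on the function $f = \frac{1}{I}$ (more precisely, since $1/I$ need not be nonincreasing, on a nonincreasing rearrangement or a suitable truncation — but $1/I$ is nonincreasing because $I$ is nondecreasing, so $f^* = f = 1/I$ directly). Then the right-hand side of \eqref{eq:SlabyASilnyMarc} is $\sup_{0<s\leq t}I(s)\cdot\frac{1}{I(s)} = 1$, while the left-hand side is $\sup_{0<s\leq t}I(s)f^{**}(s) \geq I(t)f^{**}(t) = I(t)\cdot\frac{1}{t}\int_0^t\frac{\d s}{I(s)}$. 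So \eqref{eq:SlabyASilnyMarc} forces $\frac{I(t)}{t}\int_0^t\frac{\d s}{I(s)} \lesssim 1$, which is exactly $\eqref{eq:00002}$. One caveat: $f = 1/I$ may not lie in $m_I$ or even be finite/integrable a priori, so I would instead apply the argument to the truncated functions $f_n = \min\{1/I, n\}$ (still nonincreasing, still in $L^\infty\subset m_I$), obtain the bound $\frac{I(t)}{t}\int_0^t \min\{1/I(s),n\}\d s \lesssim 1$ with constant independent of $n$, and let $n\to\infty$ via monotone convergence.

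The main obstacle is the mild technical care needed in the converse: ensuring the test function is legitimate (handled by truncation) and checking that the supremum $\sup_{0<s\leq t}$ on the left genuinely dominates the single value at $s=t$, which is immediate but must be stated. A secondary point is the passage from the pointwise-in-$t$ statement \eqref{eq:SlabyASilnyMarc} to the norm identity $m_I = M_I$: one must observe that both $\|\cdot\|_{m_I}$ and $\|\cdot\|_{M_I}$ are obtained as the $t\uparrow 1$ limit (equivalently, supremum over $t$) of the two sides of \eqref{eq:SlabyASilnyMarc}, so equivalence of norms is a direct corollary with the same implicit constants. Everything else is routine manipulation of rearrangements.
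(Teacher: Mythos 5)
Your proposal is correct and takes essentially the same route as the paper: the forward direction bounds $f^*\leq M/I$ on $(0,t]$ and applies the average property to $\frac{1}{s}\int_0^s\frac{\d\sigma}{I(\sigma)}$, and the converse tests $\eqref{eq:SlabyASilnyMarc}$ with $f=\frac{1}{I}$, exactly as the paper does. The truncation $f_n=\min\{1/I,n\}$ is harmless but not needed, since $\eqref{eq:SlabyASilnyMarc}$ is asserted for all $f\in\mathcal M_+(0,1)$ (infinite values permitted) and $(1/I)^*\leq 1/I$ makes the right-hand side at most $1$ directly.
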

\begin{proof}
“$\Rightarrow$” Fix $f\in\mathcal M_+(0, 1)$ and $t\in (0, 1)$. Denoting $$M=\sup_{0<s\leq t}I(s)f^*(s),$$
which we can without loss of generality assume to be finite, we have for every $s\in (0, t)$ that
$$f^*(s)\leq M\frac{1}{I(s)}.$$
Thus
\begin{align*}
\sup_{0<s\leq t}I(s)f^{**}(s)&\leq M \sup_{0<s\leq t}\frac{I(s)}{s}\int_0^s \frac{{\rm d} r}{I(r)}\lesssim M,
\end{align*}where the second inequality is exactly the average property. The converse inequality holds trivially, as $f^*\leq f^{**}$.

“$\Leftarrow$” Follows from $\eqref{eq:SlabyASilnyMarc}$ by choosing $f=\frac{1}{I}$.
\end{proof}

\begin{rem} One might ask whether it is truly necessary to work with the space $m_I$, whether the classical $M_I$ space would do the job. It can be shown that the boundedness of $S_I$ on $M_I$ enforces the average property of $I$; it suffices to test the inequality by $f=\chi_{(0, r)}, r\in (0, 1)$. Consequently, if one wants to work with more general functions in the fashion we do, the quasi-Banach spaces cannot be avoided.\end{rem}

It is evident that the operator $S_I$ and the space $m_I$ are intertwined in a sense that $S_I f$ is finite if and only if $f\in m_I$. Next theorem provides us with a result which essentially says that $S_I$ is the greatest operator which is bounded simultaneously on $L^\infty$ and $m_I$. To prove this, we require a $K$-functional related lemma first. The proof is carried out in the spirit of \cite{MM}, but we need a slightly stronger result, and, moreover, we work on a space of finite measure. We note that a result in a similar manner appeared recently in~\cite[Proposition~2.3]{Tak:23}

\begin{lemma}\label{lem:KFunctional}
Let $I\in\Delta_2$ be an increasing bijection on $(0, 1)$. Then $$K(f, t, m_I, L^\infty)\approx \sup_{0<s\leq I^{-1}(t)}I(s)f^*(s),\quad f\in m_I, t\in (0, 1).$$
\end{lemma}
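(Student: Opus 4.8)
The strategy is to estimate the $K$-functional $K(f,t,m_I,L^\infty)$ from above and below separately, relating it to the quantity $M(t):=\sup_{0<s\le I^{-1}(t)}I(s)f^*(s)$. The natural decomposition to use is the optimal decomposition of $f$ at the point $r=I^{-1}(t)$: writing $f=f_0+f_1$ with $f_0^*(s)=\min\{f^*(s),f^*(r)\}$ and $f_1^*(s)=(f^*(s)-f^*(r))\chi_{(0,r)}(s)$.

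\textbf{Upper bound.} With this decomposition, $\|f_0\|_\infty=f^*(r)$, so $t\|f_0\|_\infty=t f^*(r)=I(r)f^*(r)\le M(t)$. For the $m_I$-part, since $f_1^*$ is supported on $(0,r)$ and $f_1^*\le f^*$ there, we get $\|f_1\|_{m_I}=\sup_{0<s<1}I(s)f_1^*(s)=\sup_{0<s\le r}I(s)f_1^*(s)\le \sup_{0<s\le r}I(s)f^*(s)=M(t)$. Hence $K(f,t,m_I,L^\infty)\le \|f_1\|_{m_I}+t\|f_0\|_\infty\le 2M(t)$, giving the $\lesssim$ direction with no need for $\Delta_2$.

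\textbf{Lower bound.} Here is where the work lies. Take any admissible decomposition $f=g+h$ with $g\in m_I$, $h\in L^\infty$; we must show $\|g\|_{m_I}+t\|h\|_\infty\gtrsim M(t)$. Fix $s\le r=I^{-1}(t)$ realizing (or nearly realizing) the supremum $M(t)$, i.e. $I(s)f^*(s)\approx M(t)$. Using the subadditivity estimate $\eqref{eq:Almostsubadditivity}$ for rearrangements, $f^*(2s)\le g^*(s)+h^*(s)\le g^*(s)+\|h\|_\infty$. Multiplying by $I(2s)$ and using $I\in\Delta_2$ (so $I(2s)\approx I(s)$, and noting $2s$ may need to be compared with $r$ — one handles $s$ near $r$ by a harmless adjustment of the point, or splits into the regimes $s\le r/2$ and $r/2<s\le r$, the latter contributing $\le I(r)f^*(r/2)$ which one bounds via the supremum at a comparable point):
\begin{equation*}
M(t)\approx I(s)f^*(s)\lesssim I(2s)f^*(2s)\lesssim I(2s)g^*(s)+I(2s)\|h\|_\infty\lesssim I(s)g^*(s)+I(s)\|h\|_\infty.
\end{equation*}
Now $I(s)g^*(s)\le \|g\|_{m_I}$ and $I(s)\le I(r)=t$, so the right-hand side is $\lesssim \|g\|_{m_I}+t\|h\|_\infty$. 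Taking the infimum over all decompositions yields $M(t)\lesssim K(f,t,m_I,L^\infty)$.

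\textbf{Main obstacle.} The delicate point is the factor-of-two shift $f^*(s)\to f^*(2s)$ forced by the weak subadditivity $\eqref{eq:Almostsubadditivity}$: one must ensure that replacing $s$ by $2s$ (which could exceed $r=I^{-1}(t)$) does not break the comparison with $M(t)$. This is exactly what $\Delta_2$ is for — it lets us absorb $I(2s)\approx I(s)$ — but one still has to argue that the supremum defining $M(t)$ is, up to a constant, attained at some point $s$ with $2s\le r$, or otherwise handle the boundary regime $s\in(r/2,r]$ directly (there $f^*(s)\le f^*(r/2)$ and $I(s)\le t$, and one compares $I(r/2)f^*(r/2)$, which appears in $M(t)$, against the supremum). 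Packaging these cases cleanly, together with the routine verification that $M$ is itself a legitimate (nonincreasing, concave-majorizable) comparison function so that the equivalence is meaningful for all $t$, is the bulk of the argument; everything else is bookkeeping with the optimal decomposition and the definitions of the $m_I$- and $L^\infty$-norms.
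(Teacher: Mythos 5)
Your upper bound is fine and is exactly the paper's: the optimal decomposition at $r=I^{-1}(t)$ gives $K(f,t,m_I,L^\infty)\le \|f_1\|_{m_I}+t\|f_0\|_\infty\le 2\sup_{0<s\le r}I(s)f^*(s)$, with no use of $\Delta_2$. The genuine gap is in your lower bound: the first link of your displayed chain, $I(s)f^*(s)\lesssim I(2s)f^*(2s)$, is false. Since $f^*$ is nonincreasing it can collapse between $s$ and $2s$: take $f^*=\chi_{[0,\sigma)}$ with $s<\sigma<\min\{2s,\,r/2\}$; then $I(s)f^*(s)=I(s)$, the supremum $M(t)$ is (up to the $\Delta_2$ constant) attained at points $u\le r/2$, yet $I(2s)f^*(2s)=0$. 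So the obstacle is not, as you suggest, that $2s$ might exceed $r=I^{-1}(t)$, and no case analysis on $s\le r/2$ versus $s\in(r/2,r]$ can rescue a pointwise comparison of $f^*(s)$ with $f^*(2s)$ that fails at every scale.

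The repair is to aim \eqref{eq:Almostsubadditivity} the other way: for an arbitrary decomposition $f=g+h$ write $f^*(s)\le g^*\!\left(\frac{s}{2}\right)+h^*\!\left(\frac{s}{2}\right)\le g^*\!\left(\frac{s}{2}\right)+\|h\|_\infty$, multiply by $I(s)$, and use $\Delta_2$ in the form $I(s)\lesssim I\!\left(\frac{s}{2}\right)$ together with $I(s)\le I(r)=t$; this gives $I(s)f^*(s)\lesssim \|g\|_{m_I}+t\|h\|_\infty$ for every $s\le r$ simultaneously, so one takes the supremum over $s$ and then the infimum over decompositions, with no need to locate a near-maximizer or treat boundary regimes. (Even more simply, $|f|\le|g|+\|h\|_\infty$ a.e.\ yields $f^*(s)\le g^*(s)+\|h\|_\infty$ directly, so for this half $\Delta_2$ is only needed to make $m_I$ a legitimate quasi-Banach space via Proposition~\ref{prop:fact}.) This pointwise argument is the same mechanism the paper uses, phrased at the level of the quasinorm: $\sup_{0<s\le r}I(s)f^*(s)=\|f^*\chi_{(0,r)}\|_{m_I}$ is compared with $\|f^*(2s)\chi_{(0,r)}(2s)\|_{m_I}$ by boundedness of the dilation operator on the r.i.q.~space $m_I$, after which \eqref{eq:Almostsubadditivity} and the quasi-triangle inequality give the bound $\lesssim\|f_0\|_{m_I}+t\|f_1\|_\infty$. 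As written, though, your lower bound does not go through.
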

\begin{proof}
Let $f\in m_I$ and $t\in (0, 1)$ be given. Write $f=f_0+f_1$, where $f_0\in m_I$ and $f_1\in L^\infty$. By Proposition~\ref{prop:fact} $m_I$ is an r.i.q.~space and so, using the boundedness of the dilation operator, the monotonicity of $\|\cdot\|_{m_I}$ and $\eqref{eq:Almostsubadditivity}$, we estimate
\begin{align*}
\sup_{0<s\leq I^{-1}(t)}I(s)f^*(s)&=\|f^*\chi_{(0, I^{-1}(t))}\|_{m_I}\lesssim\|f^*(2s)\chi_{(0, I^{-1}(t))}(2s)\|_{m_I}\\
	&\leq\|f^*(2s)\chi_{(0, I^{-1}(t))}(s)\|_{m_I}\lesssim \left\|f_0^*\chi_{(0, {\varphi_X^{-1}(t)})}\right\|_{m_I}+\left\|f_1^*\chi_{(0, {\varphi_X^{-1}(t)})}\right\|_{m_I}\\
	&\leq \|f_0\|_{m_I}+\|f_1\|_\infty\cdot\|\chi_{(0, I^{-1}(t))}\|_{m_I}=\|f_0\|_{m_I}+t\|f_1\|_\infty.
\end{align*}
 On taking infimum over all such decompositions we obtain $$\sup_{0<s\leq I^{-1}}I(s)f^*(s)\lesssim K(f, t, m_I, L^\infty).$$

To prove the converse inequality, let $f_0$ and $f_1$ be the optimal decomposition of $f$ at point $I^{-1}(t)$ from $\eqref{E:f_0-f_1}$. Then
\begin{align*}
K(f, t, m_I, L^\infty)&\leq \|f_1\|_{m_I}+t\|f_0\|_\infty=\sup_{0<s\leq I^{-1}(t)}I(s)f_1^*(s)+tf^*(I^{-1}(t))\\
&\leq \sup_{0<s\leq I^{-1}(t)}I(s)f^*(s)+tf^*(I^{-1}(t))\leq 2 \sup_{0<s\leq I^{-1}(t)}I(s)f^*(s).
\end{align*}
\end{proof}

\begin{thm}\label{thm:SImIsNice}
Let $I\in\Delta_2$ be an increasing bijection on $(0, 1)$ and let $S$ be a sublinear operator defined on $m_I$. If
$S$ is bounded on $L^\infty$ and on $m_I$, then
\begin{equation*}
(Sf)^*(t)\lesssim (S_I f)(t),\quad f\in\mathcal M_+(0, 1), t\in (0, 1).
\end{equation*}

If, in addition, $I$ has the average property, then 
\begin{equation}\label{eq:2.151}
(Sf)^{**}(t)\lesssim (S_I f)(t),\quad f\in\mathcal M_+(0, 1), t\in (0, 1),
\end{equation}
and so 
\begin{equation}\label{eq:2.152}
(S_I f)^{**}(t)\lesssim (S_I f)(t),\quad f\in\mathcal M_+(0, 1), t\in(0, 1).
\end{equation}

In particular, for an r.i.~space $X\subset m_I$ we have $X\in\INT(L^\infty, m_I)$ whenever $S_I$ is bounded on $X$.
\end{thm}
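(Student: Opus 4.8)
The plan is to route everything through the $K$-functional of the couple $(m_I, L^\infty)$, for which Lemma~\ref{lem:KFunctional} supplies the explicit description $K(f,s,m_I,L^\infty)\approx\sup_{0<r\le I^{-1}(s)}I(r)f^*(r)$ (valid since $I\in\Delta_2$ makes $m_I$ an r.i.q.~space by Proposition~\ref{prop:fact}). Substituting $s=I(t)$ and using that $I$ is a bijection on $(0,1)$, this becomes $K(f,I(t),m_I,L^\infty)\approx\sup_{0<r\le t}I(r)f^*(r)=I(t)(S_If)(t)$. One may assume $f\in m_I$, since otherwise $\sup_{0<r\le t}I(r)f^*(r)=\infty$ for every $t$ (the portion of the supremum over $[t,1)$ is bounded by $f^*(t)<\infty$) and there is nothing to prove. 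Since $S$ is sublinear and bounded on $L^\infty$ and on $m_I$, Theorem~\ref{thm:KInequality} gives $c>0$ with $K(Sf,\tau,m_I,L^\infty)\lesssim K(f,c\tau,m_I,L^\infty)$ for $\tau>0$; because $\tau\mapsto K(f,\tau,\cdot,\cdot)$ is nonnegative, nondecreasing and concave, $K(f,c\tau,\cdot,\cdot)\le\max\{1,c\}K(f,\tau,\cdot,\cdot)$, and hence $K(Sf,I(t),m_I,L^\infty)\lesssim I(t)(S_If)(t)$. It then remains only to bound $(Sf)^*$ and $(Sf)^{**}$ from below by $K(Sf,I(t),m_I,L^\infty)$.

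For the first inequality I would argue as follows: for any decomposition $Sf=g+h$ with $g\in m_I$, $h\in L^\infty$, \eqref{eq:Almostsubadditivity} gives $(Sf)^*(2t)\le g^*(t)+h^*(t)\le I(t)^{-1}\|g\|_{m_I}+\|h\|_\infty$; multiplying by $I(t)$ and taking the infimum over decompositions yields $I(t)(Sf)^*(2t)\le K(Sf,I(t),m_I,L^\infty)\lesssim I(t)(S_If)(t)$, i.e.\ $(Sf)^*(2t)\lesssim(S_If)(t)$. Replacing $t$ by $t/2$ and using $I\in\Delta_2$ together with the monotonicity of $S_If$ from Lemma~\ref{lem:SMonotone} — which gives $(S_If)(t/2)\le\frac{I(t)}{I(t/2)}(S_If)(t)\approx(S_If)(t)$ — produces $(Sf)^*(t)\lesssim(S_If)(t)$.

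For \eqref{eq:2.151}, the additional assumption that $I$ has the average property upgrades the lower bound: by Proposition~\ref{lem:NIApproxMNI}, $\|g\|_{m_I}\approx\|g\|_{M_I}=\sup_{0<s<1}I(s)g^{**}(s)\ge I(t)g^{**}(t)$, so using the subadditivity of $g\mapsto g^{**}$ and $h^{**}\le\|h\|_\infty$ I get, for any decomposition $Sf=g+h$, that $(Sf)^{**}(t)\le g^{**}(t)+h^{**}(t)\lesssim I(t)^{-1}\|g\|_{m_I}+\|h\|_\infty$; taking the infimum gives $I(t)(Sf)^{**}(t)\lesssim K(Sf,I(t),m_I,L^\infty)\lesssim I(t)(S_If)(t)$, which is \eqref{eq:2.151}. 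For \eqref{eq:2.152} I would rerun this with $S_I$ itself in place of $S$: by Theorem~\ref{thm:bSd2}, $S_I$ is bounded on $L^\infty$ and on $m_I$, and by \eqref{eq:SIsubadditivity} together with $I\in\Delta_2$ it satisfies the quasi-triangle inequality $(S_I(f+g))(t)\lesssim(S_If)(t)+(S_Ig)(t)$, which is all the $K$-functional argument really uses — passing from sublinear to quasi-sublinear operators only affects constants.

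Finally, for the ``in particular'' assertion, let $X\subset m_I$ be an r.i.~space on which $S_I$ is bounded; since $L^\infty\emb X\emb m_I$, it suffices to check that every linear $T$ with $T\colon L^\infty\to L^\infty$ and $T\colon m_I\to m_I$ is bounded on $X$. Such a $T$ is sublinear, so the first inequality gives $(Tf)^*\lesssim S_If$ pointwise; as $S_If$ is nonincreasing, the monotonicity of the norm of $X$ and the boundedness of $S_I$ on $X$ give $\|Tf\|_X=\|(Tf)^*\|_X\lesssim\|S_If\|_X\lesssim\|f\|_X$, whence $X\in\INT(L^\infty,m_I)$. I expect the only real friction to be bookkeeping — threading Lemma~\ref{lem:KFunctional}, Theorem~\ref{thm:KInequality} and the two elementary lower bounds on $K(Sf,I(t),m_I,L^\infty)$ together while keeping the $\Delta_2$-dilations straight (which is exactly what absorbs the harmless factor $2$ in the argument of $(Sf)^*$) — together with the observation that the average property is precisely what turns the lower bound on the $K$-functional from one phrased in $(Sf)^*$ into one phrased in $(Sf)^{**}$, via $m_I\approx M_I$ and the subadditivity of the maximal rearrangement.
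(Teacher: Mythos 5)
Your proof is correct and follows essentially the same route as the paper: Lemma~\ref{lem:KFunctional} identifies $K(f,I(t),m_I,L^\infty)$ with $I(t)(S_If)(t)$, Theorem~\ref{thm:KInequality} transfers this through $S$, Proposition~\ref{lem:NIApproxMNI} upgrades the lower bound to one in terms of $(Sf)^{**}$ under the average property, and the quasi-subadditive self-application to $S_I$ together with the boundedness of $S_I$ on $X$ yield \eqref{eq:2.152} and the interpolation claim, exactly as in the paper. The only deviation is bookkeeping: you absorb the constant $c$ from Theorem~\ref{thm:KInequality} via concavity of the $K$-functional (so your pointwise bound holds on all of $(0,1)$ and the norm estimate needs no dilation), whereas the paper restricts to $t\in(0,d)$, compares $(S_If)(I^{-1}(cI(t)))$ with $(S_If)(t)$ by monotonicity, and then invokes the dilation operator; note also that your comparison $(S_If)(t/2)\lesssim(S_If)(t)$ really rests on the $\Delta_2$ computation you display, not on Lemma~\ref{lem:SMonotone}.
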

\begin{proof}
By Lemma~\ref{lem:KFunctional}, we have $$K(f, t, m_I, L^\infty)\approx \sup_{0<s\leq I^{-1}(t)}I(s)f^*(s).$$ Fix $f\in m_I$ and $t\in (0, d)$, where $d=\min\{\frac{1}{c}, cI\left(\frac{1}{c}\right)\}$ and $c\geq1$ is a constant from Theorem~\ref{thm:KInequality}. We estimate
\begin{equation}\label{eq:2.153}
\sup_{0<s\leq I^{-1}(t)}I(s)(Sf)^*(s)\lesssim \sup_{0<s\leq I^{-1}(ct)}I(s)f^*(s).
\end{equation} Passing from $t$ to $I(t)$ we arrive at
$$\sup_{0<s\leq t}I(s)(Sf)^*(s)\lesssim \sup_{0<s\leq I^{-1}(cI(t))}I(s)f^*(s).$$
Thus, choosing $s=t$ on the left-hand side and dividing by $I(t)$ gives us
$$(Sf)^*(t)\lesssim \frac{1}{I(t)}\sup_{0<s\leq I^{-1}(cI(t))}I(s)f^*(s)\approx (S_I f)(I^{-1}(cI(t)))\leq (S_I f)(t),$$ where the last inequality stems from Lemma~\ref{lem:SMonotone} and $c$ being no less than one.

Next, assuming $S_I$ is bounded on $X$ we estimate using the boundedness of the dilation operator:
$$\|(Sf)^*\|_X\approx\|(Sf)^*\chi_{(0, d)}\|_X\lesssim \|S_If\chi_{(0, d)}\|_X\approx \|S_If\|_X\lesssim\|f\|_X.$$

Now, regarding the “in addition" part of the theorem, we only need to show $\eqref{eq:2.151}$ and $\eqref{eq:2.152}$, as the rest follows from the previous part. First, $\eqref{eq:2.151}$ is a direct application of Proposition~\ref{lem:NIApproxMNI} on the left-hand side of $\eqref{eq:2.153}$. Second, $\eqref{eq:2.152}$ holds true, because of $\eqref{eq:SIsubadditivity}$ and the boundedness of the dilation operator on r.i.q.~spaces.
\end{proof}

\begin{rem}
    Should we assume that $I$ satisfies the average property, then $X\in\INT(L^\infty, M_I)$ if and only if $ S_I\colon X\to X$. The “if” part is exactly Theorem~\ref{thm:SImIsNice}. The “only if” part is proved in \cite[Theorem 1]{CwNi}.
\end{rem}

\begin{thm}\label{cor:ZRIQ}
Let $I\in\Delta_2$ be an increasing bijection on $(0, 1)$ and let $X$ be an r.i.~space. Define $\|f\|_Z\coloneqq \|S_I f\|_X$ for $f\in\mathcal M(0, 1)$. Then $\|\cdot\|_Z$ is an r.i.q.~norm and, denoting by $Z$ the r.i.q.~space corresponding to $\|\cdot\|_Z$, we have that
\begin{equation*}
S_I\colon Z\to Z.
\end{equation*} If $I$ satisfies the average property, then $\|\cdot\|_Z$ is equivalent to an r.i.~norm, and so $Z$ is an r.i.~space.
\end{thm}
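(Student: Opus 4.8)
The plan is to verify the axioms (Q1)--(Q6) of an r.i.q.~norm for $\|f\|_Z = \|S_I f\|_X$ one at a time, leaning on the monotonicity of $S_I$ (if $f\le g$ then $S_If\le S_Ig$), the fact that $S_If$ is always nonincreasing (Lemma~\ref{lem:SMonotone}), the quasi-subadditivity $\eqref{eq:SIsubadditivity}$ of $S_I$ under $\Delta_2$, and the boundedness of the dilation operator on every r.i.q.~space. First I would note rearrangement-invariance (P6/Q6): $S_If$ depends on $f$ only through $f^*$, and since $X$ is an r.i.~space, $\|f\|_Z=\|S_If\|_X=\|S_I(f^*)\|_X=\|f^*\|_Z$, so in particular $\|f\|_Z$ is well-defined for $f\in\mathcal M_+(0,1)$ and equals its own rearrangement value. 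For (Q1): homogeneity is immediate since $S_I$ is positively homogeneous; the triangle-type inequality follows from $\eqref{eq:SIsubadditivity}$, namely $\|f+g\|_Z = \|S_I(f+g)\|_X \lesssim \|(S_If)(\tfrac{\cdot}{2}) + (S_Ig)(\tfrac{\cdot}{2})\|_X \le \|(S_If)(\tfrac{\cdot}{2})\|_X + \|(S_Ig)(\tfrac{\cdot}{2})\|_X = \|E_2 S_If\|_X + \|E_2 S_Ig\|_X \lesssim \|f\|_Z + \|g\|_Z$, using that $E_2$ is bounded on $X$; the definiteness $\|f\|_Z=0\iff f=0$ a.e.\ follows because $f^*\le S_If$ pointwise, so $\|f\|_Z=0$ forces $\|f^*\|_X=0$ hence $f=0$ a.e., while the reverse is clear. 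For (Q2), monotonicity of $S_I$ and of $\|\cdot\|_X$ gives $f\le g\Rightarrow S_If\le S_Ig\Rightarrow \|f\|_Z\le\|g\|_Z$. For (Q3), the Fatou property: if $f_n\nearrow f$ then $f_n^*\nearrow f^*$, and one checks $S_If_n\nearrow S_If$ pointwise (the supremum defining $S_I$ commutes with the monotone limit); then $\|f_n\|_Z=\|S_If_n\|_X\nearrow\|S_If\|_X=\|f\|_Z$ by property (P3) of the r.i.q.~norm on $X$. For (Q4), $\|\chi_{(0,1)}\|_Z = \|S_I\chi_{(0,1)}\|_X = \|\chi_{(0,1)}\|_X<\infty$ since $S_I\chi_{(0,1)}=\chi_{(0,1)}$ (the supremum in the definition of $S_I$ equals $I(t)$ when $f^*\equiv 1$).

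The mapping property $S_I\colon Z\to Z$ is then the punchline and requires $\eqref{eq:2.152}$: we have $\|S_If\|_Z = \|S_I(S_If)\|_X$, and by Lemma~\ref{lem:SMonotone} $S_If$ is nonincreasing so $(S_If)^*=S_If$, while $S_I(S_If)(t) = \tfrac{1}{I(t)}\sup_{0<s\le t}I(s)(S_If)(s)$; since $s\mapsto I(s)(S_If)(s) = \sup_{0<r\le s}I(r)f^*(r)$ is nondecreasing, that inner supremum is simply attained at $s=t$, giving $S_I(S_If)(t) = \tfrac{1}{I(t)}\cdot I(t)(S_If)(t) = S_If(t)$. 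Hence $S_I$ is actually idempotent on nonincreasing functions, so $\|S_If\|_Z = \|S_If\|_X = \|f\|_Z$, and $S_I\colon Z\to Z$ with norm $1$. (Alternatively one invokes $\eqref{eq:2.152}$ together with the HLP-type monotonicity, but the direct idempotency argument is cleaner and avoids the average property.)

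For the final assertion, suppose $I$ has the average property. Then by $\eqref{eq:2.152}$ we have $(S_If)^{**}(t)\lesssim (S_If)(t)$ for all $f$ and $t\in(0,1)$, equivalently $(S_If)^{**}(t)\approx (S_If)(t)$ since always $g^*\le g^{**}$. The strategy is to show directly that $\|\cdot\|_Z$ satisfies the missing lattice/triangle axioms in their \emph{additive} (non-dilated) form, i.e.\ property (P1) with constant $1$, or rather to produce an equivalent functional that does. The natural candidate is $\tilde\rho(f) := \|(S_If)^{**}\|_X = \|M(S_If)\|_X$ where $M$ is the maximal operator $g\mapsto g^{**}$; since $M$ is subadditive and $X$ is an r.i.~(Banach) space, and since $S_I$ is quasi-subadditive with the dilation absorbed, one shows $\tilde\rho(f+g)\le \tilde\rho(f)+\tilde\rho(g)$ genuinely; monotonicity, the Fatou property, (P4), (P5) (using $f^*\le S_If\le (S_If)^{**}$ and $\int_0^1 f \le \rho(f)$ on $X$), and rearrangement-invariance of $\tilde\rho$ are inherited as before; and finally $\tilde\rho\approx\|\cdot\|_Z$ is exactly the content of $\eqref{eq:2.152}$ combined with $\|(S_If)^{**}\|_X\ge\|(S_If)\|_X$. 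Thus $\|\cdot\|_Z$ is equivalent to the r.i.~norm $\tilde\rho$, so $Z$ is an r.i.~space. The main obstacle is precisely this last step: verifying the genuine triangle inequality for $\tilde\rho$, which needs the interplay of the subadditivity of $M$, the $\Delta_2$-quasi-subadditivity of $S_I$, and the boundedness of $E_2$ on $X$ — one has to be careful that composing $M$ after $S_I$ does not reintroduce a dilation constant that $M$ cannot absorb, and here the key point is that $M(E_2 g)\le 2 E_2(Mg)$ or a similar pointwise bound, after which HLP on the r.i.~space $X$ finishes the estimate.
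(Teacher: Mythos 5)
Your verification of the r.i.q.~axioms and your argument for $S_I\colon Z\to Z$ are correct and essentially coincide with the paper's proof: the quasi-triangle inequality via \eqref{eq:SIsubadditivity} and the dilation operator, the Fatou property via $f_n^*\nearrow f^*$, and the observation that $s\mapsto\sup_{0<r\le s}I(r)f^*(r)$ is nondecreasing, so that $S_I(S_If)=S_If$ and $\|S_If\|_Z=\|f\|_Z$, are exactly the steps used there.

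The final assertion, however, contains a genuine gap. You propose $\tilde\rho(f)=\|(S_If)^{**}\|_X$, i.e.\ you compose the maximal operator $M\colon g\mapsto g^{**}$ \emph{outside} $S_I$, and you claim a genuine (constant-one) triangle inequality for $\tilde\rho$. The sketch you give cannot deliver this: the subadditivity of $S_I$ is only quasi-subadditivity, carrying a multiplicative constant $C\ge1$ coming from $\Delta_2$ (equivalently from $(f+g)^*(s)\le f^*(s/2)+g^*(s/2)$), and nothing in your toolkit removes that constant. Indeed, from \eqref{eq:SIsubadditivity} one gets $(S_I(f+g))^{**}(t)\lesssim (S_If)^{**}(t/2)+(S_Ig)^{**}(t/2)$, and since for nonincreasing $g$ one has $(E_2g)^{**}=E_2(g^{**})$ exactly, your ``key point'' $M(E_2g)\le 2E_2(Mg)$ together with HLP only yields $\tilde\rho(f+g)\le C\bigl(\tilde\rho(f)+\tilde\rho(g)\bigr)$ with some $C>1$ --- that is, $\tilde\rho$ is again merely an r.i.q.~norm equivalent to $\|\cdot\|_Z$, which is no progress. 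A further warning sign: the average property enters your argument only through \eqref{eq:2.152}, in the equivalence $\tilde\rho\approx\|\cdot\|_Z$; if your subadditivity claim were provable as sketched, the conclusion would follow from $\Delta_2$ alone, which Proposition~\ref{lem:NIApproxMNI} and the remark following it (boundedness questions on $M_I$ versus $m_I$) indicate one should not expect.

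The repair is to put the maximal function \emph{inside} the supremum, which is what the paper does: under the average property, Proposition~\ref{lem:NIApproxMNI} gives $\sup_{0<s\le t}I(s)f^{**}(s)\approx\sup_{0<s\le t}I(s)f^{*}(s)$, hence $\|f\|\coloneqq\|S_If^{**}\|_X=\bigl\|\tfrac{1}{I(t)}\sup_{0<s\le t}I(s)f^{**}(s)\bigr\|_X$ satisfies $\|\cdot\|\approx\|\cdot\|_Z$. This functional is genuinely subadditive with constant one, because $(f+g)^{**}\le f^{**}+g^{**}$ holds pointwise exactly, the supremum is subadditive, and $\|\cdot\|_X$ is a norm; no dilation and no $\Delta_2$ constant ever appear in the triangle inequality. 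The remaining axioms are inherited as in the first part, so $\|\cdot\|_Z$ is equivalent to the r.i.~norm $\|\cdot\|$ and $Z$ is an r.i.~space.
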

\begin{proof} 

The only property of (Q1) that requires some comment is the quasi-triangle inequality. To this end, let $f, g\in\mathcal M(0, 1)$ be given. Using $\eqref{eq:SIsubadditivity}$ and the boundedness of the dilation operator on r.i.~spaces we calculate
\begin{align*}
\|f+g\|_Z&=\|S_I(f+g)\|_X\lesssim\left\|S_If\left(\frac{t}{2}\right)+S_Ig\left(\frac{t}{2}\right)\right\|_X\\
&\leq \left\|S_If\left(\frac{t}{2}\right)\right\|_X+\left\|S_Ig\left(\frac{t}{2}\right)\right\|_X\approx \|S_I f\|_X+\|S_Ig\|_X=\|f\|_Z+\|g\|_Z.
\end{align*}
Property (P2) obviously holds. 

Let $f_n, f\in\mathcal M_+(0, 1)$ be such that $f_n\nearrow f$ a.e. Then $f_n^{*}\nearrow f^{*}$. Fix $t\in (0, 1)$ and let $K<\sup_{0<s\leq t}I(s)f^{*}(s)$. We find $t_0\in (0, t]$ such that $I(t_0)f^{*}(t_0)>K$. Then $$I(t_0)f_n^{*}(t_0)\nearrow I(t_0)f^{*}(t_0)$$ and so $(S_I f_n^{*})(t)\nearrow (S_I f^{*})(t)$. As $X$ is an r.i.~norm, we get that $\|f_n\|_Z\nearrow \|f\|_Z$ and (P3) holds.

Regarding (P4), we have $\|\chi_{(0, 1)}\|_Z=\|S_I \chi_{(0, 1)}\|_X=\|\chi_{(0, 1)}\|_X<\infty$.

Thus, functional $\|\cdot\|_Z$ is an r.i.q.~norm.  

Since $\|\cdot\|_X$ possesses property (P5), we can say the same about $\|\cdot\|_Z$, because $\|\cdot\|_X\leq\|\cdot\|_Z$.

Next, for every $f\in \mathcal M(0, 1)$, we estimate
\begin{align*}
\|S_If\|_Z&=\|S_I(S_If)\|_X=\left\|\frac{1}{I(t)}\sup_{0<s\leq t}I(s)\left(\tau\mapsto\frac{1}{I(\tau)}\sup_{0<r\leq \tau}I(r)f^*(r)\right)^*(s)\right\|_X\\
    &\leq \left\|\frac{1}{I(t)}\sup_{0<s\leq t}I(s)\cdot\frac{1}{I(s)}\sup_{0<r\leq s}I(r)f^*(r)\right\|_X=\|S_If\|_X=\|f\|_Z,
\end{align*} where the inequality comes from Lemma~\ref{lem:SMonotone}.

If $I$ satisfies the average property, Proposition~\ref{lem:NIApproxMNI} tells us that 
$$\|f\|_Z=\|S_I f\|_X\approx \|S_I f^{**}\|_X\eqqcolon \|f\|,\quad f\in\mathcal M_+(0, 1).$$

It follows from the previous and subadditivity of both $f\mapsto f^{**}$ and supremum, that $\|\cdot\|$ is an r.i.~norm and $\|\cdot\|_Z\approx \|\cdot\|$.
\end{proof}

Let us point out that the space $Z$ defined in such a manner is the largest r.i.q.~space contained in the space $X$ which admits boundedness of $S_I$.

\begin{thm}\label{lem:TIHLP}
Let $I$ be a quasiconcave function satisfying $\eqref{eq:00001}$. Then
\begin{equation}\label{eq:TIHLP}
(T_I f)^{**}(t)\lesssim (T_I f^{**})(t),\quad f\in\mathcal M_+(0, 1), t\in (0, 1).
\end{equation}
\end{thm}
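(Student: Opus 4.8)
The statement to prove is the Hardy--Littlewood--Pólya-type inequality $(T_I f)^{**}(t)\lesssim (T_I f^{**})(t)$ for quasiconcave $I$ satisfying $\eqref{eq:00001}$. The natural approach is to estimate the average $\frac{1}{t}\int_0^t (T_I f)^*(s)\,\d s$ directly. Since $I$ is quasiconcave, $T_I f$ is already nonincreasing (as observed just before Lemma~\ref{lem:SMonotone}), so $(T_I f)^*=T_I f$ and we must bound $\frac{1}{t}\int_0^t (T_I f)(s)\,\d s$. Unwinding the definition, $(T_I f)(s)=\frac{I(s)}{s}\sup_{s\le r<1}\widetilde I(r)^{-1} s\, f^*(r) = \frac{I(s)}{s}\sup_{s\le r<1}\frac{r}{I(r)}f^*(r)$ where $\widetilde I(r)=r/I(r)$; note $T_I f = T_I f^{*}$ since the operator only sees $f^{*}$. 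So the left side is controlled by $\frac{1}{t}\int_0^t \frac{I(s)}{s}\,\d s \cdot \sup_{0\le r<1}\frac{r}{I(r)}f^*(r)$ crudely, but that is too lossy; instead I would split the inner supremum at $s$.

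First I would write, for $s<t$,
\begin{equation*}
(T_I f)(s)=\frac{I(s)}{s}\max\left\{\sup_{s\le r\le t}\frac{r}{I(r)}f^*(r),\ \sup_{t\le r<1}\frac{r}{I(r)}f^*(r)\right\}\le \frac{I(s)}{s}\sup_{s\le r\le t}\frac{r}{I(r)}f^*(r)+\frac{s}{t}(T_I f)(t),
\end{equation*}
using $I(s)/s$ nonincreasing and $I(t)/t\le I(s)/s$, hence $\frac{I(s)}{s}\cdot\frac{t}{I(t)}\ge 1$, so the second term is at most $(T_I f)(t)\cdot\frac{I(s)}{s}\cdot\frac{t}{I(t)}$ — wait, that is the wrong direction; I would instead keep the second piece as $\frac{I(s)}{s}\sup_{t\le r<1}\frac{r}{I(r)}f^*(r) = \frac{I(s)}{s}\cdot\frac{t}{I(t)}(T_I f)(t)$ and note that when integrated against $\frac1t\int_0^t\cdots\d s$ this contributes $\frac{1}{t}\left(\int_0^t\frac{I(s)}{s}\d s\right)\frac{t}{I(t)}(T_I f)(t)\lesssim (T_I f)(t)$ by $\eqref{eq:00001}$, and $(T_I f)(t)\le (T_I f^{**})(t)$ since $f^*\le f^{**}$. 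For the first piece, the key is to dominate the truncated supremum by something monotone: bound $\sup_{s\le r\le t}\frac{r}{I(r)}f^*(r)$ using that on $[s,t]$ we have $r/I(r)=\widetilde I(r)$, and since $\widetilde I$ is nondecreasing (because $I/t$ nonincreasing), $\widetilde I(r)f^*(r)\le \widetilde I(r)f^*(s)\le \widetilde I(t)f^*(s)$... again too lossy. The cleaner route: compare $\frac{r}{I(r)}f^*(r)$ with $\frac{r}{I(r)}f^{**}(r)$, which is pointwise $\le$, and observe $\frac{r}{I(r)}f^{**}(r)$ is itself controlled; more precisely I would show $\frac{I(s)}{s}\sup_{s\le r\le t}\frac{r}{I(r)}f^*(r)\le C\, (T_I f^{**})(s)$ does not immediately help either.

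The cleanest strategy, and the one I expect to work, is: use that $T_I$ is \emph{bounded on $L^1$} (Theorem~\ref{thm:BoundednessOfTIm}(ii), which is exactly condition $\eqref{eq:00001}$) and \emph{bounded on $m_{\widetilde I}$} (Theorem~\ref{thm:BoundednessOfTIm}(i)), hence by Theorem~\ref{thm:KInequality} applied to the couples $(L^1,m_{\widetilde I})$ one gets $K(T_I f, t, L^1, m_{\widetilde I})\lesssim K(f, ct, L^1, m_{\widetilde I})$. One then needs a formula for this $K$-functional analogous to Lemma~\ref{lem:KFunctional}; alternatively, since $m_{\widetilde I}\supset L^\infty$ appropriately, one may work with the couple $(L^1,L^\infty)$ for which $K(f,t,L^1,L^\infty)=\int_0^t f^*(s)\,\d s = t f^{**}(t)$. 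Indeed $T_I\colon L^1\to L^1$ and $T_I\colon L^\infty\to L^\infty$ (the latter because $f^*\le \|f\|_\infty$ gives $T_I f(t)\le \frac{I(t)}{t}\sup_{t\le s<1}\frac{s}{I(s)}\|f\|_\infty\le\|f\|_\infty$ using $s/I(s)\le t/I(t)$, wait that needs $t/I(t)$ nondecreasing which holds, so $\sup_{t\le s}\frac{s}{I(s)}=\lim_{s\to1}\frac{s}{I(s)}$; hmm this need not equal $t/I(t)$, but $\frac{I(t)}{t}\cdot\frac{s}{I(s)}\le 1$ requires $\frac{s}{I(s)}\le\frac{t}{I(t)}$, i.e. $\widetilde I$ \emph{nonincreasing}, which is false — so in fact I must use $L^1$ and $m_{\widetilde I}$, not $L^\infty$). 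So: apply Theorem~\ref{thm:KInequality} with $(L^1,m_{\widetilde I})$, giving $K(T_I f,t,L^1,m_{\widetilde I})\lesssim K(f,ct,L^1,m_{\widetilde I})\le K(f^{**},ct,L^1,m_{\widetilde I})\cdot(\text{const})$ — here one uses that the $K$-functional for this couple is computed via rearrangements and $f^*\mapsto$ its value is HLP-monotone, plus $f^{**}$ has the same maximal function as $f$ so $(f^{**})^{**}\le 2f^{**}$... actually $f^*\le f^{**}$ and $(f^{**})^*=f^{**}$, and by Hardy's lemma the $K$-functional respects $\le$ in the $**$-order, giving $K(f,ct)\lesssim K(f^{**},ct)$. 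Finally one needs $K(T_I f^{**},t,L^1,m_{\widetilde I})\gtrsim t(T_I f^{**})^{**}(t)$ from below and $K(f,t,L^1,m_{\widetilde I})$-type estimate from above; but actually to recover the pointwise statement $(T_I f)^{**}(t)\lesssim (T_I f^{**})(t)$ we want a bound of the left side by the un-averaged right side, so I would instead just prove the weaker-looking but sufficient $t(T_I f)^{**}(t)=\int_0^t(T_I f)\lesssim t (T_I f^{**})(t)$ by the direct splitting above.

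\textbf{Main obstacle.} The crux is handling the truncated inner supremum $\sup_{s\le r\le t}\frac{r}{I(r)}f^*(r)$ when integrating in $s$ over $(0,t)$: a naive bound loses the gain and one must exploit $\eqref{eq:00001}$ together with the fact that swapping the order of the $\d s$-integral and the $r$-supremum, via Fubini after writing the supremum as $\sup$ over a chain, turns $\frac{1}{t}\int_0^t \frac{I(s)}{s}\big(\sup_{s\le r\le t}\widetilde I(r)f^*(r)\big)\d s$ into something like $\frac{1}{t}\sup_{r\le t}\widetilde I(r)f^*(r)\int_0^r\frac{I(s)}{s}\d s \lesssim \frac{1}{t}\sup_{r\le t}\widetilde I(r)f^*(r)I(r)=\frac1t\sup_{r\le t}r f^*(r)\le f^{**}(t)$ — and then one must relate $f^{**}(t)$ back to $(T_I f^{**})(t)$, which follows since $f^*\le f^{**}$ gives $(T_I f^{**})(t)\ge \frac{I(t)}{t}\cdot\frac{t}{I(t)}f^{**}(t)=f^{**}(t)$. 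So the real work is this Fubini/monotonicity manipulation of the truncated supremum against the average, and verifying the interchange is legitimate; I would carry it out by discretising the supremum or by a standard layer-cake argument, then pass to the limit.
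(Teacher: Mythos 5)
Your overall skeleton --- split the inner supremum at $t$, handle the tail piece with \eqref{eq:00001} together with $T_If\le T_If^{**}$, and aim to bound the truncated piece by $f^{**}(t)\le (T_If^{**})(t)$ --- is sound and parallels the paper's proof, which splits $f^*=f_0^*+f_1^*$ via the optimal decomposition \eqref{E:f_0-f_1} at the point $t$. But the step you yourself identify as the crux is wrong: the proposed interchange
\begin{equation*}
\frac{1}{t}\int_0^t \frac{I(s)}{s}\Big(\sup_{s\le r\le t}\tfrac{r}{I(r)}f^*(r)\Big)\d s\ \lesssim\ \frac{1}{t}\sup_{0<r\le t}\Big[\tfrac{r}{I(r)}f^*(r)\int_0^r\tfrac{I(s)}{s}\d s\Big]
\end{equation*}
is false, and so is its consequence that the truncated piece is $\lesssim \frac1t\sup_{r\le t}rf^*(r)$. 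Take $I(s)=s^{1/2}$, which satisfies \eqref{eq:00001}, and $f^*(r)=r^{-1}(2-\log r)^{-2}$, which is nonincreasing and integrable on $(0,1)$. For $t\le e^{-2}$ the function $r\mapsto \frac{r}{I(r)}f^*(r)=r^{-1/2}(2-\log r)^{-2}$ is decreasing on $(0,t]$, so the left-hand side equals $\frac1t\int_0^t s^{-1}(2-\log s)^{-2}\d s=\frac1t(2-\log t)^{-1}$, while the right-hand side is $\frac2t(2-\log t)^{-2}$; the ratio is $\approx 2-\log t\to\infty$ as $t\to0^+$. Note also that $\frac1t\sup_{r\le t}rf^*(r)=\frac1t(2-\log t)^{-2}$ is genuinely smaller than the truncated piece (which here equals $f^{**}(t)=\frac1t(2-\log t)^{-1}$), so no discretisation or layer-cake argument can rescue a bound through that quantity: the inequality you are trying to prove at this intermediate stage simply does not hold.

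The repair is the $L^1$ route you flirted with and then abandoned: the truncated piece is itself $T_I$ of a truncation. For $s\in(0,t)$ one has $\frac{I(s)}{s}\sup_{s\le r\le t}\frac{r}{I(r)}f^*(r)=\bigl(T_I(f^*\chi_{(0,t)})\bigr)(s)$, so by Theorem~\ref{thm:BoundednessOfTIm}(ii) --- the $L^1$-boundedness of $T_I$, which is exactly condition \eqref{eq:00001} --- its average over $(0,t)$ is $\lesssim\frac1t\|f^*\chi_{(0,t)}\|_1=f^{**}(t)\le (T_If^{**})(t)$. This is precisely how the paper treats the corresponding term: it applies $T_I$ to $f_1$ with $f_1^*=(f^*-f^*(t))\chi_{(0,t)}$ and uses $\|T_If_1\|_1\lesssim\|f_1\|_1\le tf^{**}(t)$, while the $f_0$-part reduces, via quasiconcavity, to $\frac1t\int_0^t\frac{I(s)}{s}\d s\cdot\sup_{t\le r<1}\frac{r}{I(r)}f^*(r)\lesssim (T_If)(t)$, i.e.\ the analogue of your (correctly handled) tail piece. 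With that substitution your argument closes; as written it has a genuine gap at its central step.
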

\begin{proof}
Fix $f\in \mathcal M_+(0, 1)$ and $t\in (0, 1)$. Let $f_0$ and $f_1$ be the optimal decomposition $\eqref{E:f_0-f_1}$ of $f$ at point $t$.

Then, as $T_I f$ is nonincreasing and supremum is subadditive, we have
\begin{align*}
(T_I f)^{**}(t)&=\frac{1}{t}\int_0^t\frac{I(s)}{s}\sup_{s\leq r<1}\frac{r}{I(r)}f^*(r) \d s\\
	&\leq \underbrace{\frac{1}{t}\int_0^t\frac{I(s)}{s}\sup_{s\leq r<1}\frac{r}{I(r)}f_0^*(r) \d s}_{I}+\underbrace{\frac{1}{t}\int_0^t\frac{I(s)}{s}\sup_{s\leq r<1}\frac{r}{I(r)}f_1^*(r) \d s}_{II}.
\end{align*}
As for $I$, we estimate
\begin{align*}
I&=\frac{1}{t}\int_0^t\frac{I(s)}{s}\sup_{s\leq r<1}\frac{r}{I(r)}\min\{f^*(r), f^*(t)\} \d s\\
	&=\frac{1}{t}\int_0^t\frac{I(s)}{s}\max\left\{\sup_{s\leq r<t }\frac{r}{I(r)}f^*(t), \sup_{t\leq r<1 }\frac{r}{I(r)}f^*(r)\right\} \d s\\
	&=\frac{1}{t}\int_0^t\frac{I(s)}{s}\sup_{t\leq r<1}\frac{r}{I(r)}f^*(r)\d s=\frac{I(t)}{t}\cdot\frac{1}{I(t)}\int_0^t\frac{I(s)}{s}\sup_{t\leq r<1}\frac{r}{I(r)}f^*(r)\d s\\
	&\lesssim\frac{I(t)}{t}\sup_{t\leq r<1}\frac{r}{I(r)}f^*(r)=(T_I f)(t).
\end{align*}

Next, using Theorem~\ref{thm:BoundednessOfTIm}, \emph{(ii)}, we estimate
\begin{align*}
II&=\frac{1}{t}\int_0^t\frac{I(s)}{s}\sup_{s\leq r<1}\frac{r}{I(r)}f_1^*(r) \d s
	\leq \frac{1}{t}\int_0^1\frac{I(s)}{s}\sup_{s\leq r<1}\frac{r}{I(r)}f_1^*(r) \d s\\
	&\lesssim \frac{1}{t}\int_0^1 f_1^*(s)\d s=\frac{1}{t}\int_0^t (f^*(s)-f^*(t))\d s\leq f^{**}(t).
\end{align*}
Putting everything together, we arrive at what we wanted:
$$(T_I f)^{**}(t)\lesssim (T_I f(t)+f^{**}(t))\lesssim (T_I f^{**})(t).$$
\end{proof}

\begin{rem}
    Notice that in the proof we used both $\eqref{eq:00001}$ and the boundedness of $T_I$ on $L^1$, which we know are equivalent. Now, taking the limit $t\to 1^-$ in $\eqref{eq:TIHLP}$, we get
    $$\int_0^1 (T_I f)(t)\d t\lesssim \lim_{t\to 1^-}\frac{I(t)}{t}\sup_{t\leq s<1}(R_I f^*)(s)=\int_0^1 f^*(t)\d t.$$ In other words, $\eqref{eq:TIHLP}$ is equivalent to the boundedness of $T_I$ on $L^1$.
\end{rem}
We shall finish this section with an important consequence of the previous theorem.

\begin{thm}\label{cor:TIMIsBoundedOnTarget}
Let $X$ be an r.i.~space and let $I$ be a quasiconcave function satisfying $\eqref{eq:00001}$. Then 
$$\left\|\frac{t}{I(t)}(T_I f)^{**}(t)\right\|_X\lesssim \left\|\frac{t}{I(t)} f^{**}(t)\right\|_X,\quad f\in \mathcal M_+(0, 1).$$ In other words, operator $T_I$ is bounded on $Y_X'$ whenever $X$ is an r.i.~space.
\end{thm}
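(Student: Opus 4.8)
The plan is to use Theorem~\ref{lem:TIHLP} to replace $f$ by the nonincreasing function $f^{**}$ inside $T_I$, and then to observe that on a nonincreasing function the supremum defining $T_I$ coincides with the one defining $G_I$; the inequality then reduces to the equivalence $\|G_If\|_X\approx\|R_If^*\|_X$ furnished by \eqref{eq:RJeGvX}.

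Carrying this out: since $I$ is quasiconcave and satisfies \eqref{eq:00001}, Theorem~\ref{lem:TIHLP} gives $(T_If)^{**}(t)\lesssim(T_If^{**})(t)$ on $(0,1)$. Multiplying by $t/I(t)$ and using the lattice property (P2) and the homogeneity of $\|\cdot\|_X$,
\[
\left\|\frac{t}{I(t)}(T_If)^{**}(t)\right\|_X\lesssim\left\|\frac{t}{I(t)}(T_If^{**})(t)\right\|_X .
\]
Since $f^{**}$ is nonincreasing, $(f^{**})^{*}=f^{**}$, so Definition~\ref{def:SIaTI} gives
\[
\frac{t}{I(t)}(T_If^{**})(t)=\sup_{t\le s<1}\frac{s}{I(s)}f^{**}(s)=\sup_{t\le s<1}\frac{1}{I(s)}\int_0^s f^*(r)\d r=\sup_{t\le s<1}(R_If^*)(s)=(G_If)(t).
\]
Finally \eqref{eq:RJeGvX} yields $\|G_If\|_X\approx\|R_If^*\|_X$, and $R_If^*(t)=\frac1{I(t)}\int_0^tf^*=\frac{t}{I(t)}f^{**}(t)$; chaining the three displays gives the asserted estimate.

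For the reformulation: by the definition of $Y_X$ and \eqref{eq:associateoperatorss}, the space $Y_X'$ is the optimal domain of $X'$ under the operator $R_I$ associate to $H_I$, so, using also \eqref{eq:RJeGvX}, $\|g\|_{Y_X'}\approx\|R_Ig^*\|_{X'}=\left\|\frac{t}{I(t)}g^{**}(t)\right\|_{X'}$. Since the inequality above holds for every r.i.\ space, applying it with $X'$ in place of $X$ reads exactly as $\|T_If\|_{Y_X'}\lesssim\|f\|_{Y_X'}$, i.e.\ $T_I$ is bounded on $Y_X'$.

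I do not anticipate a real obstacle: the substance is entirely contained in the two results quoted. Theorem~\ref{lem:TIHLP} is the Hardy--Littlewood--P\'olya type estimate for $T_I$, and it is there --- through \eqref{eq:00001}, equivalently the boundedness of $T_I$ on $L^1$ --- that the hypothesis on $I$ is genuinely needed; \eqref{eq:RJeGvX} is what permits discarding the supremum in $G_I$ at the level of r.i.\ norms. The only step requiring a little care is the identity $\frac{t}{I(t)}(T_If^{**})=G_If$, which hinges on $f^{**}$ being already nonincreasing so that no rearrangement is incurred inside $T_I$.
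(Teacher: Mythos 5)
Your proof is correct and takes essentially the same route as the paper's: apply Theorem~\ref{lem:TIHLP}, observe that $\frac{t}{I(t)}(T_If^{**})(t)=\sup_{t\le s<1}\frac{s}{I(s)}f^{**}(s)=(G_If)(t)$ since $f^{**}$ is already nonincreasing, and conclude with \eqref{eq:RJeGvX}. Your extra remark that the ``boundedness on $Y_X'$'' reformulation follows by applying the inequality with $X'$ in place of $X$ (using $\|g\|_{Y_X'}=\|R_Ig^*\|_{X'}$) is exactly the step the paper leaves implicit, and it is handled correctly.
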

\begin{proof}
For $f\in\mathcal M_+(0, 1)$ we estimate, owing to Theorem~\ref{lem:TIHLP},
\begin{align*}
\left\|\frac{t}{I(t)}(T_I f)^{**}(t)\right\|_X\lesssim \left\|\frac{t}{I(t)}(T_I f^{**})(t)\right\|_X=\left\| \sup_{t\leq s<1}\frac{s}{I(s)}f^{**}(s)\right\|_X\approx \left\|\frac{t}{I(t)}f^{**}(t)\right\|_X,
\end{align*} where the “$\approx$” is $\eqref{eq:RJeGvX}$. 
\end{proof}


\section{Optimality of function spaces}\label{ch:OS}
What follows is the main section of the paper. First two subsections are devoted to expressing the norm of the optimal target space in a more explicit manner. At the end of Subsection~\ref{ss465} we present the proof of Theorem~\ref{thm:Norm1}. Finally, the third subsection deals with a simplification of the optimal domain norm and contains the proof of Theorem~\ref{thm:main}.

\subsection{Optimal target space}\label{ss465}
 We now explore the norm of the optimal target space in a general setting, requiring only the condition $\eqref{eq:00001}$.

As we said in the preliminary section of the paper, given an r.i.~space $X$, its optimal target space $Y_X$ under the map $H_I$ (in our setting) always exists. Moreover, the following inclusions hold:
\begin{equation}\label{eq:OptimalInclusion}
    Y_{L^\infty}\subset Y_X\subset Y_{L^1}.
\end{equation} The situation with optimal domain spaces is a bit different. The following statement is a specification of a result in~\cite[Proposition 3.3]{Mi} to the situation suitable for our purposes.
\begin{prop}\label{prop:OptimalDomain}
    Let $I$ be a quasiconcave function and let $Y$ be an r.i.~space. Then the functional
    $$\|f\|\coloneqq\sup_{h\sim f}\|H_I h\|_Y,\quad f\in\mathcal M_+(0, 1),$$
 is an r.i.~norm if and only if $H_I 1\in Y$. In that case $\|\cdot\|_{X_Y}\coloneqq \|\cdot\|$ is the optimal domain norm for $Y$ under the mapping $H_I$.\end{prop}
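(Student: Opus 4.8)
Denote the functional by $\rho(f)\coloneqq\sup_{h\sim f}\|H_I h\|_Y$, where $h\sim f$ means $h$ and $f$ are equimeasurable ($h^*=f^*$). Since $\{h:h\sim f\}$ depends only on $f^*$, $\rho$ is rearrangement invariant for free, so the plan is to check that all the remaining r.i.-norm axioms (P1)--(P5) hold \emph{unconditionally} except for the finiteness axiom (P4), which holds precisely when $H_I1\in Y$ --- this being the asserted equivalence --- and then to read off the optimality. The structural facts I would lean on are that $H_I$ is \emph{linear}, \emph{monotone}, and compatible with increasing limits, together with the classical fact (Ryff's theorem) that on the nonatomic interval every $h$ with $h^*=f^*$ has the form $h=f^*\circ\sigma$ for some measure-preserving $\sigma\colon(0,1)\to(0,1)$.

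\emph{The equivalence and the easy axioms.} As $\chi_{(0,1)}$ is the only function equimeasurable with itself, $\rho(\chi_{(0,1)})=\|H_I\chi_{(0,1)}\|_Y=\|H_I1\|_Y$, so (P4) is equivalent to $H_I1\in Y$; in particular $\rho$ can be an r.i.~norm only if $H_I1\in Y$ (the ``only if''), and when $H_I1\in Y$ one even gets $\rho(f)\le\|f\|_\infty\|H_I1\|_Y$ for $f\in L^\infty$. Homogeneity and $\rho(f)=0\Rightarrow f=0$ are immediate (for the latter, $\rho(f)\ge\|H_If^*\|_Y$, and $H_If^*=0$ a.e.\ forces $f^*=0$). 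Axiom (P2): if $f\le g$ a.e.\ and $h=f^*\circ\sigma\sim f$, then $g^*\circ\sigma\sim g$ dominates $h$, so by monotonicity of $H_I$ and of $\|\cdot\|_Y$ one gets $\|H_Ih\|_Y\le\rho(g)$; take the supremum. Axiom (P3): for $f_n\nearrow f$ and a near-maximising $h=f^*\circ\sigma$, the functions $f_n^*\circ\sigma\nearrow h$, so $H_I(f_n^*\circ\sigma)\nearrow H_Ih$ and hence $\|H_I(f_n^*\circ\sigma)\|_Y\nearrow\|H_Ih\|_Y$ by (P3) for $Y$; thus $\sup_n\rho(f_n)\ge\|H_Ih\|_Y$, the reverse inequality being (P2). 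Axiom (P5): taking $h(r)\coloneqq f^*(1-r)\sim f$ (the nondecreasing rearrangement of $f$) and using $Y\emb L^1$, Fubini, $I(r)\le1$, the substitution $s=1-r$ and Chebyshev's inequality for the nonincreasing functions $1-s$ and $f^*$,
\begin{equation*}
\rho(f)\ \ge\ \|H_Ih\|_Y\ \gtrsim\ \|H_Ih\|_1\ =\ \int_0^1\frac{r}{I(r)}\,f^*(1-r)\,\d r\ \ge\ \int_0^1(1-s)\,f^*(s)\,\d s\ \ge\ \tfrac12\,\|f\|_1.
\end{equation*}

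\emph{The main obstacle: subadditivity.} The genuinely non-routine point --- and the one I expect to be the heart of the argument --- is the triangle inequality $\rho(f+g)\le\rho(f)+\rho(g)$. One may assume $\rho(f),\rho(g)<\infty$, so $f,g\in L^1$ by (P5). Given $h\sim f+g$, the plan is to invoke the standard decomposition of equimeasurable functions on a nonatomic space: there is a measure-preserving $\pi\colon(0,1)\to(0,1)$ with $h=(f+g)\circ\pi$, whence $\widetilde f\coloneqq f\circ\pi\sim f$, $\widetilde g\coloneqq g\circ\pi\sim g$ and $h=\widetilde f+\widetilde g$. By linearity of $H_I$ and the triangle inequality in $Y$,
\begin{equation*}
\|H_Ih\|_Y\ =\ \|H_I\widetilde f+H_I\widetilde g\|_Y\ \le\ \|H_I\widetilde f\|_Y+\|H_I\widetilde g\|_Y\ \le\ \rho(f)+\rho(g),
\end{equation*}
and the supremum over $h\sim f+g$ gives (P1). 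The existence of such $\pi$ --- equivalently, the ability to split $h$ along the same pattern in which $f+g$ is split --- is the only real subtlety; it is exactly what makes a supremum over all equimeasurable rearrangements linearise an additive operator, and it is classical (assembled from Ryff's theorem). Alternatively, the whole norm statement can be quoted from \cite[Proposition~3.3]{Mi} once $H_I1\in Y$ is recognised as the finiteness hypothesis appearing there.

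\emph{Optimality.} With $H_I1\in Y$, so that $\|\cdot\|_{X_Y}\coloneqq\rho$ is an r.i.~norm, the optimality is formal. Since $f\sim f$, $\|H_If\|_Y\le\rho(f)=\|f\|_{X_Y}$, i.e.\ $H_I\colon X_Y\to Y$ with norm $\le1$. Conversely, if $Z$ is an r.i.~space with $H_I\colon Z\to Y$, then for $f\in Z$ and every $h\sim f$ one has $h\in Z$, $\|h\|_Z=\|f\|_Z$, hence $\|H_Ih\|_Y\le\|H_I\|_{Z\to Y}\|f\|_Z$; the supremum over $h$ yields $\|f\|_{X_Y}\le\|H_I\|_{Z\to Y}\|f\|_Z$, that is $Z\emb X_Y$. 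So $X_Y$ is the optimal domain space for $Y$ under $H_I$, which is the claim.
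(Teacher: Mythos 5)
Your reductions of (P2), (P3) to Ryff's theorem (used in its legitimate direction $u=u^*\circ\sigma$), your identification of (P4) with $H_I1\in Y$ (which gives the ``only if''), the (P5) estimate via the increasing rearrangement $f^*(1-\cdot)$ and Chebyshev's integral inequality, and the formal optimality argument are all fine. The genuine gap is exactly where you locate the heart of the matter: the triangle inequality. The claim that every $h\sim f+g$ can be written as $h=(f+g)\circ\pi$ with $\pi$ measure preserving is not a consequence of Ryff's theorem; Ryff factors a function through its \emph{own} nonincreasing rearrangement, and since the measure-preserving map there is in general non-invertible, the factorization cannot be reversed so as to factor an arbitrary equimeasurable $h$ through $f+g$. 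Worse, even the weaker statement your argument actually needs --- $h=\tilde f+\tilde g$ with $\tilde f\sim f$, $\tilde g\sim g$ --- is false. Take $f(x)=2x\chi_{(0,\frac12)}(x)$, $g(x)=(2x-1)\chi_{(\frac12,1)}(x)$ and $h(x)=x$; then $f+g$ and $h$ are equimeasurable (both uniformly distributed), but if $h=\tilde f+\tilde g$ with $\tilde f,\tilde g\in\mathcal M_+(0,1)$, $\tilde f\sim f$, $\tilde g\sim g$, then, since $h>0$ a.e., the sets $\{\tilde f=0\}$ and $\{\tilde g=0\}$ are a.e.\ disjoint, each of measure $\frac12$, hence partition $(0,1)$ up to a null set; on $\{\tilde g=0\}$ one has $\tilde f(x)=x$, and equimeasurability of $\tilde f$ with $f$ then forces $|\{\tilde g=0\}\cap(\lambda_1,\lambda_2)|=\frac12(\lambda_2-\lambda_1)$ for every interval, contradicting the Lebesgue density theorem (the same computation also excludes signed decompositions). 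So subadditivity cannot be obtained by splitting $h$ ``along the same pattern'' as $f+g$.

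For comparison: the paper does not prove the proposition at all --- it is stated as a specification of \cite[Proposition 3.3]{Mi} --- so your fallback sentence quoting that reference is in effect the paper's own argument, and it is legitimate; but your primary, self-contained proof of (P1) does not stand. If you want to repair it, the standard route is duality plus the resonance of $((0,1),\lambda)$ \cite[Chapter 2, Theorem 2.7]{BS}: since $R_I$ is the associate operator of $H_I$, one has $\|H_Ih\|_Y=\sup_{\|u\|_{Y'}\le1}\int_0^1 h\,(R_Iu)\d t$, and taking the supremum over $h\sim f$ yields $\|f\|=\sup_{\|u\|_{Y'}\le1}\int_0^1 f^*(s)\,(R_Iu)^*(s)\d s$; subadditivity of each functional $f\mapsto\int_0^1 f^*(R_Iu)^*\d s$ then follows from $\int_0^t(f+g)^*\d s\le\int_0^t f^*\d s+\int_0^t g^*\d s$ together with Hardy's lemma $\eqref{eq:HardyLemma}$, and with this representation the remaining axioms can also be read off directly.
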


From this proposition it follows that if $\frac{1}{I}$ is integrable, then $X_{Y}$ exists for every r.i.~space $Y$ and, moreover, we have the inclusions
\begin{equation*}\label{eq:OptimalInclusion2}
    X_{L^\infty}\subset X_Y\subset X_{L^1}.
\end{equation*}

We shall now reveal the important relation between boundedness of $T_I$ and interpolation property of a function space.
\begin{prop}\label{lem:INTTargetSpace}
Let $I$ be a quasiconcave function, $X\subset \Lambda_I$ be an r.i.~space and assume that $T_I\colon X'\to X'$. Then $X\in\INT(L^\infty, \Lambda_I)$. If $I$ in addition satisfies $\eqref{eq:00001}$, then $Y_{X'}\in \INT(L^\infty, \Lambda_I)$.
\end{prop}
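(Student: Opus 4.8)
The plan is to deduce the interpolation property from the characterization of interpolation spaces between $L^\infty$ and a Marcinkiewicz-type space via boundedness of an associated supremum operator, together with the HLP principle. First I would observe that $\Lambda_I$ is a Marcinkiewicz endpoint in disguise: by duality, $\Lambda_I' = m_{\widetilde I}$ (up to equivalence of norms), since the associate of the classical Lorentz space with weight $\frac{I(t)}{t}$ is the Marcinkiewicz space with fundamental function $\widetilde I(t) = \frac{t}{I(t)}$; more carefully, $\|\cdot\|_{\Lambda_I}'$ is comparable to $\sup_{0<t<1}\widetilde I(t)f^{**}(t)$, which under $\eqref{eq:00001}$ (giving the average property for $\widetilde I$, by the discussion following Proposition~\ref{lem:NIApproxMNI}) is comparable to $\|f\|_{m_{\widetilde I}}$. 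Here is where the hypothesis $T_I\colon X'\to X'$ enters: by~\eqref{eq:associateoperatorss} this is equivalent to $T_I'\colon X\to X$ — but actually the cleaner route is to keep working on $X'$. Since $T_I\colon m_{\widetilde I}\to m_{\widetilde I}$ (Theorem~\ref{thm:BoundednessOfTIm}(i)) and, trivially, $T_I\colon L^1\to L^1$ under $\eqref{eq:00001}$ (Theorem~\ref{thm:BoundednessOfTIm}(ii)), the operator $T_I$ plays for the couple $(L^1, m_{\widetilde I})$ exactly the role that $S_I$ played for $(L^\infty, m_I)$ in Theorem~\ref{thm:SImIsNice}. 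So the first real step is to establish the analogue of Theorem~\ref{thm:SImIsNice} for $T_I$: if $S$ is a sublinear operator bounded on $L^1$ and on $m_{\widetilde I}$, then $(Sf)^{**}(t)\lesssim (T_I f)(t)$; consequently any r.i.~space $W$ with $T_I\colon W\to W$ satisfies $W\in\INT(L^1, m_{\widetilde I})$. The proof mirrors the $S_I$ case: compute $K(f,t,L^1,m_{\widetilde I})$ via an optimal-decomposition argument, apply Theorem~\ref{thm:KInequality}, then specialize the free parameter.

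Applying this to $W = X'$ gives $X'\in\INT(L^1, m_{\widetilde I})$. Now I would dualize. Interpolation between $L^\infty$ and $\Lambda_I$ for $X$ should follow from interpolation between $L^1$ and $\Lambda_I' = m_{\widetilde I}$ for $X' = X''$ — the point being that for a linear operator $T$, $T\colon L^\infty\to L^\infty$ iff $T'\colon L^1\to L^1$, $T\colon\Lambda_I\to\Lambda_I$ iff $T'\colon m_{\widetilde I}\to m_{\widetilde I}$ (using \eqref{eq:associateoperatorss} and $m_{\widetilde I} = \Lambda_I'$), and $T\colon X\to X$ iff $T'\colon X'\to X'$; hence $X'\in\INT(L^1,m_{\widetilde I})$ forces $X\in\INT(L^\infty,\Lambda_I)$. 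One technical point to check is that all the couples involved are compatible — but this was already noted in the excerpt (any two r.i.q.~spaces form a compatible couple), and the density hypotheses needed to invoke the $K$-functional machinery hold on the $L^1$ side (simple functions are dense in $L^1$).

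For the second assertion, assume additionally $\eqref{eq:00001}$. By Theorem~\ref{cor:TIMIsBoundedOnTarget}, $T_I$ is bounded on $Y_X'$ for every r.i.~space $X$; here we want it on $Y_{X'}'$, i.e. the space above applied to the domain space $X'$ in place of $X$. So $T_I\colon Y_{X'}'\to Y_{X'}'$ holds automatically, and then the first part of the proposition — applied with $X$ replaced by $Y_{X'}$, whose associate satisfies the required boundedness and which embeds into $\Lambda_I$ because $Y_{X'}$ contains $Y_{L^\infty}=\Lambda_I$... wait, inclusion goes the other way: $Y_{L^\infty}\subset Y_{X'}$ means $\Lambda_I\subset Y_{X'}$, so I should instead note $Y_{X'}\subset Y_{L^1}$ and that $Y_{L^1}\subset\Lambda_I$ up to equivalence, or more directly that $H_I 1\in X'$-considerations give $Y_{X'}\hookrightarrow\Lambda_I$ — gives $Y_{X'}\in\INT(L^\infty,\Lambda_I)$. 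The main obstacle I anticipate is precisely the identification $\Lambda_I' \approx m_{\widetilde I}$ and the clean transfer of the interpolation statement across associate spaces: one must be careful that $\Lambda_I$ (a genuine r.i.~space under $\eqref{eq:00001}$, by Proposition~\ref{lem:NIApproxMNI}-type reasoning applied to the level-function/average structure) has $m_{\widetilde I}$ as its associate with equivalent norms, and that the embedding $X\subset\Lambda_I$ is exactly what makes $X$ lie between the endpoints $L^\infty\hookrightarrow\Lambda_I$. Everything else is a routine adaptation of the $S_I$-arguments already in Section~\ref{supr}.
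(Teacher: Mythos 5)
Your plan has three concrete gaps, the first two of which are fatal to the argument as written. (i) The first assertion of the proposition is stated for a general quasiconcave $I$, \emph{without} $\eqref{eq:00001}$, but every pillar of your route requires $\eqref{eq:00001}$: the identification $\Lambda_I'=m_{\widetilde I}$ is exactly the average property of $\widetilde I$, i.e.\ $\eqref{eq:00001}$ (via Proposition~\ref{lem:NIApproxMNI}); the endpoint bound $T_I\colon L^1\to L^1$ is \emph{equivalent} to $\eqref{eq:00001}$ by Theorem~\ref{thm:BoundednessOfTIm}(ii), not trivial; and the lower Holmstedt-type bound $\int_0^{I^{-1}(t)}f^*(s)\d s\lesssim K(f,t,L^1,m_{\widetilde I})$, which you need to extract pointwise information about $Sf$, again forces $\int_0^a\frac{I(s)}{s}\d s\lesssim I(a)$ (the paper's Proposition~\ref{prop:KFunctional2} gives only the upper estimate). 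So at best you prove a weaker statement than the one claimed. (ii) Your proposed key lemma, $(Sf)^{**}(t)\lesssim (T_If)(t)$ for every sublinear $S$ bounded on $L^1$ and $m_{\widetilde I}$, is false: take $S=\mathrm{Id}$ and $f=\chi_{(0,\varepsilon)}$; then $f^{**}(t)=\varepsilon/t>0$ while $(T_If)(t)=0$ for $t>\varepsilon$, because the supremum defining $T_I$ only sees $f^*$ on $[t,1)$. The correct substitute must carry an extra $f^{**}$ term (compare Theorem~\ref{lem:TIHLP}, whose right-hand side is $T_If^{**}$, not $T_If$); this is repairable, since $f^{**}\le (T_If)^{**}$ and HLP then still yields $\|Sf\|_W\lesssim\|T_If\|_W$, but the step as stated fails. (iii) The dualization ``$X'\in\INT(L^1,m_{\widetilde I})$ forces $X\in\INT(L^\infty,\Lambda_I)$'' is not justified: $\eqref{eq:associateoperatorss}$ presupposes that $T$ \emph{has} an associate operator in the integral sense, and an arbitrary linear operator bounded on $L^\infty$ and on $\Lambda_I$ -- which is what the definition of $\INT(L^\infty,\Lambda_I)$ obliges you to handle -- need not admit one (its Banach adjoint acts on $(L^\infty)^*\supsetneq L^1$). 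Passing the interpolation property from a couple to its associate couple is a genuinely nontrivial matter, not a formal consequence of H\"older duality, and you give no argument for it.

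For comparison, the paper's proof is direct and needs none of this machinery: it takes the optimal decomposition $f=f_0+f_1$ at level $t$, uses boundedness of $T$ on $L^\infty$ for the $f_0$-part and on $\Lambda_I$ for the $f_1$-part to obtain $\int_0^t\frac{I(s)}{s}(Tf)^*(s)\d s\lesssim\int_0^t\frac{I(s)}{s}f^*(s)\d s$ for all $t$, then applies Hardy's lemma with the nonincreasing function $s\mapsto\sup_{s\le r<1}\frac{r}{I(r)}g^*(r)$ to convert this into $\int_0^1(Tf)^*(s)(T_Ig)(s)\d s\lesssim\int_0^1 f^*(s)(T_Ig)(s)\d s$, and concludes by H\"older together with the hypothesis $T_I\colon X'\to X'$; no $K$-functionals, no duality transfer, and no $\eqref{eq:00001}$ are needed for the first claim. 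Your treatment of the second assertion (Theorem~\ref{cor:TIMIsBoundedOnTarget} giving $T_I$ on $Y_{X'}'$, plus $Y_{X'}\subset Y_{L^1}=\Lambda_I$ under $\eqref{eq:00001}$, then applying the first part to $Y_{X'}$) does match the paper, but it rests on a first part that must be valid in the stated generality, which your route does not deliver.
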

\begin{proof}
Let $f\in\mathcal M_+(0, 1)$ and $t\in (0, 1)$ be given. Let $f_0$ and $f_1$ be the optimal decomposition $\eqref{E:f_0-f_1}$ of $f$ at point $t$. Let $T$ be a linear operator bounded on both $L^\infty$ and $\Lambda_I$. Using the subadditivity of $f\mapsto f^{**}$ and Hardy's lemma $\eqref{eq:HardyLemma}$, we estimate
\begin{equation*}
\begin{split}
\int_0^t \frac{I(s)}{s} (Tf)^*(s)\d s&=\int_0^t \frac{I(s)}{s} (T(f_0+f_1))^*(s)\d s\leq \int_0^t \frac{I(s)}{s} \left( (Tf_0)^*(s)+(Tf_1)^*(s) \right)\d s\\
	&=\int_0^t \frac{I(s)}{s} (Tf_0)^*(s)\d s+\int_0^t \frac{I(s)}{s} (Tf_1)^*(s)\d s\eqqcolon I+ II.
\end{split}
\end{equation*}
Now, for $I$, we estimate 
\begin{align*}
\int_0^t \frac{I(s)}{s} (Tf_0)^*(s)\d s \leq \int_0^t \frac{I(s)}{s} \|Tf_0\|_\infty\d s\lesssim \int_0^t \frac{I(s)}{s} \|f_0\|_\infty\d s=\int_0^t \frac{I(s)}{s} f_0^*(s)\d s.
\end{align*}
As for the $II$, we compute
\begin{align*}
\int_0^t \frac{I(s)}{s} (Tf_1)^*(s)\d s\leq \int_0^1 \frac{I(s)}{s} (Tf_1)^*(s)\d s\lesssim \int_0^1 \frac{I(s)}{s} f_1^*(s)\d s=\int_0^t\frac{I(s)}{s} f_1^*(s)\d s.
\end{align*}
Combining the last two estimates yields $$\int_0^t \frac{I(s)}{s} (Tf)^*(s)\d s\lesssim \int_0^t \frac{I(s)}{s}f^*(s)\d s.$$ Applying Hardy's lemma $\eqref{eq:HardyLemma}$ to $h(s)=\sup_{s\leq t<1}\frac{t}{I(t)}g^*(t), g\in\mathcal M_+(0, 1), s\in (0, 1)$, we obtain
$$\int_0^1 \frac{I(s)}{s}\sup_{s\leq t<1}\frac{t}{I(t)}g^*(t)(Tf)^*(s)\d s\lesssim \int_0^1 \frac{I(s)}{s}\sup_{s\leq t<1}\frac{t}{I(t)}g^*(t) f^*(s)\d s,$$ which is nothing else than
$$\int_0^1 (T_I g)(s) (Tf)^*(s)\d s\lesssim\int_0^1 (T_I g)(s) f^*(s)\d s.$$
Finally,
\begin{align*}
\int_0^1 (Tf)^*(t)g^*(t)\d t\leq \int_0^1 (Tf)^*(t) (T_I g)(t)\d t\lesssim \int_0^1 f^*(t)(T_I g)(t)\d t\leq \|f\|_X\|T_I g\|_{X'}\lesssim \|f\|_X \|g\|_{X'}.
\end{align*} Division by $\|g\|_{X'}, g\neq 0$, followed by taking supremum over $\|g\|_{X'}\leq 1$ provides us with $\|Tf\|_X\lesssim \|f\|_X, f\in\mathcal M_+(0, 1)$.

Assume now that $I$ satisfies $\eqref{eq:00001}$. Theorem~\ref{cor:TIMIsBoundedOnTarget} guarantees that $$\|T_I f\|_{Y'_{X'}}\lesssim\|f\|_{Y'_{X'}},\quad f\in\mathcal M_+(0, 1).$$ It therefore remains to show that $Y_{X'}\subset \Lambda_I$ holds for every r.i.~space $X$. This is by $\eqref{eq:OptimalInclusion}$ equivalent to showing that $Y_{L^1}\subset \Lambda_I$. We show that, in fact, $Y_{L^1}=\Lambda_I$.
We know $$\|f\|_{Y_{L^1}'}=\|R_If^*\|_\infty=\sup_{0<t<1}\frac{t}{I(t)}f^{**}(t)=\|f\|_{M_{\widetilde I}},\quad f\in\mathcal M_+(0, 1).$$ Observe that the condition $\eqref{eq:00001}$ simply states that $\widetilde I$ satisfies the average property. Consequently, Proposition~\ref{lem:NIApproxMNI} asserts that $M_{\widetilde I}=m_{\widetilde I}$, from whence it follows that $Y_{L^1}=\Lambda_I$.
\end{proof}

Our next objective is to describe the norm of the optimal target space $Y_X$. The first step in this direction is to describe the norm of $Y_X'$ via a functional, $\|\cdot\|_Z$, such that $S_I$ is bounded thereon. We begin by exploring the mapping properties of the operator $R_I$.

\begin{lemma}\label{lem:BoundednessOfRI}
Let $I$ be a quasiconcave function satisfying $\eqref{eq:00001}$. Then the operator $R_I$ has the following mapping properties:
\begin{enumerate}[(i)]
    \item $R_I\colon m_{\widetilde I}\to L^\infty$,
    \item $R_I\colon L^1\to m_I$,
    \item $R_I\colon L^1\to (m_I)_b$.
\end{enumerate}
\end{lemma}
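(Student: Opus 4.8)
The plan is to verify each of the three mapping properties of $R_I$ separately, exploiting the definitions of the Marcinkiewicz-type spaces $m_I$ and $m_{\widetilde I}$ and the hypothesis $\eqref{eq:00001}$.

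\textbf{Part (i): $R_I\colon m_{\widetilde I}\to L^\infty$.} Recall that $\widetilde I(t)=\frac{t}{I(t)}$, so $f\in m_{\widetilde I}$ means $\sup_{0<s<1}\frac{s}{I(s)}f^*(s)=\|f\|_{m_{\widetilde I}}<\infty$, i.e.\ $f^*(s)\leq \|f\|_{m_{\widetilde I}}\frac{I(s)}{s}$ for a.e.\ $s$. Then for every $t\in(0,1)$,
\begin{align*}
R_If(t)=\frac{1}{I(t)}\int_0^t f(s)\d s\leq \frac{1}{I(t)}\int_0^t f^*(s)\d s\leq \frac{\|f\|_{m_{\widetilde I}}}{I(t)}\int_0^t\frac{I(s)}{s}\d s\lesssim \frac{\|f\|_{m_{\widetilde I}}}{I(t)}\cdot I(t)=\|f\|_{m_{\widetilde I}},
\end{align*}
where the last inequality is exactly $\eqref{eq:00001}$. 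Taking the supremum over $t$ gives $\|R_If\|_\infty\lesssim\|f\|_{m_{\widetilde I}}$.

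\textbf{Part (ii): $R_I\colon L^1\to m_I$.} Here one should use the maximal rearrangement: for $f\in L^1$ and any $t$, $R_If(t)\leq \frac{1}{I(t)}\int_0^t f^*(s)\d s=\frac{t}{I(t)}f^{**}(t)$. Since $\widetilde I$ is nondecreasing bounded by $1$ (as $I(t)\geq t$) and $f^{**}$ is nonincreasing, $R_If$ need not be monotone, but its rearrangement is controlled: I would argue that $\|R_If\|_{m_I}=\sup_t I(t)(R_If)^*(t)$. Using that $R_If(t)\leq \frac{t}{I(t)}f^{**}(t)$ and that $R_I$ is associate to $H_I$, or more directly invoking $\eqref{eq:RJeGvX}$ together with monotonicity, one reduces to estimating $\sup_t I(t)\cdot\frac{t}{I(t)}f^{**}(t)=\sup_t t f^{**}(t)=\|f\|_1$; the final step uses that $t f^{**}(t)=\int_0^t f^*\leq\|f\|_1$. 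So $\|R_If\|_{m_I}\lesssim\|f\|_1$. The small subtlety I expect to watch carefully is passing from the pointwise bound $R_If\leq \frac{t}{I(t)}f^{**}$ to a bound on $(R_If)^*$, which is handled since $G_If(t)=\sup_{t\leq s<1}R_If^*(s)$ is nonincreasing, dominates $(R_If)^*$, and satisfies $\|G_If\|_{m_I}\approx\|R_If^*\|_{m_I}\le \sup_t I(t)\frac{t}{I(t)}f^{**}(t)$.

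\textbf{Part (iii): $R_I\colon L^1\to (m_I)_b$.} Given (ii), it remains to show $R_If$ actually lands in the closure of simple functions $(m_I)_b$, not merely in $m_I$. For $f\in L^1$ a standard density argument works: simple functions are dense in $L^1$, so approximate $f$ by simple $f_n\to f$ in $L^1$; by (ii), $R_If_n\to R_If$ in $m_I$, so it suffices to check $R_Ig\in(m_I)_b$ for $g$ simple, which follows since $R_Ig$ is bounded (being continuous and, for $g$ bounded with $g\in L^1$, $R_Ig(t)\le\frac{t}{I(t)}\|g\|_\infty\le\|g\|_\infty$) hence $R_Ig\in L^\infty\cap m_I\subset(m_I)_b$, using that $L^\infty$-functions on a finite measure space lie in the closure of simple functions of any r.i.q.\ space. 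The main obstacle across the whole lemma is the lack of monotonicity of $R_If$, which forces one to route the $m_I$-estimates through $G_I$ and $\eqref{eq:RJeGvX}$ rather than working with $R_If$ directly; once that is set up, each part is a short computation.
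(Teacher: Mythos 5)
Your parts (i) and (iii) are correct and essentially follow the paper's own route: for (i) the paper performs the same computation, phrased as $\sup_t \frac{t}{I(t)}f^{**}(t)\approx\sup_t\frac{t}{I(t)}f^{*}(t)$ via Proposition~\ref{lem:NIApproxMNI} (note that \eqref{eq:00001} is exactly the average property of $\widetilde I$), whereas you unwind the same inequality directly from \eqref{eq:00001}; for (iii) the paper likewise approximates in $L^1$ by bounded functions and uses $\abs{R_If_n(t)-R_If(t)}\le \|f_n-f\|_1/I(t)$, handling (ii) and (iii) at once.

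The one step I would not accept as written is in (ii): you dominate $(R_If)^*$ by $G_If$ and then invoke \eqref{eq:RJeGvX} with $X=m_I$. But \eqref{eq:RJeGvX} is stated (via \cite[Theorem 9.5]{CiPiSl}) only for r.i.\ Banach spaces, and under the hypotheses of this lemma $m_I$ need not be Banach: \eqref{eq:00001} is the average property of $\widetilde I$, not of $I$ (for instance $I(t)=t$ satisfies \eqref{eq:00001} while $1/I$ is not integrable, so by Proposition~\ref{lem:NIApproxMNI} $m_I\neq M_I$ and $m_I$ is merely quasi-Banach). So that citation is used outside its scope. The gap is harmless because the detour is unnecessary: from $\abs{R_If(t)}\le \frac{1}{I(t)}\int_0^1 f^*(s)\d s=\frac{\|f\|_1}{I(t)}$ and the fact that $1/I$ is nonincreasing, the rearrangement of $R_If$ is dominated by the same nonincreasing majorant, i.e.\ $(R_If)^*(t)\le \|f\|_1/I(t)$ --- the very majorization trick you already use in (i) --- whence $\|R_If\|_{m_I}\le\|f\|_1$. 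This is precisely the paper's one-line proof of (ii), and applied to $f-f_n$ it also delivers the convergence needed in (iii). Alternatively, if you prefer to keep $G_I$, observe directly that $I(t)G_If(t)=\sup_{t\le s<1}\frac{I(t)}{I(s)}\int_0^s f^*(r)\d r\le\|f\|_1$, with no appeal to \eqref{eq:RJeGvX} at all.
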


\begin{proof}
Let $f\in m_{\widetilde I}$ and estimate
\begin{align*}
\|R_I f\|_\infty=\sup_{0<t<1} \frac{t}{I(t)}f^{**}(t)\approx \sup_{0<t<1}\frac{t}{I(t)}f^{*}(t)=\|f\|_{m_{\widetilde I}},
\end{align*} where the approximation follows from the assumption that $\widetilde I$ satisfies the average property.

We take care of $(ii)$ and $(iii)$ in one fell swoop. Let $f_n\to f$ in $L^1$. Then, for every $t\in (0,1)$, we have $$\abs{(R_I f)(t)}\leq \|f\|_\infty \frac{1}{I(t)}\int_0^t \d s=\|f\|_\infty\frac{t}{I(t)} \lesssim \|f\|_\infty,$$ from whence it follows that if $f_n$ is a sequence of bounded functions, then $R_I f_n$ is a sequence of bounded functions. Now,  $$\abs{(R_I f_n)(t)-(R_I f)(t)}=\abs{\frac{1}{I(t)}\int_0^t (f_n(s)-f(s))\d s}\leq \frac{1}{I(t)}\|f-f_n\|_1$$
 and so,
$$\left(s\mapsto \abs{(R_I f_n)(s)-(R_I f)(s)}\right)^*(t)\leq \frac{1}{I(t)}\|f-f_n\|_1.$$
Multiplying through by $I(t)$ and taking supremum over $t\in (0, 1)$ finish the proof.
\end{proof}

\begin{prop}\label{prop:KFunctional2}
    Let $I$ be a quasiconcave function. Then
    \begin{equation*}
        K(f, t, L^1, m_{\widetilde I})\lesssim \int_0^{I^{-1}(t)} f^*(s)\d s+t\sup_{I^{-1}(t)\leq s<1} \frac{s}{I(s)}f^*(s),\quad f\in\mathcal M_+(0, 1), t\in (0, 1).
    \end{equation*}
\end{prop}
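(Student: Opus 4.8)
The plan is to bound the $K$-functional from above by testing it on a single well-chosen decomposition of $f$, namely the optimal decomposition at the point $r\coloneqq I^{-1}(t)\in(0,1)$; here $I^{-1}$ is available because a quasiconcave function is, by our standing convention, a bijection of $(0,1)$ onto itself. Writing $f=f_0+f_1$ for the optimal decomposition of $f$ at $r$ from \eqref{E:f_0-f_1}, so that $f_0^*(s)=\min\{f^*(s),f^*(r)\}$ and $f_1^*(s)=\bigl(f^*(s)-f^*(r)\bigr)\chi_{(0,r)}(s)$, I would place $f_1$ in the $L^1$-slot and $f_0$ in the $m_{\widetilde I}$-slot. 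Since $K(f,t,L^1,m_{\widetilde I})$ is an infimum over all such decompositions, this yields
$$K(f,t,L^1,m_{\widetilde I})\le\|f_1\|_1+t\,\|f_0\|_{m_{\widetilde I}},$$
and it remains to estimate the two summands. One may assume the right-hand side of the asserted inequality is finite, which forces $f^*(r)<\infty$ and $\int_0^r f^*<\infty$, so that $f_1\in L^1$ and $f_0\in m_{\widetilde I}$ are legitimate.

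The first summand is immediate: $\|f_1\|_1=\int_0^r\bigl(f^*(s)-f^*(r)\bigr)\d s\le\int_0^r f^*(s)\d s=\int_0^{I^{-1}(t)}f^*(s)\d s$, exactly the first term on the right.

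For the second summand I would compute $\|f_0\|_{m_{\widetilde I}}=\sup_{0<s<1}\frac{s}{I(s)}f_0^*(s)$ by splitting the supremum at $s=r$. For $s\ge r$ one has $f_0^*(s)=f^*(s)$, so that part is bounded by $\sup_{r\le s<1}\frac{s}{I(s)}f^*(s)$. For $s<r$ one has $f_0^*(s)=f^*(r)$; here quasiconcavity of $I$ makes $\widetilde I(s)=\frac{s}{I(s)}$ nondecreasing, hence $\frac{s}{I(s)}f^*(r)\le\frac{r}{I(r)}f^*(r)\le\sup_{r\le s<1}\frac{s}{I(s)}f^*(s)$. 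Combining the two ranges gives $\|f_0\|_{m_{\widetilde I}}\le\sup_{I^{-1}(t)\le s<1}\frac{s}{I(s)}f^*(s)$, and adding this to the bound for $\|f_1\|_1$ yields the claim (in fact with constant $1$).

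The only point deserving any care — and it is genuinely minor — is the portion $s<r$ in the supremum defining $\|f_0\|_{m_{\widetilde I}}$, where $f_0^*$ has been flattened to the constant $f^*(r)$, so one cannot directly compare with $\sup_{r\le s<1}$; monotonicity of $\widetilde I$ is precisely what transports that estimate back into the admissible range $[r,1)$. Beyond this there is no obstacle: the entire argument rests on choosing the decomposition level $r=I^{-1}(t)$, which is exactly what aligns $\|f_1\|_1$ with $\int_0^{I^{-1}(t)}f^*$ and $t\|f_0\|_{m_{\widetilde I}}$ with $t\sup_{I^{-1}(t)\le s<1}\frac{s}{I(s)}f^*(s)$.
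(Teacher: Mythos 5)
Your proof is correct and follows essentially the same route as the paper: the optimal decomposition of $f$ at the level $I^{-1}(t)$, with $f_1$ placed in the $L^1$-slot and $f_0$ in the $m_{\widetilde I}$-slot. The only (minor) difference is that you estimate $\|f_0\|_{m_{\widetilde I}}$ directly by splitting the supremum at $I^{-1}(t)$ and using monotonicity of $\widetilde I$, which even yields constant $1$, whereas the paper splits $f_0^*$ via the quasi-triangle inequality in $m_{\widetilde I}$ and exploits a cancellation with the term $I^{-1}(t)f^*(I^{-1}(t))$.
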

\begin{proof}
    Let $f\in\mathcal M_+(0, 1)$ and $t\in (0, 1)$ be given. Let $f_0$ and $f_1$ be the optimal decomposition $\eqref{E:f_0-f_1}$ of $f$ at point $I^{-1}(t)$ in place of $t$. Using the rearrangement invariance of both $L^1$ and $m_{\widetilde I}$, we estimate
\begin{align*}
    K(f, t, L^1, X)&\leq \|f_1\|_1+t\|f_0\|_{m_{\widetilde I}}=\|f_1^*\chi_{(0, I^{-1}(t))}\|_1+t\|f_0^*\|_{m_{\widetilde I}}\\
    &=\|f^*\chi_{(0, I^{-1}(t))}\|_1-I^{-1}(t)f^*(I^{-1}(t))+t\|f^*(I^{-1}(t))\chi_{(0, I^{-1}(t))}+f^*\chi_{(I^{-1}(t), 1)}\|_{m_{\widetilde I}}\\
    &\lesssim\|f^*\chi_{(0, I^{-1}(t))}\|_1-I^{-1}(t)f^*(I^{-1}(t))+t\varphi_{m_{\widetilde I}}(I^{-1}(t))f^{*}(I^{-1}(t))+t\|f^*\chi_{(I^{-1}(t), 1)}\|_{m_{\widetilde I}}\\
        &=\|f^*\chi_{(0, I^{-1}(t))}\|_1+t\|f^*\chi_{(I^{-1}(t), 1)}\|_{m_{\widetilde I}}\leq \int_0^{I^{-1}(t)} f^*(s)\d s+t\sup_{I^{-1}(t)\leq s<1} \frac{s}{I(s)}f^*(s).
\end{align*}
\end{proof}

\begin{thm}\label{thm:ZOptimal}
Let $I$ be a quasiconcave function satisfying $\eqref{eq:00001}$ and let $X$ be an r.i.~space. Define $\|f\|_Z=\|S_I f\|_{X'}$ for $f\in\mathcal M_+(0, 1)$. Then 
\begin{equation*}\label{eq:RenormationOfTarget}\|f\|_{Y'_X}=\left\|\frac{1}{I(t)}\int_0^t f^*(s)\d s\right\|_{X'}\approx \left\|\frac{1}{I(t)}\int_0^t f^*(s)\d s\right\|_Z,\quad f\in\mathcal M_+(0, 1)
\end{equation*} and $S_I$ is bounded on $Z$.
\end{thm}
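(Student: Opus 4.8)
The plan is to split the three assertions as follows: the first equality is the standard description of the associate norm of the optimal target space, the equivalence ``$\approx$'' is the only genuinely new content, and the boundedness of $S_I$ on $Z$ is essentially a citation. For the last one, observe that a quasiconcave $I$ satisfies $I(2t)\le 2I(t)\le 2I(2t)$, so $I\in\Delta_2$, and by our standing convention $I$ is an increasing bijection of $(0,1)$; since $X$ is an r.i.~space so is $X'$, hence Theorem~\ref{cor:ZRIQ} applied with $X'$ in place of $X$ gives at once that $\|\cdot\|_Z$ is an r.i.q.~norm (so that $Z$ is a bona fide r.i.q.~space) and that $S_I\colon Z\to Z$. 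The first equality $\|f\|_{Y'_X}=\|R_If^*\|_{X'}$ is the known formula for the associate norm of the optimal target of $X$ under $H_I$ (the special case $X=L^1$ is already used in the proof of Proposition~\ref{lem:INTTargetSpace}), and condition $\eqref{eq:00001}$ enters only through that ambient description; nothing new is required here.

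For the equivalence $\|R_If^*\|_{X'}\approx\|R_If^*\|_Z=\|S_I(R_If^*)\|_{X'}$, the lower estimate is immediate: every $g\in\mathcal M_+(0,1)$ satisfies $g^*\le S_Ig$, so $\|g\|_{X'}=\|g^*\|_{X'}\le\|S_Ig\|_{X'}$, and taking $g=R_If^*$ gives $\|f\|_{Y'_X}=\|R_If^*\|_{X'}\le\|R_If^*\|_Z$.

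The main point is the reverse estimate, and the key idea is to route it through the nonincreasing envelope $G_If$ rather than through $R_If^*$ directly. First, $R_If^*(s)=\frac1{I(s)}\int_0^s f^*\le\sup_{s\le r<1}R_If^*(r)=G_If(s)$ pointwise, and $G_If$ is nonincreasing, so $(R_If^*)^*\le(G_If)^*=G_If$, which forces $S_I(R_If^*)(t)\le\frac1{I(t)}\sup_{0<s\le t}I(s)G_If(s)=S_I(G_If)(t)$. Next comes the absorption identity $S_I(G_If)=G_If$: a short computation shows that $s\mapsto I(s)G_If(s)=\sup_{s\le r<1}\frac{I(s)}{I(r)}\int_0^r f^*$ is nondecreasing on $(0,1)$ --- for $s_1\le s_2$, values $r\ge s_2$ are handled by $I(s_1)\le I(s_2)$, and values $r\in[s_1,s_2)$ by $\frac{I(s_1)}{I(r)}\int_0^r f^*\le\int_0^r f^*\le\int_0^{s_2}f^*$, the latter being $\le G_If(s_2)I(s_2)$ --- so $\sup_{0<s\le t}I(s)G_If(s)=I(t)G_If(t)$ and hence $S_I(G_If)(t)=G_If(t)$. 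Combining these, $\|R_If^*\|_Z=\|S_I(R_If^*)\|_{X'}\le\|G_If\|_{X'}$, and $\eqref{eq:RJeGvX}$, valid for the r.i.~space $X'$, gives $\|G_If\|_{X'}\approx\|R_If^*\|_{X'}=\|f\|_{Y'_X}$, closing the equivalence.

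The only non-routine step is realizing that one must pass to $G_If$: $S_I$ does not act harmlessly on $R_If^*$ in any pointwise sense, but it does fix the nonincreasing majorant $G_If$, and $\eqref{eq:RJeGvX}$ then transfers this back to an $X'$-bound for $R_If^*$. A secondary point to keep straight is that one should invoke Theorem~\ref{cor:ZRIQ} (which needs only $I\in\Delta_2$, a consequence of quasiconcavity that must be spelled out) up front, so that $\|\cdot\|_Z$ is already known to be an r.i.q.~norm and $Z$ a quasi-Banach function space before the stated norm equivalence is even meaningful.
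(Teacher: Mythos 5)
Your proof is correct, but it takes a genuinely different route from the paper's. The paper proves the nontrivial inequality $\|S_I(R_If^*)\|_{X'}\lesssim\|R_If^*\|_{X'}$ by an interpolation argument: it introduces the auxiliary r.i.~norm $\lambda(f)=\|f^{**}(I(t))\|_{X'}$, estimates the $K$-functionals of the couples $(L^1,m_{\widetilde I})$ and $((m_I)_b,L^\infty)$ (Proposition~\ref{prop:KFunctional2} and Lemma~\ref{lem:KFunctional}), uses the endpoint boundedness $R_I\colon m_{\widetilde I}\to L^\infty$ and $R_I\colon L^1\to (m_I)_b$ from Lemma~\ref{lem:BoundednessOfRI} together with Theorem~\ref{thm:SuperInterpolation}, and absorbs the extra term via Theorem~\ref{cor:TIMIsBoundedOnTarget}; this is precisely where the hypothesis \eqref{eq:00001} enters. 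You instead route the estimate through the nonincreasing envelope: $S_I(R_If^*)\le S_I(G_If)\le G_If$ by the absorption property of $G_If$ under $S_I$ (your monotonicity argument for $s\mapsto I(s)G_If(s)$ is exactly the computation the paper performs later in Lemma~\ref{lem:RIFGIFvZ}, and only the inequality $S_I(G_If)\le G_If$ is needed, so the harmless identification of $(G_If)^*$ with $G_If$ causes no trouble), and then you invoke \eqref{eq:RJeGvX} in the r.i.~space $X'$ to return to $\|R_If^*\|_{X'}=\|f\|_{Y_X'}$. This is shorter, avoids the $K$-functional machinery altogether, and in fact never uses \eqref{eq:00001}; the trade-off is that the real depth is outsourced to \eqref{eq:RJeGvX}, i.e.\ to the cited result of Cianchi--Pick--Slavíková, whereas the paper's interpolation proof keeps more of the supremum-operator estimate self-contained (though the paper also relies on \eqref{eq:RJeGvX} elsewhere, e.g.\ in Theorem~\ref{cor:TIMIsBoundedOnTarget} and Lemma~\ref{lem:RIFGIFvZ}, so your use of it is consistent with the paper's framework). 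Your treatment of the remaining assertions matches the paper's implicit one: the trivial direction from $g^*\le S_Ig$, the first equality as the known description of $\|\cdot\|_{Y_X'}$ from the optimal-target construction, and the boundedness of $S_I$ on $Z$ via Theorem~\ref{cor:ZRIQ}, with the correct observation that quasiconcavity (plus the paper's standing convention that $I$ is a bijection) supplies the $\Delta_2$ and increasing-bijection hypotheses.
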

\begin{proof} We define three functionals
\begin{align*}
\lambda(f)&\coloneqq \|f^{**}(I(t))\|_{X'},\\
\alpha(f)&\coloneqq \lambda(k(f, t, L^1, m_{\widetilde I}))\\
\mbox{ and }\\
\beta(f)&\coloneqq \lambda(k(f, t, (m_I)_b, L^\infty))
\end{align*} for $f\in\mathcal M_+(0, 1)$. We check that $\lambda$ is an r.i.~norm. The triangle inequality follows from the subadditivity of $f\mapsto f^{**}$ and the triangle inequality of $\|\cdot\|_{X'}$. The rest of (P1) obviously holds. The same goes for (P2). When $0\leq f_n\nearrow f$, then $f_n^{**}\nearrow f^{**}$ and so $f_n^{**}(I(t))\nearrow f^{**}(I(t))$ for every $t\in (0, 1)$. Hence (P3) for $\lambda$ follows from (P3) of $X'$. Regarding (P4), we note that the maximal nonincreasing rearrangement of a constant function is the function itself, and so the required property follows from its counterpart in $X'$. As for (P5), we estimate
\begin{align*}
\lambda(f)&=\|f^{**}(I(t))\|_{X'}\geq \left\|f^{**}(I(t))\chi_{(0, \frac{1}{2})}(t)\right\|_{X'}\geq \left\|f^{**}\left(I\left(\frac{1}{2}\right)\right)\chi_{(0, \frac{1}{2})}\right\|_{X'}\\
	&\approx f^{**}\left(I\left(\frac{1}{2}\right)\right)\approx f^{**}\left(\frac{1}{2}\right)\approx \|f\|_1,\quad f\in\mathcal M_+(0, 1).
\end{align*}
As $\lambda$ is obviously rearrangement invariant, depending only on the nonincreasing rearrangement of $f$, we conclude that $\lambda$ is an r.i.~norm.

In general, if we have a compatible couple of quasi-Banach spaces $(X_0, X_1)$ such that $X_0\cap X_1$ is dense in $X_0$, Theorem~\ref{thm:KFunctionalDensity} combined with the definition of $\lambda$ give us
\begin{equation}\label{eq:KFunctionalLambda}
\begin{split}
        \lambda(k(f, t, X_0, X_1))&=\left\|\frac{1}{I(t)}\int_0^{I(t)}k(f, t, X_0, X_1)\right\|_{X'}      =\left\|\frac{1}{I(t)}K(f, I(t), X_0, X_1)\right\|_{X'}.
\end{split}
\end{equation}

Since $L^1\cap m_{\widetilde I}$ is dense in $L^1$, by Proposition~\ref{prop:KFunctional2} we have
\begin{equation}\label{eq:3.2.2}
\begin{split}
    \alpha(f)&=\left\|\frac{1}{I(t)}K(f, I(t), L^1, m_{\widetilde I})\right\|_{X'}\leq \left\|\frac{1}{I(t)}\left(\int_0^t f^*(s)\d s+I(t)\sup_{t\leq s<1} \frac{s}{I(s)}f^*(s)\right)\right\|_{X'}\\
    &\leq\|f\|_{Y_X'}+\left\|\frac{t}{I(t)}(T_I f)\right\|_{X'}\lesssim \|f\|_{Y_X'},
\end{split}
\end{equation} where the last estimate comes from Theorem~\ref{cor:TIMIsBoundedOnTarget}.

Next, $(m_I)_b\cap L^\infty$ is dense in $(m_I)_b$. Using $\eqref{eq:KFunctionalLambda}$ once more we have
\begin{equation}\label{eq:3.2.3}
\begin{split}
    \beta(f)&=\left\|\frac{1}{I(t)}K(f, I(t), (m_I)_b, L^\infty)\right\|_{X'}\\
        &\geq \left\|\frac{1}{I(t)}K(f, I(t), m_I, L^\infty)\right\|_{X'}\approx\left\|\frac{1}{I(t)}\sup_{0<s\leq t}I(s)f^*(s)\right\|_{X'}=\|f\|_Z,
\end{split}
\end{equation} where the first inequality follows from enlarging the space $(m_I)_b$ to $m_I$ and hence allowing more decompositions. The approximation is an application of Lemma~\ref{lem:KFunctional}.

Adding Lemma~\ref{lem:BoundednessOfRI} to the kettle we see that all the assumptions of Theorem~\ref{thm:SuperInterpolation} are satisfied. Therefore
\begin{equation}\label{eq:3.2.1}
\beta\left(R_I f^*\right)\lesssim\alpha(f^*).
\end{equation}
Finally, chaining $\eqref{eq:3.2.3}$, $\eqref{eq:3.2.1}$ and $\eqref{eq:3.2.2}$ together, we arrive at
\begin{equation}\label{eq:alphabetaOptimal}
    \|R_I f^*\|_Z\lesssim\beta(R_I f^*)\lesssim\alpha(f^*)\lesssim \|f\|_{Y_X'}.
\end{equation}
 This finishes the proof, as the reverse inequality holds trivially.
\end{proof}

We now introduce two functionals and exhibit their equivalence.
\begin{lemma}\label{lem:ConcaveNorm}
Let $I$ be a quasiconcave function and let $X$ be an r.i. space. Define a~functional $\|f\|_Z=\|S_I f\|_X, f\in\mathcal M_+(0, 1)$. Then
\begin{equation}\label{eq:ConcaveNorm}
\mu(f)\coloneqq\sup_{\substack{\|g\|_Z\leq 1 \\\|g\|_\infty <\infty}}\int_0^1 f^*(t) \d \csup{0<s\leq t}I(s)g^*(s)+\|f\|_1,\quad f\in\mathcal M_+(0, 1),
\end{equation} is an r.i.~norm. Here, $\csup{0<s\leq t}\varphi(s)$ stands for the least concave majorant of $t\mapsto \sup_{0<s\leq t}\varphi(s)$.
\end{lemma}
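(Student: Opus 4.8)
The plan is to verify the six defining axioms of an r.i.\ norm for $\mu$, after recording the structural properties of the weight appearing inside it, namely $\Phi_g:=\csup{0<s\leq t}I(s)g^*(s)$, the least concave majorant of $G_g(t):=\sup_{0<s\leq t}I(s)g^*(s)=I(t)(S_Ig)(t)$. Since $G_g$ is nondecreasing and $G_g(t)\leq\sup_{0<s<1}I(s)g^*(s)=\|g\|_{m_I}$, its least concave majorant $\Phi_g$ is nonnegative, nondecreasing and satisfies $\Phi_g\leq\|g\|_{m_I}$ on $(0,1)$. Being concave on the open interval, $\Phi_g$ is locally absolutely continuous there, so, interpreting $\int_0^1(\cdot)\d\Phi_g$ as integration over $(0,1)$, we have $\d\Phi_g=\Phi_g'(t)\,\d t$ with a density that is nonnegative (monotonicity) and nonincreasing (concavity); in particular $\d\Phi_g$ is a positive measure of total mass at most $\|g\|_{m_I}$.

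Homogeneity, the implication $\mu(f)=0\Rightarrow f=0$ (via $\mu(f)\geq\|f\|_1$, obtained by taking $g=0$), and rearrangement invariance (P6) are immediate, since $\mu$ is assembled from $f^*$ alone, scales linearly in $f^*$, and dominates $\|f\|_1=\int_0^1 f$, which simultaneously gives (P5). Monotonicity (P2) follows because $f_1\leq f_2$ forces $f_1^*\leq f_2^*$, hence $\int_0^1 f_1^*\d\Phi_g\leq\int_0^1 f_2^*\d\Phi_g$ for every admissible $g$ and $\|f_1\|_1\leq\|f_2\|_1$. For the Fatou property (P3), if $f_n\nearrow f$ then $f_n^*\nearrow f^*$, and two applications of monotone convergence (with respect to $\d\Phi_g$ and to Lebesgue measure) give $\int_0^1 f_n^*\d\Phi_g+\|f_n\|_1\nearrow\int_0^1 f^*\d\Phi_g+\|f\|_1$ for each $g$; since suprema commute, $\sup_n\mu(f_n)=\mu(f)$, and the sequence is nondecreasing by (P2).

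The triangle inequality is the substantive point, and the reason the concavification is built into the definition at all. It suffices to prove that for each fixed admissible $g$ the functional $f\mapsto\int_0^1 f^*\d\Phi_g$ is subadditive on rearrangements, for then $\mu$, being a supremum of such functionals plus the subadditive $\|\cdot\|_1$, is subadditive. Given $f_1,f_2$ (we may assume $\mu(f_i)<\infty$, so $f_i\in L^1$ and all integrals below are finite), set $w:=f_1^*+f_2^*$, which is nonincreasing with $w^{**}=f_1^{**}+f_2^{**}\geq(f_1+f_2)^{**}$ by subadditivity of $f\mapsto f^{**}$. Applying Hardy's lemma $\eqref{eq:HardyLemma}$ with the nonincreasing weight $\Phi_g'$,
\begin{equation*}
\int_0^1(f_1+f_2)^*\d\Phi_g=\int_0^1(f_1+f_2)^*\Phi_g'\d t\leq\int_0^1 w\,\Phi_g'\d t=\int_0^1 f_1^*\d\Phi_g+\int_0^1 f_2^*\d\Phi_g,
\end{equation*}
and combining this with $\|f_1+f_2\|_1\leq\|f_1\|_1+\|f_2\|_1$ and taking the supremum over admissible $g$ yields $\mu(f_1+f_2)\leq\mu(f_1)+\mu(f_2)$.

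It remains to check (P4), that $\mu(\chi_{(0,1)})<\infty$. Since $\mu(\chi_{(0,1)})=\sup_g\int_0^1\d\Phi_g+1\leq\sup_g\|g\|_{m_I}+1$, the task reduces to bounding $\|g\|_{m_I}$ uniformly over $\{g:\|S_Ig\|_X\leq1\}$, and here I expect the one mild trick of the argument. By property (P5) for $X$, $\|S_Ig\|_1\lesssim\|S_Ig\|_X\leq1$; since $S_Ig$ is nonincreasing (Lemma~\ref{lem:SMonotone}), this gives $(S_Ig)(t)\leq\frac1t\int_0^t S_Ig\lesssim\frac1t$, whence $G_g(t)=I(t)(S_Ig)(t)\lesssim I(t)/t$. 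Because $G_g$ is nondecreasing while $I(t)/t\to I(1^-)=1$ as $t\to1^-$, we conclude $\|g\|_{m_I}=\sup_{0<t<1}G_g(t)=\lim_{t\to1^-}G_g(t)\lesssim1$, so $\mu(\chi_{(0,1)})<\infty$ and $\mu$ is an r.i.\ norm. The only genuinely delicate step is the triangle inequality; everything else is bookkeeping, with the mild subtlety in (P4) that one must exploit the normalization $I(1^-)=1$ at the right endpoint rather than the useless global bound $\sup_{0<t<1}I(t)/t$.
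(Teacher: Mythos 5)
Your proof is correct and follows essentially the same route as the paper's: both rest on writing $\d\Phi_g$ via the nonincreasing derivative of the (nondecreasing, finite) least concave majorant, deducing the triangle inequality from subadditivity of $f\mapsto f^{**}$ together with Hardy's lemma, getting (P3) from monotone convergence, and bounding the total mass of $\d\Phi_g$ by $\|g\|_{m_I}$ for (P4). The only cosmetic difference is in (P4): the paper simply invokes the embedding $Z\emb m_I$ (which follows since $S_Ig\geq \|g\|_{m_I}$ pointwise, as $1/I\geq 1$), whereas you obtain the uniform bound on $\|g\|_{m_I}$ via property (P5) of $X$, the monotonicity of $S_Ig$, and the behavior of $I(t)/t$ near $t=1$ -- slightly more roundabout, but equally valid.
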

\begin{proof}
Denote $h_g(t)=\csup{0<s\leq t}I(s)g^*(s)$ for $g\in\mathcal M_+(0, 1)$. Let us observe that $t\mapsto \sup_{0<s\leq t}I(s)g^*(s)$ is a quasiconcave map for every $g\in Z$ and so $h_g$ is a finite concave function. Indeed, for such functions $g$ the supremum is finite, as $Z\subset m_I$, and with increasing $t$ the supremum is nondecreasing. It thus remains to check that for $0<t_1<t_2<1$ the inequality $$\frac{\sup_{0<s\leq t_2}I(s)g^*(s)}{t_2}\leq \frac{\sup_{0<s\leq t_1}I(s)g^*(s)}{t_1}$$ holds. We calculate
\begin{align*}
    \frac{\sup_{t_1<s\leq t_2}I(s)g^*(s)}{t_2}&\leq g^*(t_1)\frac{\sup_{0<s\leq t_1}I\left(\frac{t_2}{t_1}\cdot s\right)}{\frac{t_2}{t_1}\cdot t_1}\\
    &\leq\frac{\sup_{0<s\leq t_1}I(s)g^*(t_1)}{t_1}\leq \frac{\sup_{0<s\leq t_1}I(s)g^*(s)}{t_1},
\end{align*} where in the second inequality we used the quasiconcavity of $I$. For $\|g\|_Z\leq 1$ we therefore have
$$\int_0^1 f^*(t) \d h_g(t)=\int_0^1 f^*(t) \frac{{\rm d} h_g(t)}{{\rm d} t}\d t.$$ As $t\mapsto \frac{{\rm d} h_g(t)}{{\rm d} t}$ is nonincreasing, Hardy's lemma $\eqref{eq:HardyLemma}$ gives us a triangle inequality of the functional $\mu$. Property (P2) is obvious. (P3) follows from the monotone convergence theorem. To check (P4), let $\|g\|_Z\leq 1$ be given. Using the fact that $Z\emb m_I$, we estimate
\begin{align*}
    \int_0^1 \d \csup{0<s\leq t}I(s)g^*(s)\leq \csup{0<s<1}I(s)g^*(s)\approx \sup_{0<s<1}I(s)g^*(s)=\|g\|_{m_I}\lesssim \|g\|_Z\leq 1.
\end{align*} Thus $\mu(\chi_{(0, 1)})<\infty$. Regarding (P5), $\|f\|_1\leq \mu(f), f\in\mathcal M_+(0, 1)$, follows from the definition of $\mu$. Finally, $\mu$ is rearrangement invariant, as the first expression in its definition $\eqref{eq:ConcaveNorm}$ depends only on the nonincreasing rearrangement of a function and the second term, $\|\cdot\|_1$, is rearrangement invariant.
\end{proof}

\begin{prop}\label{prop:LRIN}
     Let $I$ be a quasiconcave function and let $X$ be an r.i.~space. Put $\|f\|_Z\coloneqq \|S_I f\|_X$ for $f\in\mathcal M_+(0, 1)$. Then the functional $\lambda$ on $\mathcal M_+(0, 1)$ defined by
\begin{equation*}
    \lambda(f)=\sup_{\|g\|_Z\leq 1}\int_0^1-I(t)g^*(t)\d f^*(t)+\|f\|_1,\quad f\in\mathcal M_+(0, 1),
\end{equation*} is equivalent to an r.i.~norm.
\end{prop}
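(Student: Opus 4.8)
The plan is to show that $\lambda$ is equivalent to the functional $\mu$ from Lemma~\ref{lem:ConcaveNorm}; since that lemma already establishes $\mu$ to be an r.i.~norm, this yields the assertion. Fix $f\in\mathcal M_+(0,1)$, and for $g\in\mathcal M_+(0,1)$ with $\|g\|_Z\le1$ put
\[
\phi_g(t)=\sup_{0<s\le t}I(s)g^*(s),\qquad h_g(t)=\csup{0<s\le t}I(s)g^*(s),\qquad t\in(0,1).
\]
Two observations from the proof of Lemma~\ref{lem:ConcaveNorm} will be used freely: $\phi_g(t)=I(t)(S_Ig)(t)$, so $\phi_g$ is quasiconcave and therefore $\phi_g\le h_g\le 2\phi_g$ with $h_g$ concave, hence continuous, on $(0,1)$; and $Z\emb m_I$, whence $h_g(1^-)\le 2\phi_g(1^-)=2\|g\|_{m_I}\lesssim\|g\|_Z\le1$.

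\emph{Step 1 ($\lambda\le\mu$).} Assume first that $g$ is bounded. Then $\phi_g(t)\le\|g\|_\infty I(t)\to0$ as $t\to0^+$, so $h_g(0^+)=0$; since $f^*$ is nonincreasing and $h_g$ is absolutely continuous with $h_g(0^+)=0$, this forces $f^*(\eps)h_g(\eps)\le\int_0^\eps f^*(s)\,dh_g(s)$ for all small $\eps>0$. Integrating by parts on $(\eps,1-\eps)$ — permissible because $h_g$ is continuous — dropping the nonpositive boundary contribution at $1-\eps$, and letting $\eps\to0^+$, I obtain
\begin{equation*}
\int_0^1 -I(t)g^*(t)\,df^*(t)\ \le\ \int_0^1 h_g(t)\,\bigl(-df^*(t)\bigr)\ \le\ \int_0^1 f^*(t)\,dh_g(t)\ \le\ \mu(f)-\|f\|_1,
\end{equation*}
the first inequality using $I(t)g^*(t)\le\phi_g(t)\le h_g(t)$ and $-df^*\ge0$. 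For arbitrary $g$ with $\|g\|_Z\le1$ apply this to the truncations $g_n=\min\{g,n\}$, which satisfy $\|g_n\|_Z\le\|g\|_Z$ by (P2), and let $n\to\infty$ by monotone convergence. Taking the supremum over all such $g$ gives $\lambda(f)\le\mu(f)$.

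\emph{Step 2 ($\mu\lesssim\lambda$).} It suffices to dominate $\int_0^1 f^*\,dh_g$ by a fixed multiple of $\lambda(f)$ for every bounded $g$ with $\|g\|_Z\le1$ (the case $\int_0^1 f^*\,dh_g=+\infty$ is vacuous once the bound on $\int_0^1 h_g(-df^*)$ below is in hand). Integrating by parts as above but retaining the boundary term at $1^-$,
\begin{equation*}
\int_0^1 f^*(t)\,dh_g(t)=f^*(1^-)\,h_g(1^-)+\int_0^1 h_g(t)\,\bigl(-df^*(t)\bigr).
\end{equation*}
The first summand is $\lesssim f^*(1^-)\le\|f\|_1\le\lambda(f)$ since $h_g(1^-)\lesssim1$. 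For the second, $h_g\le2\phi_g=2I\cdot(S_Ig)$, and $S_Ig$ is nonincreasing by Lemma~\ref{lem:SMonotone}, so there is $\widetilde g\in\mathcal M_+(0,1)$ with $\widetilde g^*=S_Ig$; by the boundedness of $S_I$ on $Z$ from Theorem~\ref{cor:ZRIQ} together with the rearrangement invariance of $Z$, $\|\widetilde g\|_Z=\|S_Ig\|_Z\le\|g\|_Z\le1$, hence
\begin{equation*}
\int_0^1 h_g(t)\,\bigl(-df^*(t)\bigr)\ \le\ 2\int_0^1 -I(t)\widetilde g^*(t)\,df^*(t)\ \le\ 2\bigl(\lambda(f)-\|f\|_1\bigr)\ \le\ 2\lambda(f).
\end{equation*}
Combining and taking the supremum over $g$ yields $\mu(f)\lesssim\lambda(f)$, so $\lambda\approx\mu$ and the proposition follows.

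\emph{Main difficulty.} The only nontrivial issue is the behaviour of the Stieltjes integration by parts at the endpoint $0$. It is handled by the two facts that $h_g(0^+)=0$ for bounded $g$ — which makes the $0$-endpoint boundary term vanish whenever $\int_0^1 f^*\,dh_g$ converges, and is harmless otherwise — and that $\phi_g=I\cdot S_Ig$, which lets one trade the concave envelope $h_g$ for a legitimate test function for $\lambda$ at the price of a constant factor via $\|S_Ig\|_Z\le\|g\|_Z$ from Theorem~\ref{cor:ZRIQ}.
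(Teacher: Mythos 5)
Your proof is correct and rests on the same pillars as the paper's: both arguments establish $\lambda\approx\mu$ with $\mu$ the functional of Lemma~\ref{lem:ConcaveNorm}, both trade the concave envelope $\csup{0<s\le t}I(s)g^*(s)$ for $I\cdot S_Ig$ at the cost of a factor $2$, and both exploit $\|S_Ig\|_Z\le\|g\|_Z$ (Theorem~\ref{cor:ZRIQ}, Lemma~\ref{lem:SMonotone}) to pass between test functions $g$ and $S_Ig$. Where you genuinely diverge is in the treatment of the endpoints: the paper first proves the equivalence for $f$ with $f^*(0+)<\infty$ and $f^*(1-)=0$, where the integration by parts is an exact identity, and then runs two approximation steps (truncation of $f^*$ near $0$ along continuity points, then subtraction of the constant $f^*(1-)$), whereas you handle arbitrary $f$ in one pass by one-sided estimates: the $0$-endpoint is controlled by $h_g(0^+)=0$ together with $f^*(\eps)h_g(\eps)\le\int_0^\eps f^*\,{\rm d}h_g$, and the $1$-endpoint boundary term is absorbed into $\|f\|_1\le\lambda(f)$ via $h_g(1^-)\lesssim\|g\|_{m_I}\lesssim\|g\|_Z$, i.e.\ via $Z\emb m_I$; you approximate only in $g$ (truncation $\min\{g,n\}$), which the paper also does implicitly. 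Your version buys a shorter, case-free argument; the paper's version buys exact identities at the price of the three-stage reduction in $f$. Two small points to tidy up: in Step 2 the displayed ``equality'' is in general only the inequality $\int_0^1 f^*\,{\rm d}h_g\le f^*(1^-)h_g(1^-)+\int_0^1 h_g(-{\rm d}f^*)$ when $f^*(0+)=\infty$ (the discarded term $-f^*(\eps)h_g(\eps)$ need not vanish in the limit), but that is exactly the direction you use; and the existence of $\widetilde g$ with $\widetilde g^*=S_Ig$ tacitly uses that $S_Ig$ is continuous, which does hold here because a quasiconcave $I$ is continuous on $(0,1)$ and hence $t\mapsto\sup_{0<s\le t}I(s)g^*(s)$ is continuous --- the paper makes the same identification without comment, so this is not a gap, merely worth a half-sentence.
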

\begin{proof}
    We show that $\lambda\approx \mu$, where $\mu$ is the functional from Lemma~\ref{lem:ConcaveNorm}. First observe that, by the monotone convergence theorem, it suffices to consider only bounded functions over which we take the supremum in the definition of the functional $\lambda$. Also, without loss of generality, it suffices to consider only the functions which are finite a.e.
    
    We show their equivalence in three steps. Let first $f\in\mathcal M_+(0, 1)$ be such that $f^*(0+)<\infty$ and $f^*(1-)=0$ and pick $g\in\mathcal M_+(0, 1)$ bounded such that $\|g\|_Z\leq 1$. Then $$\lim_{t\to 0^+} f^*(t) \csup{0<s\leq t}I(s)g^*(s)=\lim_{t\to 1^-} f^*(t) \csup{0<s\leq t}I(s)g^*(s)=0.$$ Consequently, integration by parts yields 
\begin{align*}
    \int_0^1 f^*(t)\d \csup{0<s\leq t} I(s) g^*(s)=\int_0^1-\csup{0<s\leq t}I(s)g^*(s) \d f^*(t).
\end{align*} We therefore have
\begin{align*}
    &\sup_{\substack{\|g\|_Z\leq 1\\\|g\|_\infty<\infty}}\int_0^1 f^*(t)\d \csup{0<s\leq t} I(s) g^*(s)= \sup_{\substack{\|g\|_Z\leq 1\\\|g\|_\infty<\infty}}\int_0^1-\csup{0<s\leq t}I(s)g^*(s) \d f^*(t)\\
    \approx&\sup_{\substack{\|g\|_Z\leq 1\\\|g\|_\infty<\infty}}\int_0^1 -\sup_{0<s\leq t}I(s)g^*(s)\d f^*(t)=\sup_{\substack{\|g\|_Z\leq 1\\\|g\|_\infty<\infty}}\int_0^1 -I(t) (S_Ig)(t)\d f^*(t)\\
    =&\sup_{\substack{\|S_I g\|_Z\leq 1\\\|g\|_\infty<\infty}}\int_0^1 -I(t) (S_Ig)(t)\d f^*(t)=\sup_{\substack{\|g\|_Z\leq 1\\\|g\|_\infty<\infty}}\int_0^1 -I(t)g^*(t)\d f^*(t),
\end{align*} and so $\mu(f)\approx \lambda(f)$. Here we used the fact that $\|g\|_Z=\|S_Ig\|_Z$ for every $g\in\mathcal M_+(0, 1)$ and Theorem~\ref{thm:bSd2}, \emph{(i)}, which says that if $g$ is bounded, so is $S_Ig$.

Second, let $f\in\mathcal M_+(0, 1)$ satisfy $f^*(1-)=0$. Let $\{t_n\}\in (0, 1)^\Nat$ be a sequence of points of continuity of $f^*$ such that $t_n\searrow 0$. For every $n\in\Nat$ we set $f_n=f_n^*=\min\{ f^*, f^*(t_n)\}$. We now infer that $\lambda(f_n)\nearrow \lambda(f)$. Indeed, by the monotone convergence theorem we have
\begin{align*}
\lambda(f_n)&=\sup_{\|g\|_Z\leq 1}\int_0^1 -I(t)g^*(t)\d f_n^*(t)+\|f_n\|_1\\
&=\sup_{\|g\|_Z\leq 1}\int_{t_n}^1-I(t)g^*(t)\d f^*(t)+\|f_n\|_1
\nearrow \lambda(f).
\end{align*}

Finally, let $f\in\mathcal M_+(0, 1)$ be such that $f^*(1-)<\infty$. Then, using the previous step, we have
\begin{align*}
    \lambda(f)&=\sup_{\|g\|_Z\leq 1}\int_0^1 -I(t)g^*(t)\d f^*(t)+\|f\|_1\\
    &=\sup_{\|g\|_Z\leq 1}\int_0^1 -I(t)g^*(t)\d (f^*-f^*(1-))(t)+\|f^*-f^*(1-)\|_1+f^*(1-)\\
    &\approx \mu(f^*-f^*(1-))+f^*(1-)\approx\mu(f),
\end{align*} since
\begin{align*}
    \mu(f^*-f^*(1-))+f^*(1-)&\leq \mu(f)+\mu(f^*(1-))+f^*(1-)\\&\approx \mu(f)+\|f^*(1-)\|_1
        \leq \mu(f)+\|f\|_1\approx\mu(f)
\end{align*} and
$$\mu(f)\leq \mu(f^*-f^*(1-))+\mu(f^*(1-))\approx\mu(f-f^*(1-))+f^*(1-).$$
\end{proof}

\begin{rem}
Both functionals $\mu$ and $\lambda$ contain $\|\cdot\|_1$ in their definitions. For the functional $\mu$ it guarantees that $X(\mu)\emb L^1$, while for $\lambda$ it guarantees that $\lambda(f)=0\iff f=0$ a.e.
\end{rem}

Before we establish an alternative description of the optimal target space norm, we require a technical lemma which extends $\eqref{eq:RJeGvX}$ to our setting.

\begin{lemma}\label{lem:RIFGIFvZ}
    Let $I$ be a quasiconcave function and $X$ be an r.i.~space. Put $\|f\|_Z=\|S_If\|_X, f\in\mathcal M_+(0, 1)$. Then 
    \begin{equation*}
        \|R_I f^*\|_Z\approx \|G_I f\|_Z,\quad f\in\mathcal M_+(0, 1).
    \end{equation*}
\end{lemma}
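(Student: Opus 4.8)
The plan is to prove the two estimates separately, the inequality $\|R_I f^*\|_Z\le\|G_I f\|_Z$ being essentially free and the reverse one requiring a structural observation about $G_I f$. For the easy direction, note that $R_I f^*(t)\le G_I f(t)$ for every $t\in(0,1)$, since $G_I f(t)$ is by definition the supremum of $R_I f^*$ over the tail $[t,1)$; applying the monotonicity of $S_I$ (recorded after Definition~\ref{def:SIaTI}) followed by property (P2) of $X$ gives $\|S_I(R_I f^*)\|_X\le\|S_I(G_I f)\|_X$, i.e. $\|R_I f^*\|_Z\le\|G_I f\|_Z$.

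For the reverse inequality, the first and main step I would carry out is to show that the function $t\mapsto I(t)\,G_I f(t)$ is nondecreasing on $(0,1)$. Fixing $0<s\le t<1$ and writing $I(s)\,G_I f(s)=\sup_{s\le r<1}\frac{I(s)}{I(r)}\int_0^r f^*$, I would split the supremum: for $r\ge t$ one uses $I(s)\le I(t)$ to bound the term by $I(t)\,G_I f(t)$ directly; for $s\le r<t$ one uses $I(s)\le I(r)$ to get $\frac{I(s)}{I(r)}\int_0^r f^*\le\int_0^t f^*=I(t)\,R_I f^*(t)\le I(t)\,G_I f(t)$. Granted this, and using that $G_I f$ is nonincreasing (so it agrees with its own rearrangement), the supremum defining $S_I(G_I f)$ collapses to the endpoint, $S_I(G_I f)(t)=\frac1{I(t)}\sup_{0<s\le t}I(s)\,G_I f(s)=G_I f(t)$, whence $\|G_I f\|_Z=\|S_I(G_I f)\|_X=\|G_I f\|_X$.

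It then remains to bound $\|G_I f\|_X$ by $\|R_I f^*\|_Z$. Since $S_I h\ge h^*$ pointwise for every $h$ (again from the remark after Definition~\ref{def:SIaTI}), properties (P2) and (P6) of $X$ give $\|R_I f^*\|_Z=\|S_I(R_I f^*)\|_X\ge\|(R_I f^*)^*\|_X=\|R_I f^*\|_X$, and the classical equivalence $\eqref{eq:RJeGvX}$ turns this into $\|R_I f^*\|_Z\gtrsim\|R_I f^*\|_X\approx\|G_I f\|_X=\|G_I f\|_Z$, closing the loop. I do not anticipate a genuine obstacle: the one delicate point is the monotonicity of $t\mapsto I(t)\,G_I f(t)$, on which everything else rests, but it reduces to the elementary two-case estimate above.
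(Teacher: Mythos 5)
Your proposal is correct and follows essentially the same route as the paper: the core step in both is the identity $S_I(G_If)=G_If$ (your monotonicity of $t\mapsto I(t)G_If(t)$ is just a reformulation of the paper's direct evaluation of the iterated suprema, with the same two-case split $r\le t$ versus $r\ge t$), after which both conclude via $\eqref{eq:RJeGvX}$ and the pointwise bounds $h^*\le S_Ih$ and $R_If^*\le G_If$.
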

\begin{proof}
    We show that $S_I G_If=G_I f$ for every $f\in\mathcal M_+(0, 1)$. To this end, fix $f\in\mathcal M_+(0, 1)$ and $t\in (0, 1)$. We calculate
\begin{align*}
    (S_I G_I f)(t)&=\frac{1}{I(t)}\sup_{0<s\leq t}I(s)\sup_{s\leq r<1}\frac{1}{I(r)}\int_0^r f^*(z)\d z\\
    &=\frac{1}{I(t)}\sup_{0<s\leq t}\sup_{s\leq r<1}I(s)\frac{1}{I(r)}\int_0^r f^*(z)\d z\\
    &=\frac{1}{I(t)}\max\left\{\sup_{0<s\leq t}\sup_{s\leq r\leq t}\frac{I(s)}{I(r)}\int_0^r f^*(z)\d z, \sup_{0<s\leq t}\sup_{t\leq r<1}\frac{I(s)}{I(r)}\int_0^r f^*(z)\d z\right\}\\
    &=\frac{1}{I(t)}\max\left\{\sup_{0<r\leq t}\sup_{0<s\leq r}\frac{I(s)}{I(r)}\int_0^r f^*(z)\d z, \sup_{t\leq r<1}\frac{I(t)}{I(r)}\int_0^r f^*(z)\d z\right\}\\
    &=\frac{1}{I(t)}\max\left\{\sup_{0<r\leq t}\int_0^r f^*(z)\d z, \sup_{t\leq r<1}\frac{I(t)}{I(r)}\int_0^r f^*(z)\d z\right\}\\
    &=\frac{1}{I(t)}\max\left\{\frac{I(t)}{I(t)}\int_0^t f^*(z)\d z, \sup_{t\leq r<1}\frac{I(t)}{I(r)}\int_0^r f^*(z)\d z\right\}\\
    &=\max\left\{\frac{1}{I(t)}\int_0^t f^*(z)\d z, \sup_{t\leq r<1}\frac{1}{I(r)}\int_0^r f^*(z)\d z\right\}\\
    &=\sup_{t\leq r<1} \frac{1}{I(r)}\int_0^r f^*(z)\d z=G_If (t).
\end{align*} Therefore, by $\eqref{eq:RJeGvX}$,
\begin{align*}
    \|G_If \|_Z=\|S_I G_I f\|_X=\|G_I f\|_X\approx \|R_I f^*\|_X\leq\|S_I R_I f^*\|_X= \|R_I f^*\|_Z.
\end{align*}
We know $R_I f^*\leq G_I f$ and so $S_I R_I f^*\leq S_I G_I f$. Furnishing this last inequality with $\|\cdot\|_X$ finishes the proof.
\end{proof}

We will now use results of the last two sections to prove Theorem~\ref{thm:Norm1}.
\begin{proof}[Proof of Theorem~\ref{thm:Norm1}]
Put $\|f\|_Z=\|S_I f\|_{X'}$ for $f\in\mathcal M_+(0, 1)$. We need to show that $\|f\|_{Y_X}\approx \lambda(f)$ for $f\in\mathcal M_+(0, 1)$, where $\lambda$ is the functional from Proposition~\ref{prop:LRIN}. Let $f,g \in\mathcal M_+(0, 1)$ be given and assume $f^*(0+)<\infty$ and $f^*(1-)=0$. Then

\begin{align*}
    \int_0^1 f^*(s)g^*(s)\d s&=\int_0^1 -g^*(s)\int_s^1 {\rm d} f^*(t)\d s=\int_0^1-\int_0^t g^*(s)\d s\d f^*(t)\\
    &=\int_0^1-\frac{I(t)}{I(t)}\int_0^t g^*(s)\d s\d f^*(t)\leq\int_0^1 -I(t)(G_I g)(t) \d f^*(t)\\
    &\approx\frac{\|g\|_{Y_X'}}{\left\|\frac{1}{I(t)}\int_0^t g^*\right\|_Z}\int_0^1-I(t)(G_I g)(t)\d f^*(t)\\
    &\approx \frac{\|g\|_{Y_X'}}{\left\|G_I g\right\|_Z}\int_0^1-I(t)(G_I g)(t)\d f^*(t)\\
    &\leq \|g\|_{Y_X'}\cdot\sup_{\|g\|_Z\leq 1}\int_0^1-I(t)g^*(t)\d f^*(t)\leq\|g\|_{Y_X'}\cdot\lambda(f),
\end{align*} where the first approximation is Theorem~\ref{thm:ZOptimal} and in the second approximation we used Lemma~\ref{lem:RIFGIFvZ}. Dividing by $\|g\|_{Y_X'}$ and taking supremum over $g\in\mathcal M_+(0, 1)$ with $\|g\|_{Y_X'}\leq 1$ gives us $\|f\|_{Y_X}\lesssim \lambda(f)$.

In the other direction, let $f\in\mathcal M_+(0, 1)$ be arbitrary. As $$\|f\|_{Y_X'}=\left\|\frac{1}{I(t)}\int_0^t f^*\right\|_{X'},$$ there exists $h\in\mathcal M_+(0, 1)$ with $\|h\|_X\leq 1$ and $$\frac{1}{2}\|f\|_{Y_X'}\leq\int_0^1 \frac{h(t)}{I(t)}\int_0^t f^*(s)\d s\d t.$$ Put $g(t)=g^*(t)=\int_t^1\frac{h(s)}{I(s)}\d s$. For $k\in\mathcal M_+(0, 1)$ satisfying $\|k\|_Z\leq 1$ we have
\begin{align*}
    \int_0^1 -I(t)k^*(t)\d g^*(t)&=\int_0^1 I(t)k^*(t) \frac{h(t)}{I(t)}\d t\\
    &=\int_0^1 k^*(t) h(t)\d t\leq \|k\|_{X'}\|h\|_X\leq \|k\|_Z\|h\|_X\leq 1.
\end{align*} By Fubini's theorem we estimate $$\int_0^1 g^*(t)\d t=\int_0^1\int_t^1 \frac{h(s)}{I(s)}\d s\d t=\int_0^1 h(s) \frac{s}{I(s)}\d s\leq\int_0^1 h(s)\d s\lesssim \|h\|_X\leq 1.$$ The last two estimates tell us that $\lambda(g)\lesssim 1$.

We further have $$\int_0^1 f^*(t)g^*(t)\d t=\int_0^1 f^*(t) \int_t^1 \frac{h(s)}{I(s)}\d s\d t=\int_0^1 \frac{h(s)}{I(s)}\int_0^s f^*(t)\d t\d s\geq\frac{1}{2}\|f\|_{Y_X'}.$$ Finally, putting everything together yields
$$\frac{1}{2}\|f\|_{Y_X'}\leq \int_0^1 f^*(t) g^*(t)\d t\leq \lambda'(f)\lambda(g)\lesssim \lambda'(f)$$ or, equivalently, $\lambda(f)\lesssim \|f\|_{Y_X}$.
\end{proof}

\subsection{Alternative norm in the optimal target  space}

The goal of this section is to express the optimal target norm using the maximal nonincreasing rearrangement. We will need to strengthen the assumptions we impose on the function $I$ and in turn obtain a helpful tool in the proof of the main theorem and a new way to describe the norm of functions. In view of Proposition~\ref{lem:INTTargetSpace} we will need boundedness of $f\mapsto f^{**}$ on $\Lambda_I$.



\begin{prop}\label{prop:Norm1}
Let $I$ be a quasiconcave function satisfying $\eqref{eq:00001}$ and $\eqref{eq:BoundedMaximalOp}$. Let $X$ be an r.i.~space. Put $\|f\|_Z=\|S_I f\|_{X'}, f\in\mathcal M_+(0, 1)$. Then 
\begin{equation*}
\|f\|_{Y_X}\approx\sup_{\|g\|_Z\leq 1}\int_0^1 \frac{I(t)}{t}(f^{**}(t)-f^*(t))g^*(t)\d t+\|f\|_1,\quad f\in\mathcal M_+(0, 1).
\end{equation*}
\end{prop}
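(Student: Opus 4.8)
The plan is to derive the statement from Theorem~\ref{thm:Norm1}, which already gives $\|f\|_{Y_X}\approx\sup_{\|g\|_Z\le 1}\int_0^1-I(t)g^*(t)\,\d f^*(t)+\|f\|_1$. Writing $\omega(f)$ for $\sup_{\|g\|_Z\le 1}\int_0^1\frac{I(t)}{t}(f^{**}(t)-f^*(t))g^*(t)\,\d t$, the task is thus to show $\omega(f)+\|f\|_1\approx\sup_{\|g\|_Z\le 1}\int_0^1-I(t)g^*(t)\,\d f^*(t)+\|f\|_1$. Both sides are infinite unless $f\in L^1(0,1)$, so I may assume $f\in L^1$, in which case $f^{**}(t)<\infty$ for $t\in(0,1)$. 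The bridge between the two forms is the elementary identity
\[
t\bigl(f^{**}(t)-f^*(t)\bigr)=\int_0^t\bigl(f^*(s)-f^*(t)\bigr)\,\d s=\int_{(0,t]}r\,\d(-f^*)(r),
\]
which comes from writing $f^*(s)-f^*(t)=\int_{(s,t]}\d(-f^*)$ and applying Fubini. Integrating once more against $\frac{I(t)g^*(t)}{t^2}\,\d t$ and swapping the order of integration gives, for every nonincreasing $g^*$,
\[
\int_0^1\frac{I(t)}{t}\bigl(f^{**}(t)-f^*(t)\bigr)g^*(t)\,\d t=\int_{(0,1)}\kappa_g(r)\,\d(-f^*)(r),\qquad \kappa_g(r):=r\int_r^1\frac{I(t)g^*(t)}{t^2}\,\d t .
\]
So the whole proof reduces to comparing the weights $\kappa_g$ and $r\mapsto I(r)g^*(r)$ when integrated against the positive measure $\d(-f^*)$.

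For the inequality $\omega(f)+\|f\|_1\lesssim\|f\|_{Y_X}$ (this is where the new hypothesis \eqref{eq:BoundedMaximalOp} is really used): since $g^*$ is nonincreasing, $\kappa_g(r)\le r\,g^*(r)\int_r^1\frac{I(t)}{t^2}\,\d t$, and under \eqref{eq:00001} and \eqref{eq:BoundedMaximalOp} one has $\int_r^1\frac{I(t)}{t^2}\,\d t\lesssim\frac{I(r)}{r}$ (combining \eqref{eq:BoundedMaximalOp} with \eqref{eq:00001}, as recorded in Section~\ref{preli}), hence $\kappa_g(r)\lesssim I(r)g^*(r)$ pointwise. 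Integrating against $\d(-f^*)$ and taking the supremum over $\|g\|_Z\le1$ yields $\omega(f)\lesssim\sup_{\|g\|_Z\le1}\int_0^1-I(t)g^*(t)\,\d f^*(t)\lesssim\|f\|_{Y_X}$ by Theorem~\ref{thm:Norm1}, while $\|f\|_1\lesssim\|f\|_{Y_X}$ by property (P5) of $Y_X$.

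For the reverse inequality it suffices, again via Theorem~\ref{thm:Norm1}, to bound $\sup_{\|g\|_Z\le1}\int_{(0,1)}I(r)g^*(r)\,\d(-f^*)(r)$ by $\omega(f)+\|f\|_1$. Given a nonincreasing $g$ with $\|g\|_Z\le1$, I would test $\omega$ against the dilate $\widetilde g:=E_2g$; since the dilation operator is bounded on every r.i.q.~space, $\|\widetilde g\|_Z\lesssim\|g\|_Z\le1$. Bounding $\kappa_{\widetilde g}(r)$ from below by restricting its defining integral to $(r,2r)$ and using monotonicity of $I$ and of $g^*$ gives $\kappa_{\widetilde g}(r)\gtrsim I(r)g^*(r)$ for $r\in(0,\tfrac12]$; on $(\tfrac12,1)$ one instead uses $I\le1$ and $g^*(r)\le g^*(\tfrac12)\lesssim\|g\|_1\lesssim\|g\|_Z\le1$ (property (P5) for $Z$), so that $\int_{(1/2,1)}I(r)g^*(r)\,\d(-f^*)(r)\lesssim f^*(\tfrac12)\lesssim\|f\|_1$. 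Splitting the integral at $\tfrac12$ and combining,
\[
\int_{(0,1)}I(r)g^*(r)\,\d(-f^*)(r)\lesssim\int_{(0,1)}\kappa_{\widetilde g}(r)\,\d(-f^*)(r)+\|f\|_1\lesssim\omega(f)+\|f\|_1,
\]
and taking the supremum over $g$ finishes the argument through Theorem~\ref{thm:Norm1}. I expect the main obstacles to be (i) arranging the lower bound $\kappa_{\widetilde g}\gtrsim I g^*$ while keeping $\|\widetilde g\|_Z$ under control — the dilation is exactly what makes this possible — and (ii) the endpoint $r\to 1^-$, which is genuinely where the additive $\|f\|_1$ term cannot be dropped; the remaining care is the standard bookkeeping for the Fubini identity when $f^*$ has a singular part or $f^*(0^+)=\infty$, handled by truncation as in the reductions used in the proof of Proposition~\ref{prop:LRIN}.
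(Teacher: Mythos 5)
Your argument is correct, and it reaches the conclusion by a genuinely different route than the paper. The paper first proves that $f\mapsto f^{**}$ is bounded on $Y_X$ (via \eqref{eq:BoundedMaximalOp}, the cited boundedness result on $\Lambda_I$, and the interpolation statement in Proposition~\ref{lem:INTTargetSpace}), then applies Theorem~\ref{thm:Norm1} to the auxiliary nonincreasing function $t\mapsto\frac{1}{t}\int_0^t f^{**}(s)\d s$, integrates by parts to bring in the operator $H$ from \eqref{eq:OperatorH}, and concludes by the two pointwise kernel bounds $Hg^*\lesssim g^*$ and $(Hg^*)(t/2)\gtrsim g^*(t)$ (\eqref{eq:Hjeleqthg}, \eqref{eq:ansdhbsajhd}), the latter deployed by dilating $f$. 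You instead apply Theorem~\ref{thm:Norm1} to $f$ itself and transpose the pairing through the Fubini identity $t(f^{**}(t)-f^*(t))=\int_{(0,t]}r\,\d(-f^*)(r)$, so that $\kappa_g(r)=I(r)(Hg^*)(r)$ lands on the $g$-side and is integrated against $\d(-f^*)$; the same two pointwise estimates then do the work, but with their roles in the two directions swapped relative to the paper, with the dilation applied to $g$ (legitimate, since $Z$ is an r.i.q.~space by Theorem~\ref{cor:ZRIQ}, so $E_2$ is bounded on it and $Z$ satisfies (P5)), and with the tail over $(\tfrac12,1)$ absorbed into $\|f\|_1$. The payoff of your version is that it bypasses the interpolation machinery and the boundedness of $f\mapsto f^{**}$ on $Y_X$ entirely — \eqref{eq:BoundedMaximalOp} enters only through the elementary estimate $\int_r^1 I(t)t^{-2}\d t\lesssim I(r)/r$ recorded in the preliminaries — whereas the paper's route establishes that boundedness as a byproduct, which is in the spirit of how the condition is used elsewhere in the paper. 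The measure-theoretic points you flag are indeed harmless: for $f\in L^1$ the measure $\d(-f^*)$ is $\sigma$-finite on $(0,1)$ and all integrands are nonnegative, so Tonelli applies without any truncation.
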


\begin{proof}
Thanks to the fact that $I$ satisfies~\eqref{eq:BoundedMaximalOp}, we can use~\cite[Theorem 10.3.12]{FS} to establish that the mapping $f\mapsto f^{**}$ is bounded on $\Lambda_I$. This implies that the map $f\mapsto \frac{1}{t}\int_0^t f(s)\d s$ is, too, bounded on $\Lambda_I$. Proposition~\ref{lem:INTTargetSpace} tells us that $Y_X\in\INT(L^\infty, \Lambda_I)$. Therefore $f\mapsto\frac{1}{t}\int_0^t f(s)\d s$ and, consequently, $f\mapsto f^{**}$ are bounded on $Y_X$.

Now, for every $\eps\in (0, \frac{1}{2})$ and $g\in\mathcal M_+(0, 1)$ we have 
\begin{align*}
&\int_\eps^{1-\eps}-I(t)g^*(t) \d\left[\frac{1}{t}\int_0^t f^{**}(s)\d s\right]\\=&\int_\eps^{1-\eps}-I(t)g^*(t)\left[-\frac{1}{t^2}\int_0^t f^{**}(s)\d s+\frac{1}{t}f^{**}(t)\right]{\rm d} t\\
=&\int_\eps^{1-\eps}I(t)\left[\frac{1}{t^2}\int_0^t(f^{**}(s)-f^*(s))\d s\right]g^*(t)\d t.
\end{align*}
Thus, passing to the limit and using Fubini's Theorem, we have
\begin{align*}
-\int_0^1 I(t)g^*(t)\d \left[\frac{1}{t}\int_0^t f^{**}(s)\d s\right]&=\int_0^1 \frac{I(t)}{t^2}\int_0^t(f^{**}(s)-f^*(s))\d s\,g^*(t)\d t\\
&=\int_0^1 \frac{I(s)}{s}(f^{**}(s)-f^*(s))\frac{s}{I(s)}\int_s^1\frac{I(t)}{t^{2}}g^*(t)\d t\d s\\
&=\int_0^1 \frac{I(s)}{s}(f^{**}(s)-f^*(s)) (H g^*)(s).
\end{align*} Here, the operator $H$ is defined as 
\begin{equation}\label{eq:OperatorH}
    Hg(s)=\frac{s}{I(s)}\int_s^1\frac{I(t)}{t^2}g(t)\d t,\quad g\in\mathcal M_+(0, 1), s\in (0, 1).
\end{equation} Observe that, on one hand, we have 
\begin{equation}\label{eq:Hjeleqthg}
\begin{split}
(H g^*)(s)=\frac{s}{I(s)}\int_s^1\frac{I(t)}{t^{2}}g^*(t)\d t&\leq g^*(s)\frac{s}{I(s)}\int_s^1 \frac{I(t)}{t^{2}}\d t\lesssim g^*(s),
\end{split}
\end{equation}and, on the other hand,
\begin{equation}\label{eq:ansdhbsajhd}
(H g^*)\left(\frac{s}{2}\right)\gtrsim \frac{s}{I(s)}\int_\frac{s}{2}^s g^*(t)\frac{I(t)}{t^2}\d t\gtrsim \frac{s}{I(s)}g^*(s)\frac{I(s)}{s^2}\cdot s=g^*(s).
\end{equation} Finally, using Theorem~\ref{thm:Norm1}, we get
\begin{equation}\label{eq:nejakarovnice}
\begin{split}
\|f\|_{Y_X}&\approx\left\|\frac{1}{t}\int_0^t f^{**}(s)\d s\right\|_{Y_X}\\&\approx\sup_{\|g\|_Z\leq 1}\int_0^1\frac{I(t)}{t}(f^{**}(t)-f^*(t)) (Hg^*)(t)\d t+\|f\|_1.
\end{split}
\end{equation} To finish the proof we need to show that $$\sup_{\|g\|_Z\leq 1}\int_0^1\frac{I(t)}{t}(f^{**}(t)-f^*(t)) (Hg^*)(t)\d t\approx \sup_{\|g\|_Z\leq 1}\int_0^1\frac{I(t)}{t}(f^{**}(t)-f^*(t)) g^*(t)\d t.$$  The inequality “$\lesssim$” immediately follows from $\eqref{eq:Hjeleqthg}$. 

The boundedness of the dilation operator followed by $\eqref{eq:nejakarovnice}$ and the fact that $(f^*(2\cdot))^{**}(t)=f^{**}(2t)$, substitution and $\eqref{eq:ansdhbsajhd}$ yield
\begin{align*}
    \|f\|_{Y_X}\approx \|f^{*}(2t)\|_{Y_X}&\approx\sup_{\|g\|_Z\leq 1}\int_0^1\frac{I(t)}{t}(f^{**}(2t)-f^*(2t)) (Hg^*)(t)\d t+\|f\|_1\\
    &\gtrsim\sup_{\|g\|_Z\leq 1}\int_0^1\frac{I(t)}{t}(f^{**}(t)-f^*(t)) (Hg^*)\left(\frac{t}{2}\right){\rm d} t+\|f\|_1\\
    &\gtrsim \sup_{\|g\|_Z\leq 1}\int_0^1\frac{I(t)}{t}(f^{**}(t)-f^*(t)) g^*(t)\d t+\|f\|_1.
\end{align*}
\end{proof}

\begin{thm}\label{thm:Norm2}
 Let $I\in \mathcal Q$. Let $X$ be an~r.i.~space such that $S_I$ is bounded on $X'$. Then
\begin{equation}\label{eq:OTargetNorm1212}
\|f\|_{Y_X}\approx\left\|\frac{I(t)}{t}(f^{**}(t)-f^*(t))\right\|_X+\|f\|_1,\quad f\in\mathcal M_+(0, 1).
\end{equation}
\end{thm}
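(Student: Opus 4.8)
The plan is to read the formula $\eqref{eq:OTargetNorm1212}$ off Proposition~\ref{prop:Norm1}, using the two extra inputs available here: that $I\in\mathcal Q$ (so Proposition~\ref{prop:Norm1} applies and, crucially, the average and maximal-operator conditions are at hand) and that $S_I\colon X'\to X'$. Write $\Phi(f)(t)=\frac{I(t)}{t}\bigl(f^{**}(t)-f^*(t)\bigr)$ for brevity. Since $I$ is quasiconcave we have $I(t)\le I(2t)\le 2I(t)$, so $I\in\Delta_2$; and $I\in\mathcal Q$ supplies $\eqref{eq:00001}$ and $\eqref{eq:BoundedMaximalOp}$, so Proposition~\ref{prop:Norm1} gives, with $\|g\|_Z\coloneqq\|S_Ig\|_{X'}$,
\begin{equation*}
\|f\|_{Y_X}\approx\sup_{\|g\|_Z\le1}\int_0^1\Phi(f)(t)g^*(t)\d t+\|f\|_1.
\end{equation*}
As $g^*\le S_Ig$ always and $S_I\colon X'\to X'$, we have $\|g\|_{X'}\le\|g\|_Z\lesssim\|g\|_{X'}$, so the supremum may equally be taken over $\|g\|_{X'}\le1$. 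The bound ``$\lesssim$'' in $\eqref{eq:OTargetNorm1212}$ is then immediate from Hölder's inequality and $\|g^*\|_{X'}=\|g\|_{X'}$, and needs nothing beyond $\eqref{eq:00001}$, $\eqref{eq:BoundedMaximalOp}$ and $S_I\colon X'\to X'$. Hence the content of the theorem is the reverse inequality $\|\Phi(f)\|_X\lesssim\sup_{\|g\|_{X'}\le1}\int_0^1\Phi(f)g^*+\|f\|_1$.

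For this I would apply Sinnamon's identity $\eqref{eq:LevelFunction}$ with $X$ replaced by $X'$: using the duality principle $(X')'=X$, the supremum on the right equals $\|(\Phi(f))^\circ\|_X$, the $X$-norm of the level function of $\Phi(f)$. By $\eqref{eq:levelfunctionandrearrangement}$ and the HLP principle one always has $\|(\Phi(f))^\circ\|_X\le\|\Phi(f)\|_X$, so the whole theorem reduces to the single estimate
\begin{equation*}
\|\Phi(f)\|_X\lesssim\|(\Phi(f))^\circ\|_X+\|f\|_1,\qquad f\in\mathcal M_+(0,1).
\end{equation*}
The structural fact that makes this tractable is that $\psi(t)\coloneqq t\bigl(f^{**}(t)-f^*(t)\bigr)=\int_0^t\bigl(f^*(s)-f^*(t)\bigr)\d s$ is nondecreasing (its density is $-t(f^*)'(t)\ge0$) and bounded by $\|f\|_1$, so $\Phi(f)(t)=\frac{I(t)}{t^2}\psi(t)$ is a nonincreasing weight times a nondecreasing, $\|f\|_1$-controlled factor. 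On $(\tfrac12,1)$ one has $\frac{I(t)}{t^2}\lesssim1$, hence $\Phi(f)\chi_{(1/2,1)}\lesssim\|f\|_1\chi_{(0,1)}$ pointwise and its $X$-norm disappears into the error term; this reduces matters to $\Phi(f)$ supported in $(0,\tfrac12)$, and by monotone approximation (as elsewhere in the paper) we may assume $f^*$ bounded, so that $G(t)\coloneqq\int_0^t\Phi(f)$ and its least concave majorant $\widehat G$, with $\widehat G'=(\Phi(f))^\circ$, vanish at $0$. It then remains to prove $\int_0^t(\Phi(f))^*(s)\d s\lesssim\widehat G(ct)+t\|f\|_1$ for some $c>0$, after which the HLP principle and the boundedness of the dilation operator on $X$ close the argument.

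The main obstacle is precisely this last comparison. Since $\Phi(f)$ is genuinely non-monotone, one must control how far the concave majorant $\widehat G$ of the \emph{true} primitive $G$ can fall below $\int_0^{\cdot}(\Phi(f))^*$, the primitive of the rearrangement, and it is exactly here that the full package of conditions defining $\mathcal Q$ — the average property $\eqref{eq:00002}$, the maximal-operator condition $\eqref{eq:BoundedMaximalOp}$, and the constant inequality $(1-c)d\le c$ — is used. The mechanism is already visible on the model functions $f=\chi_{(0,r)}$: there $\Phi(f)(t)=\frac{rI(t)}{t^2}\chi_{(r,1)}(t)$, $G$ has a kink at $r$, and $\widehat G$ detaches from $G$ at the point $t_0$ determined by $\int_r^{t_0}\frac{I(s)}{s^2}\d s=\frac{I(t_0)}{t_0}$, so that $(\Phi(f))^\circ$ is constant on $(0,t_0)$ and equals $\frac{rI(t)}{t^2}$ on $(t_0,1)$; the quantitative comparison of this level function with $(\Phi(f))^*$ in terms of $\frac{I}{t}$-type weights is what forces the constants $c,d$ of Definition~\ref{def:Conditions} into the picture, and the inequality $(1-c)d\le c$ is what makes the estimate survive for general $f$.
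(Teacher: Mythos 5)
Your reduction is sound and coincides with the paper's own first half: Proposition~\ref{prop:Norm1} applies since $I\in\mathcal Q$, the supremum over $\|g\|_Z\le1$ may be replaced by the supremum over $\|g\|_{X'}\le1$ because $S_I$ is bounded on $X'$, Sinnamon's identity $\eqref{eq:LevelFunction}$ (through the down dual associate norm and $\|\cdot\|_X=\|\cdot\|_{X''}$) identifies that supremum with $\bigl\|\bigl(\tfrac{I(t)}{t}(f^{**}(t)-f^*(t))\bigr)^\circ\bigr\|_X$, and $\eqref{eq:levelfunctionandrearrangement}$ plus HLP give the easy direction. But the theorem lives entirely in the remaining estimate $\|\Phi(f)\|_X\lesssim\|(\Phi(f))^\circ\|_X+\|f\|_1$, and there your proposal has a genuine gap: you reduce it to the comparison $\int_0^t(\Phi(f))^*(s)\,{\rm d}s\lesssim\widehat G(ct)+t\|f\|_1$, verify it only on the model functions $f=\chi_{(0,r)}$, and then assert — without argument — that the conditions in Definition~\ref{def:Conditions} make it survive for general $f$. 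That is precisely the step you describe as "the main obstacle," and it is left unproven. Comparing the rearrangement of the non-monotone function $\Phi(f)$ directly with the concave majorant of its true primitive is exactly the kind of comparison the paper avoids, and nothing in your sketch indicates how $(1-c)d\le c$ would enter such an argument for arbitrary $f$.

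The paper's proof of this hard step is structured quite differently, and its three ingredients are absent from your plan. First, an explicit computation (splitting $\int_t^1\frac{I(s)}{s}(f^{**}(s)-f^*(s))\frac{{\rm d}s}{s}$ via Fubini and using the constants $c,d$ of Definition~\ref{def:Conditions} together with $(1-c)d\le c$) yields the \emph{pointwise} inequality
\begin{equation*}
\frac{I(t)}{t}\bigl(f^{**}(t)-f^*(t)\bigr)\lesssim\int_t^1\frac{I(s)}{s}\bigl(f^{**}(s)-f^*(s)\bigr)\frac{{\rm d}s}{s}+\|f\|_1 .
\end{equation*}
Second, the defining inequality of the level function, $\int_0^t\Phi(f)\le\int_0^t(\Phi(f))^\circ$, is upgraded via Hardy's lemma and Fubini (the paper's $\eqref{eq:HLPLF}$) so that $\Phi(f)$ may be replaced by $(\Phi(f))^\circ$ inside the integral $\int_t^1(\cdot)\frac{{\rm d}s}{s}$ after taking $X$-norms. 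Third, the operator $h\mapsto\int_t^1 h(s)\frac{{\rm d}s}{s}$ must be bounded on $X$; this is where $S_I\colon X'\to X'$ is used a second time, through Theorem~\ref{thm:SImIsNice} (giving $X'\in\INT(L^\infty,m_I)$) together with Proposition~\ref{lem:NIApproxMNI} and the average property, followed by duality $\eqref{eq:associateoperatorss}$. Your proposal never invokes this interpolation/duality mechanism, so even granting your unproven comparison as a conjecture, the route by which the $\mathcal Q$-constants and the boundedness of $S_I$ on $X'$ are supposed to act is not supplied. To complete the proof you should either carry out the paper-style pointwise estimate and the averaging-operator argument, or actually prove your claimed comparison between $(\Phi(f))^*$ and $\widehat G$ for general $f$ — the latter is not a routine verification and is not reduced to the model case by any monotonicity or approximation argument you give.
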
 

\begin{proof}
Since $S_I$ is bounded on $X'$, we have $\|S_I g\|_{X'}\approx \|g\|_{X'}, g\in\mathcal M_+(0, 1)$. Therefore, by Proposition~\ref{prop:Norm1}, $\eqref{eq:LevelFunction}$ and the fact that $\|\cdot\|_X=\|\cdot\|_{X''}$, we obtain
\begin{equation}\label{eq:rovniiiceee}
\begin{split}
\|f\|_{Y_X}&\approx \sup_{\|S_I g\|_{X'}\leq1}\int_0^1 \frac{I(t)}{t}(f^{**}(t)-f^*(t))g^*(t)\d t+\|f\|_1\\
    &\approx \sup_{\|g\|_{X'}\leq1}\int_0^1 \frac{I(t)}{t}(f^{**}(t)-f^*(t))g^*(t)\d t+\|f\|_1\\
	&=\left\|\frac{I(t)}{t}(f^{**}(t)-f^*(t))\right\|_{(X')_d'}+\|f\|_1\\
	&=\left\|\left(\frac{I(s)}{s}(f^{**}(s)-f^*(s))\right)^\circ\right\|_X+\|f\|_1,\quad f\in\mathcal M_+(0, 1).
\end{split}
\end{equation}
It follows from $\eqref{eq:levelfunctionandrearrangement}$ and the HLP principle $\eqref{eq:HLP}$ that $\|f^\circ\|\leq \|f\|$ for every $f\in\mathcal M_+(0, 1)$ and every r.i.~norm $\|\cdot\|$. We have therefore obtained “$\lesssim$" in $\eqref{eq:OTargetNorm1212}$.

From the definition of the level function, for every $f\in\mathcal M_+(0, 1)$, we have $$\int_0^t \frac{I(s)}{s}(f^{**}(s)-f^*(s))\d s\leq \int_0^t\left(\frac{I(y)}{y}(f^{**}(y)-f^*(y))\right)^\circ(s)\d s,\quad t\in (0, 1).$$ Using Hardy's lemma we get to $$\int_0^t \frac{I(s)}{s}(f^{**}(s)-f^*(s))g^{**}(s)\d s\leq \int_0^t\left(\frac{I(y)}{y}(f^{**}(y)-f^*(y))\right)^\circ(s)g^{**}(s)\d s$$ for every $t\in (0, 1), f, g\in\mathcal M_+(0, 1)$. Using Fubini's theorem, we rewrite the last inequality as 
\begin{equation}\label{eq:HLPLF}
\int_0^t g^*(s)\int_s^t \frac{I(y)}{y}(f^{**}(y)-f^*(y))\frac{{\rm d} y}{y}\d s \leq \int_0^t g^*(s)\int_s^t \left(\frac{I(z)}{z}(f^{**}(z)-f^*(z))\right)^\circ(y)\frac{{\rm d} y}{y}\d s.
\end{equation}

For $f\in\mathcal M_+(0, 1)$ we calculate
\begin{equation}\label{eq:rovnice00111}
\begin{split}
&\int_t^1 \frac{I(s)}{s}(f^{**}(s)-f^*(s))\frac{{\rm d} s}{s}\\
=&\int_t^1 \frac{I(s)}{s^{3}}\int_0^1\chi_{(0, s)}(y) f^*(y)\d y\d s-\int_t^1\frac{I(s)}{s^2} f^*(s) \d s\\
=&\int_0^1 f^*(y)\int_t^1 \frac{I(s)}{s^{3}}\chi_{(y, 1)}(s)\d s\d y-\int_t^1\frac{I(s)}{s^2} f^*(s) \d s\\
=&\underbrace{\int_0^t f^*(y)\d y\int_t^1 \frac{I(s)}{s^{3}}\d s}_{I}+\underbrace{\int_t^1 f^*(y)\int_y^1\frac{I(s)}{s^{3}}\d s\d y
-\int_t^1\frac{I(s)}{s^2} f^*(s) \d s}_{II}.
\end{split}
\end{equation}

In what follows, let $c$ and $d$ be the constants from Definition~\ref{def:Conditions}. For $I$ we estimate
\begin{equation}\label{eq:rovnice000112}
\begin{split}
    I=tf^{**}(t)\int_t^1\frac{I(s)}{s^3}\d s\geq ctf^{**}(t)\left(\frac{I(t)}{t^2}-1\right)=c\frac{I(t)}{t}f^{**}(t)-ctf^{**}(t).
\end{split}
\end{equation}
As for $II$, using that $c<1$ (Remark~\ref{rem:constC}), we estimate
\begin{equation}\label{eq:486454}
\begin{split}
    II&\geq\int_t^1 f^*(y)c\left(\frac{I(y)}{y^2}-1\right){\rm d}y-\int_t^1 \frac{I(y)}{y^2}f^*(y)\d y\\
    &=\int_t^1 (c-1)f^*(y)\frac{I(y)}{y^2}\d y-c\int_t^1 f^*(y)\d y\\
&\geq (c-1)f^*(t)\int_t^1 \frac{I(y)}{y^2}\d y-c\int_t^1 f^*(y)\d y\\
&\geq (c-1)d\frac{I(t)}{t}f^*(t)-c\int_t^1 f^*(y)\d y.
\end{split}
\end{equation}
Deploying the assumption $(1-c)d\leq c$, estimates $\eqref{eq:rovnice000112}$, $\eqref{eq:486454}$ and $\eqref{eq:rovnice00111}$, we arrive at
\begin{align*}
&(1-c)d\frac{I(t)}{t}(f^{**}(t)-f^*(t))-c\|f\|_1\\
\leq& c\frac{I(t)}{t}f^{**}(t)+(c-1)d\frac{I(t)}{t}f^*(t)-c\|f\|_1\\
=& c\frac{I(t)}{t}f^{**}(t)-ctf^{**}(t)+(c-1)d\frac{I(t)}{t}f^*(t)-c\int_t^1f^*(y)\d y\\
\leq&I+II= \int_t^1 \frac{I(s)}{s}(f^{**}(s)-f^*(s))\frac{{\rm d} s}{s}.
\end{align*}
Therefore we obtain $$\frac{I(t)}{t}(f^{**}(t)-f^*(t))\lesssim \int_t^1 \frac{I(s)}{s}(f^{**}(s)-f^*(s))\frac{{\rm d} s}{s}+\|f\|_1,\quad f\in\mathcal M_+(0, 1), t\in (0, 1).$$

Hence, for every $f\in\mathcal M_+(0, 1)$ we have 
\begin{equation}\label{eq:iuahdsiuhaids}
\begin{split}
\left\|\frac{I(t)}{t}(f^{**}(t)-f^*(t))\right\|_X\lesssim \left\| \int_t^1 \frac{I(s)}{s}(f^{**}(s)-f^*(s))\frac{{\rm d} s}{s} \right\|_X+&\|f\|_1\\
	\lesssim \left\| \int_t^1 \left(\frac{I(y)}{y}(f^{**}(y)-f^*(y))\right)^\circ(s)\frac{{\rm d} s}{s} \right\|_X+&\|f\|_1,
 \end{split}
\end{equation}
where the second inequality comes from $\eqref{eq:HLPLF}$ by taking $t=1$ and supremum over all $g\in\mathcal M_+(0, 1)$ with $\|g\|_{X'}\leq 1$. 

As $I$ has the average property, by Proposition~\ref{lem:NIApproxMNI} $f\mapsto \frac{1}{t}\int_0^t f(s)\d s$ is bounded simultaneously on $L^\infty$ and $m_I$. Since the operator $S_I$ is bounded on $X'$, Theorem~\ref{thm:SImIsNice} asserts that $X'\in\INT(L^\infty, m_I)$. Therefore $f\mapsto \frac{1}{t}\int_0^t f(s)\d s$ is bounded on $X'$ and so, by duality $\eqref{eq:associateoperatorss}$, its associate operator, $f\mapsto \int_t^1 \frac{f(s)}{s}\d s$, is bounded on $X$. We therefore have $$\left\| \int_t^1 \left(\frac{I(y)}{y}(f^{**}(y)-f^*(y))\right)^\circ(s)\frac{{\rm d} s}{s} \right\|_X\lesssim \left\| \left(\frac{I(y)}{y}(f^{**}(y)-f^*(y))\right)^\circ \right\|_X.$$ Adding this estimate to $\eqref{eq:iuahdsiuhaids}$ and using $\eqref{eq:rovniiiceee}$ we have
$$\left\|\frac{I(t)}{t}(f^{**}(t)-f^*(t))\right\|_X\lesssim \left\| \left(\frac{I(y)}{y}(f^{**}(y)-f^*(y))\right)^\circ \right\|_X+\|f\|_1\approx\|f\|_{Y_X},$$ which concludes the proof.
\end{proof}

\begin{rem}\label{rem:Povidkakthm312}
    Observe that the average property was only used in Theorem~\ref{thm:SImIsNice} to guarantee us the validity of the theorem for an arbitrary r.i.~space. Therefore, if the map $f\mapsto \frac{1}{t}\int_0^t f(s)\d s$ is bounded on the associate space of a certain r.i.~space $X$, we may drop this assumption and still obtain validity of $\eqref{eq:OTargetNorm1212}$.

Also, we can view the condition about $I$ having the average property as the property of the operator $H_I$ to push the optimal target space $Y_{L^1}$ far enough from $L^1$, so that $f\mapsto f^{**}$ is bounded on $Y_{L^1}$.

Since we assume condition $\eqref{eq:00001}$, estimate $\eqref{eq:OTargetNorm1212}$ reads as  $$\|f\|_{\Lambda_I}\approx\left\|\frac{I(t)}{t}(f^{**}(t)-f^*(t))\right\|_1+\|f\|_1.$$ From here it is evident, that in order to have the right-hand side majorized by the left-hand side, the boundedness of $f\mapsto f^{**}$ on $\Lambda_I$ is necessary. This, as we remarked, is not satisfied for example by $I(t)=t\log^\alpha\frac{2}{t}, \alpha\in\left[0, \frac{1}{2}\right]$.
\end{rem}

\begin{cor}
Let $I\in \mathcal Q$ and let $X$ be an r.i.~space. Put $\|f\|_Z=\|S_I f^{**}\|_{X'}$.  Then
\begin{equation*}
\|f\|_{Y_X}\approx\left\|\frac{I(t)}{t}(f^{**}(t)-f^*(t))\right\|_{Z'}+\|f\|_1.
\end{equation*}
\end{cor}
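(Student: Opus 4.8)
The corollary is a mild variant of Theorem~\ref{thm:Norm2}: there the inner functional is $\|S_I g\|_{X'}$, here it is $\|S_I g^{**}\|_{X'}$, i.e. $\|g\|_Z = \|S_I(g^{**})\|_{X'}$ rather than $\|S_I g\|_{X'}$. The plan is to show that these two functionals are equivalent as functionals on $\mathcal M_+(0,1)$, i.e. $\|S_I(g^{**})\|_{X'} \approx \|S_I g\|_{X'}$ for $f \in \mathcal M_+(0,1)$, under the hypothesis $I \in \mathcal Q$. Once that is in hand, the space $Z$ defined in the corollary and the space $Z$ (call it $Z_0$) implicitly appearing inside the proof of Theorem~\ref{thm:Norm2} coincide up to equivalence of norms, so $Z' \approx Z_0'$, and the conclusion is literally the right-hand side of~\eqref{eq:rovniiiceee} read through the identity $\|\cdot\|_{(X')'_d} = \|(\cdot)^\circ\|_X$. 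Actually, it is cleaner to go directly through Proposition~\ref{prop:Norm1}: with $\|g\|_Z = \|S_I(g^{**})\|_{X'}$, one wants
\begin{equation*}
\|f\|_{Y_X}\approx\sup_{\|g\|_Z\leq 1}\int_0^1 \frac{I(t)}{t}(f^{**}(t)-f^*(t))g^*(t)\,\d t+\|f\|_1,
\end{equation*}
and this follows from Proposition~\ref{prop:Norm1} applied with the space $X$ as is, provided $\|S_I(g^{**})\|_{X'} \approx \|S_I g\|_{X'}$, because then the two supremum sets are comparable.

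First I would establish the key equivalence $\|S_I(g^{**})\|_{X'} \approx \|S_I g\|_{X'}$. The inequality $\|S_I g\|_{X'} \lesssim \|S_I(g^{**})\|_{X'}$ is immediate: $g^* \le g^{**}$, $S_I$ is monotone, hence $S_I g \le S_I(g^{**})$ pointwise. For the reverse, note that $I \in \mathcal Q$ entails that $S_I$ is bounded on $X'$ — wait, that is not given; $X'$ here is arbitrary. Instead I would use that $I$ has the average property (part of $I \in \mathcal Q$), so by Proposition~\ref{lem:NIApproxMNI} we have $\sup_{0<s\le t} I(s) h^{**}(s) \approx \sup_{0<s\le t} I(s) h^*(s)$ for every $h \in \mathcal M_+(0,1)$ and $t \in (0,1)$. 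Applying this with $h$ chosen so that $h^{**}$ plays the role of $g^{**}$ — more directly, observe $(g^{**})^*(s) = g^{**}(s)$ since $g^{**}$ is nonincreasing, so $S_I(g^{**})(t) = \frac{1}{I(t)}\sup_{0<s\le t} I(s) g^{**}(s) \approx \frac{1}{I(t)}\sup_{0<s\le t} I(s) g^*(s) = S_I g(t)$, where the middle $\approx$ is exactly Proposition~\ref{lem:NIApproxMNI} with $h = g$. Thus $S_I(g^{**}) \approx S_I g$ pointwise, and the $X'$-norm equivalence follows at once by monotonicity of $\|\cdot\|_{X'}$ and (P2).

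With that equivalence established, the rest is bookkeeping. The set $\{g : \|S_I(g^{**})\|_{X'} \le 1\}$ and the set $\{g : \|S_I g\|_{X'} \le 1\}$ are comparable (each contained in a fixed multiple of the other), so the supremum in Proposition~\ref{prop:Norm1}, taken over $\|g\|_Z \le 1$ with the new $Z$, is comparable to the one with the old $Z$. Hence Proposition~\ref{prop:Norm1} gives $\|f\|_{Y_X} \approx \sup_{\|g\|_Z \le 1} \int_0^1 \frac{I(t)}{t}(f^{**}(t)-f^*(t))g^*(t)\,\d t + \|f\|_1$ with the corollary's $Z$. Then I would argue exactly as in the first three lines of~\eqref{eq:rovniiiceee}: since $\|S_I(g^{**})\|_{X'} \approx \|g\|_{X'}$ — no wait, that uses boundedness of $S_I$ on $X'$, which we do not have. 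Here is where I must be careful: in Theorem~\ref{thm:Norm2} the step passing from $\|S_I g\|_{X'} \le 1$ to $\|g\|_{X'} \le 1$ used boundedness of $S_I$ on $X'$; in the corollary no such hypothesis is imposed, so one should \emph{not} simplify the inner norm further. Instead, $\|\cdot\|_Z = \|S_I((\cdot)^{**})\|_{X'}$ is taken as the defining functional of $Z$, and the answer is stated in terms of $Z'$ precisely so that no simplification is needed: by the definition of the associate norm and the identity $\|h\|_{(Z')} = \sup_{\|g\|_{Z'} \le 1}\int_0^1 h g$ applied to the nonincreasing function $h(t) = \frac{I(t)}{t}(f^{**}(t)-f^*(t))$ (for which $\|h\|_{Z'} = \|h\|_{(Z')}$ after rearrangement, since $h^* \approx h$ up to the usual caveats), the displayed supremum over $\|g\|_Z \le 1$ is exactly $\|h\|_{Z'}$.

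The one genuine subtlety — the step I expect to be the main (minor) obstacle — is the claim that $h(t) = \frac{I(t)}{t}(f^{**}(t)-f^*(t))$ can be pulled out as $\|h\|_{Z'}$ with $h^*$ replaced by $h$ inside the pairing. The quantity $f^{**}-f^*$ is \emph{not} monotone, so $\int_0^1 h(t) g^*(t)\,\d t$ is not literally $\int_0^1 h^*(t)g^*(t)\,\d t$ and one cannot naively invoke $\|h\|_{Z'} = \|h^*\|_{Z'}$; this is the same phenomenon flagged in the paper's discussion of $f^{**}-f^*$ lacking monotonicity. However, the definition of the associate functional of a \emph{quasi}norm (as on~p.\,\pageref{eq:AssociateNorm} of the excerpt) is a genuine supremum $\rho'(h) = \sup_{\rho(g)\le 1}\int_0^1 hg$, with no rearrangement inside, so $\sup_{\|g\|_Z \le 1}\int_0^1 \frac{I(t)}{t}(f^{**}(t)-f^*(t))g(t)\,\d t = \|h\|_{Z'}$ holds by definition once we observe that the optimal $g$ in the supremum may be taken nonincreasing (replacing $g$ by $g^*$ only increases the pairing against any $h$, by the Hardy--Littlewood inequality~\eqref{eq:HLIneq}, wait — that needs $h$ nonincreasing, which it is not). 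The clean fix: $\|g\|_Z$ depends only on $g^*$ (it is rearrangement invariant, property (P6)), so $\sup_{\|g\|_Z \le 1}\int_0^1 h g^* = \sup_{\|g\|_Z\le 1, g = g^*}\int_0^1 h g$, and the latter equals $\|h\|_{Z'}$ directly from the definition of $\rho'$ restricted to decreasing test functions, which gives the same value since every competitor $g$ in $\rho'(h)$ can be replaced by $g^*$ without decreasing $\int_0^1 h g$ only when $h$ is decreasing — so in fact one should just define the corollary's right side as $\sup$ over decreasing $g$, matching Proposition~\ref{prop:Norm1} verbatim, and then $\|h\|_{Z'}$ is interpreted as the \emph{down} associate norm $\|h\|_{Z'_d}$ or, since $Z$ is rearrangement invariant, simply note $\sup_{\|g\|_Z\le 1}\int_0^1 h g^* = \|h\|_{Z'}$ by~\cite[the argument around]{GS} as used for $\|\cdot\|_{X'_d}$ in~\eqref{eq:rovniiiceee}. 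So the corollary is really just Proposition~\ref{prop:Norm1} plus the pointwise equivalence $S_I(g^{**}) \approx S_I g$, and I would present it in exactly that two-step form.
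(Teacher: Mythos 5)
Your first steps are fine: under the average property, Proposition~\ref{lem:NIApproxMNI} does give $S_I(g^{**})\approx S_Ig$ pointwise (since $g^{**}$ is nonincreasing), so the corollary's functional $\|g\|_Z=\|S_Ig^{**}\|_{X'}$ is equivalent to $\|S_Ig\|_{X'}$, the unit balls are comparable, and Proposition~\ref{prop:Norm1} yields $\|f\|_{Y_X}\approx\sup_{\|g\|_Z\leq 1}\int_0^1\frac{I(t)}{t}(f^{**}(t)-f^*(t))g^*(t)\,{\rm d}t+\|f\|_1$ with the corollary's $Z$. The genuine gap is the last step, and you half-saw it yourself before talking yourself out of it: the supremum of $\int_0^1 h(t)g^*(t)\,{\rm d}t$ over $\|g\|_Z\leq 1$, with $h(t)=\frac{I(t)}{t}(f^{**}(t)-f^*(t))$, is \emph{not} $\|h\|_{Z'}$. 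Because the pairing is against $g^*$ rather than $g$, it equals the down dual norm $\|h\|_{Z_d'}$, which by Sinnamon's identity \eqref{eq:LevelFunction} is $\|h^\circ\|_{Z'}$; one always has $\|h\|_{Z_d'}\leq\|h\|_{Z'}$, with equality only when $h$ is nonincreasing, and $f^{**}-f^*$ is precisely not monotone. Your closing claim that ``$\sup_{\|g\|_Z\le 1}\int_0^1 hg^*=\|h\|_{Z'}$'' is therefore false in general, and the rearrangement-invariance of $Z$ does not rescue it: replacing $g$ by $g^*$ can strictly decrease the pairing when $h$ is not monotone. Bridging $\|h\|_{Z'}$ and $\|h^\circ\|_{Z'}$ (up to the additive $\|f\|_1$) is exactly the hard half of Theorem~\ref{thm:Norm2}, which uses the class-$\mathcal Q$ constants $c,d$ and the boundedness of the dual Hardy operator obtained from the interpolation property; it is not a formality that can be absorbed into the definition of the associate norm.

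The paper closes this gap without redoing that work by a different routing: by Theorem~\ref{cor:ZRIQ}, $Z$ (with your norm $\|S_If^{**}\|_{X'}$) is an r.i.~space on which $S_I$ is bounded, so Theorem~\ref{thm:Norm2} may be applied to the r.i.~space $Z'$ (whose associate space is $Z$), giving $\|f\|_{Y_{Z'}}\approx\|h\|_{Z'}+\|f\|_1$ directly; then Proposition~\ref{prop:Norm1}, applied once to $Z'$ and once to $X$, together with $\|S_Ig\|_Z=\|g\|_Z$, identifies $\|f\|_{Y_{Z'}}\approx\|f\|_{Y_X}$. If you want to repair your write-up, replace your final duality step by this application of Theorem~\ref{thm:Norm2} to $Z'$; your pointwise equivalence $S_I(g^{**})\approx S_Ig$ then serves only to justify that the two descriptions of $Z$ agree and that the suprema in the two applications of Proposition~\ref{prop:Norm1} are comparable.
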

\begin{proof}
By virtue of Theorem~\ref{cor:ZRIQ}, $Z$ is an r.i.~space which admits boundedness of the operator $S_I$. Thus, by Theorem~\ref{thm:Norm2}, we have
$$\|f\|_{Y_{Z'}}\approx\left\|\frac{I(t)}{t}(f^{**}(t)-f^*(t))\right\|_{Z'} +\|f\|_1.$$ However, by Proposition~\ref{prop:Norm1},
\begin{align*}
\|f\|_{Y_{Z'}}&\approx \sup_{\|S_I g\|_Z\leq 1}\int_0^1 \frac{I(t)}{t}(f^{**}(t)-f^*(t))g^*(t)\d t+\|f\|_1\\
	&\approx \sup_{\| g\|_Z\leq 1}\int_0^1 \frac{I(t)}{t}(f^{**}(t)-f^*(t))g^*(t)\d t+\|f\|_1\approx \|f\|_{Y_X}.
\end{align*}
\end{proof}


\subsection{Optimality and supremum operators}
This section presents characterization of optimal spaces by the boundedness properties of supremum operators $S_I$ and $T_I$. We require two preliminary results regarding the optimal domain norm.

Next proposition is an analogue of Theorem~\ref{cor:TIMIsBoundedOnTarget} for the operator $S_I$.

\begin{prop}\label{prop:domainInt}
Let $Y$ be an r.i.~space and assume that a quasiconcave function $I$ satisfies $\eqref{eq:00001}$ and the average property. Then the operator $S_I$ is bounded on $X_Y'$.
\end{prop}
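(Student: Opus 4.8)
The plan is to mirror the proof of Theorem~\ref{cor:TIMIsBoundedOnTarget}, but on the domain side, exploiting the duality between $S_I$ and the structure of the optimal domain norm. Recall from Proposition~\ref{prop:OptimalDomain} that $\|f\|_{X_Y}=\sup_{h\sim f}\|H_Ih\|_Y$, so that the associate norm satisfies $\|f\|_{X_Y'}=\sup\{\int_0^1 f^*g^* : \|g\|_{X_Y}\le 1\}$, and by the defining property of the optimal domain space one expects $\|f\|_{X_Y'}\approx\|R_I f^\circ\|_{Y'}$ or a comparable expression built from $R_I$ and the level function; more precisely, since $R_I$ and $H_I$ are mutually associate and $X_Y$ is optimal, the associate $X_Y'$ should be the smallest r.i.\ space for which $R_I\colon Y'\to X_Y'$, giving $\|f\|_{X_Y'}\approx\|\,\sup_{t\le s<1}R_I f^*(s)\,\|_{Y'}=\|G_I f\|_{Y'}$ up to the level-function subtleties. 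First I would pin down this concrete description of $\|\cdot\|_{X_Y'}$ precisely, using \eqref{eq:RJeGvX} to replace $G_I f$ by $R_I f^*$ inside the $Y'$-norm where convenient.

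Next I would establish the key pointwise/rearrangement estimate: for $f\in\mathcal M_+(0,1)$,
\begin{equation*}
R_I\bigl((S_I f)^*\bigr)^{**}(t)\lesssim R_I f^*(t),\quad t\in(0,1),
\end{equation*}
or the analogous HLP-type inequality $\bigl(R_I(S_I f)\bigr)^{**}\lesssim R_I f^{**}$ (up to dilation), which is exactly what \eqref{eq:00001} buys us. The mechanism is the same as in Theorem~\ref{lem:TIHLP}: split $f=f_0+f_1$ by the optimal decomposition at the point $t$, use that $S_I$ is subadditive (Theorem~\ref{cor:ZRIQ} or \eqref{eq:SIsubadditivity}, valid since the average property forces $I\in\Delta_2$), and estimate the two pieces — the ``local'' piece by the $L^\infty\to L^\infty$ boundedness of $S_I$ (Theorem~\ref{thm:bSd2}(i)) fed through $R_I\colon L^1\to m_I$ type bounds from Lemma~\ref{lem:BoundednessOfRI}, and the ``tail'' piece by $S_I\colon m_I\to m_I$ (Theorem~\ref{thm:bSd2}(ii)) together with the fact that \eqref{eq:00001} makes $\widetilde I$ have the average property so that $R_I\colon m_{\widetilde I}\to L^\infty$. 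Equivalently, one invokes Theorem~\ref{thm:SImIsNice}: under the average property $(S_I f)^{**}(t)\lesssim (S_I f)(t)$ and $S_I$ controls the $\mathrm{INT}(L^\infty, m_I)$ structure, so composing with $R_I$ and using \eqref{eq:RJeGvX} transports the estimate into any r.i.\ norm.

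Finally I would assemble the pieces: given the description $\|f\|_{X_Y'}\approx\|R_I f^*\|_{Y'}$ (modulo the level function, handled by \eqref{eq:LevelFunction} and \eqref{eq:levelfunctionandrearrangement} exactly as in the proof of Theorem~\ref{thm:Norm2}), we get
\begin{equation*}
\|S_I f\|_{X_Y'}\approx\|R_I (S_I f)^*\|_{Y'}\lesssim\|R_I f^*\|_{Y'}\approx\|f\|_{X_Y'},
\end{equation*}
where the middle inequality is the HLP principle \eqref{eq:HLP} applied to the maximal-rearrangement estimate from the previous step, using that $\|\cdot\|_{Y'}$ is an r.i.\ norm. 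I expect the main obstacle to be the bookkeeping around the level function and the down-dual norm: $\|f\|_{X_Y'}$ is naturally an expression involving $f^\circ$ or a supremum over equidistributed functions rather than $f^*$ cleanly, and one must check that $S_I$ interacts correctly with this — in particular that $(S_I f)^\circ$ is comparable to $S_I f$ up to the usual HLP slack (which holds because $S_I f$ is nonincreasing by Lemma~\ref{lem:SMonotone}, so in fact $\|S_I f\|_{X_Y'}=\|\text{(nonincreasing)}\|$ where down-dual and associate norms agree). Once that identification is made, the boundedness reduces to the single HLP estimate above, which \eqref{eq:00001} and the average property are precisely tailored to deliver.
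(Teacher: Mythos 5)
Your assembly has the right overall shape (it is close in spirit to the paper's argument), but both concrete inputs you feed into it are false. First, the description of the associate of the optimal domain: it is true, and is exactly the fact the paper exploits, that $X_Y'$ is the smallest r.i.\ space $Z$ with $R_I\colon Y'\to Z$ (dualize the maximality of $X_Y$ via \eqref{eq:associateoperatorss}); but this does \emph{not} yield $\|f\|_{X_Y'}\approx\|R_If^*\|_{Y'}$ or $\|G_If\|_{Y'}$. A norm of the form $g\mapsto\|R_Ig^*\|_{W}$ describes the associate of the optimal \emph{target} of $H_I$ over $W'$ (as in Theorem~\ref{thm:ZOptimal}), not the associate of the optimal domain over $W$. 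Concretely, take $Y=L^\infty$: then $\|f\|_{X_{L^\infty}}=\sup_{h\sim f}\int_0^1 |h|/I=\int_0^1 f^*(s)/I(s)\,{\rm d}s$, and by the average property its associate norm is equivalent to $\sup_t I(t)f^{**}(t)$, i.e.\ $X_{L^\infty}'\approx m_I$; your formula instead gives $\|R_If^*\|_{L^1}=\int_0^1 f^*(s)\bigl(\int_s^1 I(t)^{-1}{\rm d}t\bigr){\rm d}s\approx\|f\|_1$, which would force $X_{L^\infty}'=L^1$ --- wrong (and the level-function variant does not help: for $f=\chi_{(0,a)}$ one has $f^\circ=f^*$). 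Second, the ``key estimate'' $R_I(S_If)^{**}\lesssim R_If^*$ (in any of the variants you list) is false: for $I(t)=t^{1/2}$ and $f=\chi_{(0,a)}$ one has $S_If(s)=\min\{1,\sqrt{a/s}\}$, hence $R_I(S_If)(t)\approx\sqrt{a}$ for $t\gg a$, while $R_If^*(t)=a\,t^{-1/2}$ and even $R_If^{**}(t)\approx a\,t^{-1/2}\log(et/a)$; at $t=1$ this is $\sqrt a$ versus roughly $a$, not comparable as $a\to0^+$. There is no pointwise or HLP-type control of $S_I\circ R_I$ by $R_If^*$; what is true, and what the paper actually uses, is the normed statement $\|S_IR_If^*\|_{W}\lesssim\|R_If^*\|_{W}$ for every r.i.\ norm $W$, i.e.\ \eqref{eq:alphabetaOptimal}, obtained via the $K$-functional argument of Theorem~\ref{thm:ZOptimal} (alternatively it follows from $(R_Ig)^*\le G_Ig$, the identity $S_IG_Ig=G_Ig$ from Lemma~\ref{lem:RIFGIFvZ}, and \eqref{eq:RJeGvX}).

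The repair is to abandon any closed-form expression for $\|\cdot\|_{X_Y'}$ and argue purely by extremality, which is the paper's route: put $\|f\|_Z=\|S_If\|_{X_Y'}$; the average property guarantees that $Z$ is an r.i.\ space (Theorem~\ref{cor:ZRIQ}); the estimate \eqref{eq:alphabetaOptimal} applied with $X_Y$ in place of $X$ (or the elementary chain $\|S_IR_Ig\|_{X_Y'}\le\|S_IG_Ig\|_{X_Y'}=\|G_Ig\|_{X_Y'}\approx\|R_Ig^*\|_{X_Y'}\lesssim\|g\|_{Y'}$) shows that $R_I$ maps $Y'$ (resp.\ $Y_{X_Y}'$, as in the paper) into $Z$; and then the minimality of $X_Y'$ --- the one correct structural fact in your first step, derived from Proposition~\ref{prop:OptimalDomain} and duality --- forces $X_Y'\hookrightarrow Z$, which is exactly the boundedness of $S_I$ on $X_Y'$. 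The level-function and down-dual bookkeeping you anticipate is not where the difficulty lies; the difficulty is that $X_Y'$ is known only abstractly, so the proof must pass through its optimality property rather than through an explicit formula for its norm.
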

\begin{proof}
    By virtue of Proposition~\ref{prop:OptimalDomain}, the optimal domain space under the map $H_I$, $X_Y$, exists for every r.i.~space $Y$ and is optimal in $$\|H_I f\|_{Y}\lesssim \|f\|_{X_Y},\quad f\in\mathcal M_+(0, 1).$$  This means, by duality, that $X_Y'$ is optimal in 
    \begin{equation}\label{eq:OptimalSpace1}
    \|R_If^*\|_{X_Y'}\lesssim \|f\|_{Y'},\quad f\in\mathcal M_+(0, 1).
    \end{equation} From the definition of the norm of the space $Y_{X_Y}'$, it easily follows that $Y_{X_Y}'$ is optimal in \begin{equation}\label{eq:OptimalSpace3}
        \|R_I f^*\|_{X_Y'}\lesssim \|f\|_{Y_{X_Y}'},\quad f\in\mathcal M_+(0, 1).
    \end{equation} Let us check that $X_Y'$, too, is optimal therein. Assume that 
\begin{equation*}
    \|R_I f^*\|_Z\lesssim \|f\|_{Y_{X_Y}'},\quad f \in\mathcal M_+(0, 1),
\end{equation*} for some r.i.~space $Z$. We need to show that $X_Y'\emb Z$. Optimality of $Y_{X_Y}'$ in $\eqref{eq:OptimalSpace3}$ and inequality $\eqref{eq:OptimalSpace1}$ tell us that $Y'\emb Y_{X_Y}'$, which in turn implies that $$\|R_I f^*\|_Z\lesssim \|f\|_{Y'}.$$ Finally, optimality of space $X_Y'$ in $\eqref{eq:OptimalSpace1}$ yields $X_Y'\emb Z$ as we wanted.

Now, let us set $\|f\|_Z=\|S_If\|_{X_Y'}$ for $f\in\mathcal M_+(0, 1)$. By virtue of Theorem~\ref{cor:ZRIQ}, $Z$ is an r.i.~space. Then, by $\eqref{eq:alphabetaOptimal}$, for the space $X_Y$ instead of $X$, we obtain $$\|R_I f^*\|_Z\lesssim \|f\|_{Y_{X_Y}'}.$$ Optimality of $X_Y'$ in $\eqref{eq:OptimalSpace3}$ therefore tells us that $\|f\|_Z\lesssim \|f\|_{X_Y'}$. In other words, $S_I$ is bounded on $X_Y'$.
\end{proof}

In the final theorem of the section, we will need an alternative description of
the optimal domain norm. Boundedness of the operator $T_I$ plays essential role
in~there.

\begin{thm}\label{thm:hvezdickadomeny}
    Let $I$ be a quasiconcave function satisfying $\eqref{eq:00001}$. Let $Y$ be an r.i.~space such that $H_I 1\in Y$. If $T_I$ is bounded on $Y'$, then 
$$\sup_{h\sim f}\|H_I h\|_Y\approx \|H_I f^*\|_Y,\quad f\in\mathcal M_+(0, 1).$$
In other words, for the optimal domain space we have $$\|f\|_{X_Y}\approx \|H_I f^*\|_Y,\quad f\in\mathcal M_+(0, 1).$$
\end{thm}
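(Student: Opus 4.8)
The direction $\|H_If^*\|_Y\lesssim\sup_{h\sim f}\|H_Ih\|_Y$ is immediate, since $f^*\sim f$; the work is in the reverse inequality, and the plan is to reduce it, via duality, to a kernel estimate that then splits into a ``diagonal'' and an ``off\-/diagonal'' part. Fix $h\sim f$ (assuming first that $f$ is bounded and passing to the limit by monotone convergence, as elsewhere in the paper). Using the duality $\|u\|_Y=\sup_{\|g\|_{Y'}\le1}\int_0^1 ug$, the fact that $R_I$ and $H_I$ are mutually associate, and the Hardy--Littlewood inequality $\eqref{eq:HLIneq}$, one obtains
\[
\|H_Ih\|_Y=\sup_{\|g\|_{Y'}\le1}\int_0^1 h(s)(R_Ig)(s)\,\d s\le\sup_{\|g\|_{Y'}\le1}\int_0^1 f^*(s)(R_Ig)^*(s)\,\d s .
\]
Since $R_Ig\le R_Ig^*$ pointwise, so that $\int_0^1 f^*(R_Ig)^*\le\int_0^1 f^*(R_Ig^*)^*$, and $\|g^*\|_{Y'}=\|g\|_{Y'}$, it suffices to show that $\int_0^1 f^*(s)(R_Ig)^*(s)\,\d s\lesssim\|H_If^*\|_Y$ for every nonincreasing $g$ with $\|g\|_{Y'}\le1$. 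I would then bound $(R_Ig)^*\le G_Ig$ and, for each $s\in(0,1)$, pick a (measurable) point $r(s)\in[s,1)$ with $G_Ig(s)\le 2R_Ig(r(s))=\tfrac{2}{I(r(s))}\int_0^{r(s)}g^*$; writing $\int_0^{r(s)}g^*=\int_0^s g^*+\int_s^{r(s)}g^*$ reduces the task to proving $A\lesssim\|H_If^*\|_Y$ and $B\lesssim\|H_If^*\|_Y$, where $A=\int_0^1 \frac{f^*(s)}{I(r(s))}\int_0^sg^*(z)\,\d z\,\d s$ and $B=\int_0^1 \frac{f^*(s)}{I(r(s))}\int_s^{r(s)}g^*(z)\,\d z\,\d s$.

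For the diagonal term $A$, since $r(s)\ge s$ gives $I(r(s))\ge I(s)$, Fubini's theorem recovers $H_I$: $A\le\int_0^1 f^*(s)(R_Ig)(s)\,\d s=\int_0^1 (H_If^*)(t)\,g^*(t)\,\d t\le\|H_If^*\|_Y\|g\|_{Y'}$. For the off\-/diagonal term $B$, I would write $g^*(z)=\widetilde I(z)g^*(z)\cdot\frac{I(z)}{z}$ and invoke $\eqref{eq:00001}$ to get $\int_s^{r(s)}g^*\le\big(\sup_{s\le z<1}\widetilde I(z)g^*(z)\big)\int_0^{r(s)}\tfrac{I(z)}{z}\,\d z\lesssim I(r(s))\,\widetilde I(s)(T_Ig)(s)$, using the identity $\widetilde I(s)(T_Ig)(s)=\sup_{s\le z<1}\widetilde I(z)g^*(z)$. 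Hence $B\lesssim\int_0^1 f^*(s)\,\widetilde I(s)(T_Ig)(s)\,\d s\le\|\widetilde I f^*\|_Y\|T_Ig\|_{Y'}\lesssim\|\widetilde I f^*\|_Y\|g\|_{Y'}$, the last step being exactly the hypothesis $T_I\colon Y'\to Y'$. It then remains to note the elementary bound $\|\widetilde I f^*\|_Y\lesssim\|H_If^*\|_Y$: for $t<\tfrac12$ one has $(H_If^*)(t)\ge\int_t^{2t}\tfrac{f^*(s)}{I(s)}\,\d s\gtrsim\widetilde I(t)f^*(2t)$ (using $I(2t)\lesssim I(t)$), and the claim follows from boundedness of the dilation operator together with $\widetilde I\in\Delta_2$, which holds for quasiconcave $I$. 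This gives $B\lesssim\|H_If^*\|_Y$, completes the reduction, and the final assertion $\|f\|_{X_Y}\approx\|H_If^*\|_Y$ is then Proposition~\ref{prop:OptimalDomain}.

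The main obstacle is that $R_Ig^*$ fails to be monotone, so passing to $f^*$ forces one to control its nonincreasing envelope $G_Ig$; and the ``obvious'' move of estimating $\int_0^1 f^*G_Ig$ by Hölder's inequality against the optimal domain space is circular, since that space is \emph{defined} by the very supremum one is trying to bound. Breaking the circularity is the point of decomposing $G_Ig$ at the moving point $r(s)$: the portion of $g^*$ near the origin honestly regenerates $H_If^*$ through Fubini, while the remaining portion is absorbed by the \emph{a priori} boundedness of $T_I$ on $Y'$. This is precisely where the hypothesis on $T_I$, together with the average\-/type condition $\eqref{eq:00001}$, enters; without it (for instance for $I(t)=t^{1/n'}$ with $Y$ too small) the identity $\|f\|_{X_Y}\approx\|H_If^*\|_Y$ genuinely fails, which also explains why this hypothesis is indispensable here.
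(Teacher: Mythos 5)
Your argument is correct, and it reaches $f^*$ by a different device than the paper. Both proofs share the same skeleton: dualize $\|H_Ih\|_Y$ against $g$ with $\|g\|_{Y'}\le 1$, exploit condition \eqref{eq:00001}, and absorb the supremum-type error through the hypothesis $T_I\colon Y'\to Y'$, finishing with Proposition~\ref{prop:OptimalDomain}. The paper, however, never needs the rearrangement of $R_Ig$: it replaces $g^*$ by $T_Ig$ under the inner integral, rewrites $I(s)\approx\int_0^s\frac{I(r)}{r}\d r$, and observes that $s\mapsto\bigl(\int_0^s\frac{I(t)}{t}\d t\bigr)^{-1}\int_0^s (T_Ig)(t)\d t$ is nonincreasing (an integral mean of a nonincreasing function with respect to $\frac{I(t)}{t}\d t$), so a single application of the Hardy--Littlewood inequality converts $f$ into $f^*$, after which Fubini and $\|T_Ig\|_{Y'}\approx\|g\|_{Y'}$ close the loop. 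You instead apply Hardy--Littlewood at the outset, which forces you to control $(R_Ig)^*$; you do so through the envelope $G_Ig$, splitting at a near-optimal point $r(s)$ into a diagonal piece that regenerates $H_If^*$ by Fubini and an off-diagonal piece dominated by $\widetilde I(s)(T_Ig)(s)$ via \eqref{eq:00001}, at the price of the extra (correct) dilation estimate $\|\widetilde I f^*\|_Y\lesssim\|H_If^*\|_Y$, which the paper's route avoids. Two minor points: the measurable selection $s\mapsto r(s)$ can be dispensed with by bounding $G_Ig(s)\le R_Ig^*(s)+\sup_{s\le r<1}\frac{1}{I(r)}\int_s^r g^*$ and estimating the second supremum uniformly in $r$ by $\widetilde I(s)(T_Ig)(s)$, exactly as in your computation of $B$; and no reduction to bounded $f$ is needed, since every step is an inequality between integrals of nonnegative functions, so Tonelli, Hardy--Littlewood and H\"older apply directly. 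What your route buys is a rather transparent separation of where \eqref{eq:00001} and the $T_I$-hypothesis enter; what the paper's route buys is brevity, since the monotonicity of the weighted mean of $T_Ig$ does the work of your $G_Ig$-splitting and of the auxiliary dilation lemma in one step.
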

\begin{proof}
    First, as $H_I 1\in Y$, $\|\cdot\|_{X_Y}$ is an r.i.~norm by Proposition~\ref{prop:OptimalDomain}. Next, as $T_I$ is bounded on $Y'$, we have $$\|g\|_{Y'}\approx \|T_I g\|_{Y'},\quad g\in\mathcal M_+(0, 1).$$
    We need to check that $$\sup_{h\sim f}\|H_I h\|_Y\lesssim \|H_I f^*\|_Y,\quad f\in\mathcal M_+(0, 1).$$
    To this end, fix $f\in\mathcal M_+(0, 1)$. We estimate
    \begin{align*}
        \left\|\int_t^1 \frac{f(s)}{I(s)}\d s\right\|_Y&=\sup_{\|g\|_{Y'}\leq 1}\int_0^1 g^*(t)\int_t^1 \frac{f(s)}{I(s)}\d s\d t\\
        &=\sup_{\|g\|_{Y'}\leq 1}\int_0^1 \frac{f(s)}{I(s)}\int_0^s g^*(t) \d t\d s\\
        &\leq \sup_{\|g\|_{Y'}\leq 1}\int_0^1 \frac{f(s)}{I(s)}\int_0^s (T_I g)(t) \d t\d s\\
        &\approx \sup_{\|g\|_{Y'}\leq 1}\int_0^1  \frac{f(s)}{\int_0^s \frac{I(r)}{r}\d r}\int_0^s (T_I g)(t) \d t\d s\\
        &\leq \sup_{\|g\|_{Y'}\leq 1}\int_0^1 \frac{f^*(s)}{\int_0^s \frac{I(r)}{r}\d r}\int_0^s (T_I g)(t) \d t\d s\\
        &\approx \sup_{\|T_Ig\|_{Y'}\leq 1}\int_0^1 (T_I g)(t)\int_t^1 \frac{f^*(s)}{I(s)}\d s\d t\\
        &\approx \sup_{\|g\|_{Y'}\leq 1}\int_0^1 g^*(t)\int_t^1 \frac{f^*(s)}{I(s)}\d s\d t = \left\|\int_t^1 \frac{f^*(s)}{I(s)}\d s\right\|_Y.
    \end{align*} Here, the first and second “$\approx$” is the assumption that $I(t)\approx \int_0^t \frac{I(s)}{s}\d s$. In the second inequality we used the Hardy-Littlewood inequality $\eqref{eq:HLIneq}$, and the fact that $$s\mapsto \frac{1}{\int_0^s \frac{I(t)}{t}\d t}\int_0^s (T_I g)(t) \d t=\frac{1}{\int_0^s \frac{I(t)}{t}\d t}\int_0^s \sup_{t\leq r<1}\frac{r}{I(r)}g^*(r) \frac{I(t)}{t}\d t$$ is nonincreasing, being the integral mean of a nonincreasing function with respect to the measure $\frac{I(t)}{t}\d t$.
\end{proof}

We are finally prepared to prove the main theorem of the paper.

\begin{proof}[Proof of Theorem~\ref{thm:main}]
Since $\frac{1}{I}$ is integrable, the optimal domain space $X_Z$ exists for every r.i.~space $Z$ by Proposition~\ref{prop:OptimalDomain}. By Proposition~\ref{prop:domainInt}, $S_I$ is bounded on $X_Y'$. This, by Theorem~\ref{thm:Norm2}, means that 
\begin{equation}\label{eq:Optimality1}
\|f\|_{Y_{X_Y}}\approx\left\|\frac{I(t)}{t}(f^{**}(t)-f^*(t))\right\|_{X_Y}+\|f\|_1,\quad f\in\mathcal M_+(0, 1).
\end{equation} Now, given an r.i.~space $Z$, operator $T_I$ is bounded on $Y_Z'$ by virtue of Theorem~\ref{cor:TIMIsBoundedOnTarget}. Hence, by Theorem~\ref{thm:hvezdickadomeny}, we have
\begin{equation}\label{eq:Optimality2}
\|f\|_{X_{Y_Z}}\approx \left\|\int_t^1 \frac{f^*(s)}{I(s)}\d s\right\|_{Y_Z},\quad f\in\mathcal M_+(0, 1).
\end{equation}

Assuming now that $X$ is optimal for some $Y$, we have that $X=X_Y$. Consequently $\eqref{eq:Optimality1}$ turns into $\eqref{eq:Optimality0}$ and $S_I$ is bounded $X'$.

On the contrary, assuming $S_I$ is bounded on $X'$, we obtain validity of $\eqref{eq:Optimality0}$ by Theorem~\ref{thm:Norm2}. We show that $X_{Y_X}= X$. To this end, let $f\in\mathcal M_+(0, 1)$ be given. Then
\begin{equation}\label{eq:Optimality3}
\begin{split}
\|f\|_{X_{Y_X}}&\approx\left\|\int_t^1 \frac{f^*(s)}{I(s)}\d s\right\|_{Y_X}\\
	&\approx\left\| \frac{I(t)}{t}\left(\frac{1}{t}\int_0^t\int_s^1 \frac{f^*(r)}{I(r)}\d r \d s-\int_t^1 \frac{f^*(s)}{I(s)}\d s\right)\right\|_X+\int_0^1\int_t^1 \frac{f^*(s)}{I(s)}\d s \d t\\
&= \left\|\frac{I(t)}{t^{2}}\int_0^t\frac{r}{I(r)}f^*(r)\d r\right\|_X+\int_0^1\int_t^1 \frac{f^*(s)}{I(s)}\d s \d t,\\
\end{split}
\end{equation} because, by Fubini's theorem,
$$\frac{1}{t}\int_0^t\int_s^1 \frac{f^*(r)}{I(r)}\d r \d s-\int_t^1 \frac{f^*(s)}{I(s)}\d s=\frac{1}{t}\int_0^t\frac{s}{I(s)}f^*(s)\d s.$$
Further,
\begin{equation}\label{eq:Optimality4}
\begin{split}
\int_0^1\int_t^1 \frac{f^*(s)}{I(s)}\d s \d t&\leq \int_0^1 f^*(t)\int_t^1\frac{\d s}{I(s)}\d t\leq \int_0^1 f^*(t)\d t\int_0^1 \frac{\d s}{I(s)}\lesssim \|f\|_1\lesssim \|f\|_X.
\end{split}
\end{equation}
Observe that the operator $R$ defined by
$$Rf(t)=\frac{I(t)}{t^{2}}\int_0^t \frac{s}{I(s)}f(s)\d s,\quad f\in\mathcal M_+(0, 1), t\in (0, 1),$$ is associate to the operator $H$ defined in $\eqref{eq:OperatorH}$. Now, quasiconcavity of $I$ implies that $$\int_0^t \frac{s}{I(s)}\d s\approx \frac{t^2}{I(t)},\quad t\in(0, 1).$$ Therefore, for the operator $R'$ defined by $$R'f(t)=\frac{1}{\int_0^t\frac{s}{I(s)}\d s}\int_0^t \frac{s}{I(s)}f(s)\d s,\quad f\in\mathcal M_+(0, 1), t\in (0, 1),$$ we have $$Rf(t)\approx R'f(t),\quad f\in\mathcal M_+(0, 1), t\in (0, 1).$$ The advantage of the operator $R'$ is that $R'f^*$ is nonincreasing for every $f\in\mathcal M_+(0, 1)$, being the integral mean of $f^*$ with respect to the measure $\frac{t}{I(t)}\d t$. Hence, using $\eqref{eq:Hjeleqthg}$, we have

\begin{align*}
    \|Rf^*\|_X&\approx \|R'f^*\|_X=\sup_{\|g\|_{X'}\leq 1}\int_0^1 R'f^*(t)g^*(t)\d t\approx\sup_{\|g\|_{X'}\leq 1}\int_0^1 Rf^*(t)g^*(t)\d t\\
    &=\sup_{\|g\|_{X'}\leq 1}\int_0^1 f^*(t)Hg^*(t)\d t
    \lesssim \sup_{\|g\|_{X'}\leq 1}\int_0^1 f^*(t)g^*(t)\d t=\|f^*\|_X,
\end{align*} for every $f\in\mathcal M_+(0, 1)$.

We also see that
\begin{align*}
\frac{I(t)}{t^{2}}\int_0^t\frac{r}{I(r)}f^*(r)\d r\geq f^*(t)\cdot\frac{I(t)}{t^2}\int_0^t \frac{r}{I(r)}\d r\geq f^*(t)\cdot\frac{I(t)}{t^2}\int_{\frac{t}{2}}^t \frac{r}{I(r)}\d r\approx f^*(t),
\end{align*} as $I\in\Delta_2$.
Thus $$\left\|\frac{I(t)}{t^{2}}\int_0^t\frac{r}{I(r)}f^*(r)\d r\right\|_X\approx \|f\|_X,\quad f\in\mathcal M_+(0, 1).$$ Combining this with $\eqref{eq:Optimality3}$ and $\eqref{eq:Optimality4}$ yields
$$\|f\|_{X_{Y_X}}\approx \|f\|_X.$$

Turning our attention to the second part, the assumption on optimality of $Y$ for some space $X$ implies $Y=Y_X$. Hence $T_I$ is bounded on $Y'$ and $\eqref{eq:Optimality2}$ turns into $\eqref{eq:Optimality01}$.

In the other direction, let us assume that $T_I$ is bounded on $Y'$. We show that $Y=Y_{X_Y}$, which shows optimality of $Y$ for some space -- $X_Y$ in fact. 

For $t\in (0, \frac{1}{2})$ we estimate
\begin{equation*}
\begin{split}
\int_t^1\frac{I(s)}{s^{2}}(f^{**}(s)-f^*(s))\d s&=\int_t^1 \frac{I(s)}{s^{3}}\int_0^s f^*(r)\d r\d s-\int_t^1\frac{I(s)}{s^{2}}f^*(s)\d s\\
	&=\int_t^1 \frac{I(s)}{s^{3}}\int_0^t f^*(r)\d r\d s+\int_t^1\frac{I(s)}{s^{3}}\int_t^s f^*(r)\d r\d s-\int_t^1\frac{I(s)}{s^{2}}f^*(s)\d s\\
	&\geq \int_t^1 \frac{I(s)}{s^{3}}\int_0^tf^*(r)\d r\d s+\int_t^1\frac{I(s)}{s^{3}}(s-t)f^*(s)\d s -\int_t^1\frac{I(s)}{s^{2}}f^*(s)\d s\\
	&=\int_t^1\frac{I(s)}{s^{3}}\int_0^t f^*(r)\d r\d s-t\int_t^1\frac{I(s)}{s^{3}}f^*(s)\d s\\
	&\geq t\int_t^1\frac{I(s)}{s^3}f^{**}(t)\d s-tf^*(t)\int_t^1 \frac{I(s)}{s^3}\d s\\
	&\geq t\int_t^{2t}\frac{I(s)}{s^{3}}\d s(f^{**}(t)-f^*(t))\approx \frac{I(t)}{t}(f^{**}(t)-f^*(t)).
 \end{split}
 \end{equation*}
Using this, $\eqref{eq:Optimality1}$ and Theorem~\ref{thm:hvezdickadomeny}, we have
\begin{equation}\label{eq:Optimality5}
\begin{split}
\|f\|_{Y_{X_Y}}&\approx\left\|\frac{I(t)}{t}(f^{**}(t)-f^*(t))\right\|_{X_Y}+\|f\|_1\\
	&\lesssim \left\|\int_t^1\frac{I(s)}{s^{2}}(f^{**}(s)-f^*(s))\d s\right\|_{X_Y}+\|f\|_1\\
	&\approx \left\|\int_t^1 \frac{1}{I(s)}\int_s^1\frac{I(r)}{r^{2}}(f^{**}(r)-f^*(r))\d r\d s\right\|_Y+\|f\|_1.
\end{split}
\end{equation} Now, for every $h\in\mathcal M_+(0, 1)$, we have
\begin{equation}\label{eq:7894616}
\begin{split}
\int_t^1 \frac{1}{I(s)}\int_s^1 h(r)\frac{{\rm d} r}{r}\d s&=\int_t^1\frac{h(r)}{r}\int_t^r \frac{{\rm d}s}{I(s)}\d r\leq \int_t^1\frac{h(r)}{r}\int_0^r \frac{{\rm d}s}{I(s)}\d r\\
	&\lesssim \int_t^1\frac{h(r)}{r}\cdot\frac{r}{I(r)}\d r=\int_t^1\frac{h(r)}{I(r)}\d r,
\end{split}
\end{equation} where the first inequality is the average property of $I$.
Further,
\begin{equation}\label{eq:54566}
\begin{split}
\int_t^1 \frac{1}{s^2}\int_0^s h(r)\d r\d s&=\int_t^1\frac{1}{s^2}\int_0^t h(y)\d y\d s+\int_t^1\frac{1}{s^2}\int_t^sh(r)\d r\d s\\
	&\leq \int_0^t h(y)\d y\int_t^1 \frac{{\rm d} s}{s^2}+\int_t^1 h(r)\int_r^1\frac{{\rm d} s}{s^2}\d r\\
 &\leq \frac{1}{t}\int_0^t h(r)\d r+\int_t^1 \frac{h(r)}{r}\d r.
\end{split}
\end{equation} Deploying $\eqref{eq:7894616}$ for $h(r)=\frac{I(r)}{r}(f^{**}(r)-f^*(r))$ and $\eqref{eq:54566}$ for $h(r)=f^*(r)$ in this order in $\eqref{eq:Optimality5}$ yields
\begin{equation}\label{eq:koneccc}
\begin{split}
\|f\|_{Y_{X_Y}}&\lesssim\left\|\int_t^1\frac{1}{s}(f^{**}(s)-f^*(s))\d s\right\|_Y+\|f\|_1\\
	&=\left\|\int_t^1\frac{1}{s^2}\int_0^s f^*(r)\d r\d s-\int_t^1f^*(s)\frac{{\rm d} s}{s}\right\|_Y+\|f\|_1\\
	&\leq \|f^{**}\|_Y+\|f\|_1.
\end{split}
\end{equation} 
We now claim that $T_I\colon Y'\to Y'$ implies $Y\subset \Lambda_I$. In the proof of Proposition~\ref{lem:INTTargetSpace} we showed that $\Lambda_I'=m_{\widetilde I}$ whenever $I$ satisfies $\eqref{eq:00001}$. Therefore, we can equivalently show that $Y'\supset m_{\widetilde I}$. Now, for every $f\in m_{\widetilde I}$ there exists $c\geq 0$ such that $f^*(t)\leq c\frac{I(t)}{t}, t\in (0, 1)$. Thus, from the lattice property of the space $Y'$, it suffices to show that $t\mapsto \frac{I(t)}{t}\in Y'$. We observe that $(T_I 1)(t)=\frac{I(t)}{t}, t\in (0, 1)$, and so, using the boundedness of the operator $T_I$ on $Y'$, we have 
$$
\left\|\frac{I(t)}{t}\right\|_{Y'}=\|T_I 1\|_{Y'}\lesssim \|1\|_{Y'}<\infty.
$$ 
Hence, combining Proposition~\ref{lem:INTTargetSpace} with ~\cite[Theorem 10.3.12]{FS} (note that ~\eqref{eq:BoundedMaximalOp} is satisfied), we conclude that $f\mapsto f^{**}$ is bounded on $Y$. Adding this piece of information to $\eqref{eq:koneccc}$, we obtain $$\|f\|_{Y_{X_Y}}\lesssim \|f\|_Y.$$ As $Y_{X_Y}\subset Y$ holds trivially, the proof is complete.
\end{proof}


\section{Applications to Sobolev embeddings}\label{SobEmb}
In this section we come back to conditions which we used throughout the paper and see which functions satisfy them. As the function $I$ plays the role of the isoperimetric profile of a domain, we will be mainly interested in two types of domains. Firstly, those whose isoperimetric profile is related to polynomials $I(t)=t^\alpha, \alpha\in \left[\frac{1}{n'}, 1\right)$ and, secondly, product probability spaces. To this end, we check which conditions are satisfied. We have already stated in the preliminary section that polynomials belong to class $\mathcal Q$.

We shall first summarize relevant observations concerning polynomials.
\begin{prop}[Polynomials $t^\alpha$ for $\alpha \in (0, 1)$]\label{prop:polynomials}
    Let $\alpha\in (0, 1)$ be given. Then $I(t)=t^\alpha$ belongs to class $\mathcal Q$. To be precise, for the constants $c$ and $d$ from the Definition~\ref{def:Conditions} we have $c=\frac{1}{2-\alpha}$ and $d=\frac{1}{1-\alpha}$. Consequently, the conclusion of Theorem~\ref{thm:main} is valid for such a choice of function $I$.
\end{prop}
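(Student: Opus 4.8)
The plan is to verify, in order, all the requirements of Definition~\ref{def:Conditions}, exploiting that for $I(t)=t^\alpha$ every integral appearing there is an elementary power integral and can be computed exactly rather than merely estimated.

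First I would dispose of the structural and integral conditions. Since $0<\alpha<1$, the map $t\mapsto t^\alpha$ is an increasing self-bijection of $(0,1)$ with $t\mapsto t^{\alpha-1}=I(t)/t$ decreasing, so $I$ is a quasiconcave bijection. Then $\int_0^t\frac{I(s)}{s}\,ds=\int_0^t s^{\alpha-1}\,ds=\frac1\alpha I(t)$ gives \eqref{eq:00001}; $\int_0^t\frac{ds}{I(s)}=\int_0^t s^{-\alpha}\,ds=\frac{1}{1-\alpha}\widetilde I(t)$, which is finite since $\alpha<1$, gives the average property \eqref{eq:00002}; and $\int_t^1\frac{I(s)}{s^2}\,ds=\int_t^1 s^{\alpha-2}\,ds=\frac{t^{\alpha-1}-1}{1-\alpha}\le\frac{1}{1-\alpha}\frac{I(t)}{t}$, which together with \eqref{eq:00001} and the reformulation of \eqref{eq:BoundedMaximalOp} recorded just after its statement yields \eqref{eq:BoundedMaximalOp}, and at the same time shows $d\le\frac{1}{1-\alpha}$.

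Next I would pin down the two constants. The inequality defining $d$ reads, after division by $t^{\alpha-1}>0$, as $\frac{1-t^{1-\alpha}}{1-\alpha}\le d$; since $t\mapsto t^{1-\alpha}$ increases on $(0,1)$, the left-hand side decreases with supremum $\frac{1}{1-\alpha}$ approached only as $t\to0^+$, so the least admissible value is $d=\frac{1}{1-\alpha}$. For $c$ one has $\frac{I(t)}{t^2}-1=t^{\alpha-2}-1>0$ and $\int_t^1\frac{I(s)}{s^3}\,ds=\int_t^1 s^{\alpha-3}\,ds=\frac{t^{\alpha-2}-1}{2-\alpha}$, so the condition defining the set in \eqref{eq:Condition1}, namely $\lambda(t^{\alpha-2}-1)\le\frac{t^{\alpha-2}-1}{2-\alpha}$, is, upon dividing by $t^{\alpha-2}-1$, equivalent to $\lambda\le\frac{1}{2-\alpha}$ \emph{uniformly in $t$}; hence $c=\frac{1}{2-\alpha}$, which lies in $[\tfrac12,1)$ in accordance with Remark~\ref{rem:constC}.

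Finally, I would check $(1-c)d\le c$: with the values above, $1-c=\frac{1-\alpha}{2-\alpha}$, so $(1-c)d=\frac{1-\alpha}{2-\alpha}\cdot\frac{1}{1-\alpha}=\frac{1}{2-\alpha}=c$, so the inequality holds, in fact with equality. This completes the verification that $I(t)=t^\alpha\in\mathcal Q$, whence the conclusion of Theorem~\ref{thm:main} applies to this $I$. There is no genuine obstacle here: the proof is pure calculus, and the only point demanding a little care is that $c$ is defined as a supremum and $d$ as an infimum, so pinning down their \emph{exact} values (rather than merely convenient admissible ones) requires a short sharpness check as $t\to0^+$ — though the one-sided bounds $c\ge\frac{1}{2-\alpha}$ and $d\le\frac{1}{1-\alpha}$ already suffice for $(1-c)d\le c$ and hence for membership in $\mathcal Q$.
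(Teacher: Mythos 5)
Your proposal is correct and is essentially the proof the paper intends: the paper states this proposition without an explicit proof, treating it as an elementary power-integral computation, and your verification supplies exactly that. All your calculations check out — the quasiconcave bijection property, conditions \eqref{eq:00001}, \eqref{eq:00002} and \eqref{eq:BoundedMaximalOp} with explicit constants, the sharpness arguments pinning down $c=\frac{1}{2-\alpha}$ (consistent with Remark~\ref{rem:constC}) and $d=\frac{1}{1-\alpha}$, and the resulting equality $(1-c)d=c$.
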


We now show that the isoperimetric function of product probability spaces satisfy the condition $\eqref{eq:00001}$. Let us consider $\Phi\colon[0, \infty)\to [0, \infty)$. Assume $\Phi$ is twice differentiable, strictly increasing and convex in $(0, \infty)$, $\sqrt{\Phi}$ is concave  and $\Phi(0)=0$. Define further a measure on $\Rea$ by 
\begin{equation*}
\d \mu_\Phi (x)=c_\Phi e^{-\Phi(\abs{x})}\d x,
\end{equation*} where $c_\Phi$ is such that $\mu_\Phi(\Rea)=1$.

We then define its \emph{product measure} $\mu_{\Phi, n}$ on $\Rea^n$ as$$\mu_{\Phi, n}=\underbrace{\mu_\Phi\times\dots \times \mu_\Phi}_{n\text{-times}}.$$

Then $(\Rea^n, \mu_{\Phi, n})$ is a probability space.

It is further known by \cite[Chapter 7]{CiPiSl} that
\begin{equation}\label{eq:ProbSpaceIsoFnction}
    I(t)=I_{\Rea^n, \mu_{\Phi, n}}(t)\approx t\Phi'\left(\Phi^{-1}\left(\log\frac{2}{t}\right)\right),\quad t\in \left(0, \frac{1}{2}\right).
\end{equation} 
Note that $I$ is quasiconcave function -- $I$ being nondecreasing is proved in \cite[Lemma 11.1]{CiPiSl}, while $t\mapsto \frac{I(t)}{t}$ being nonincreasing follows from the fact that $\Phi$ is increasing and convex. 
We now check that $I'(t)\approx \frac{I(t)}{t}$ on some right neighbourhood of $0$. This implies that $I$ satisfies condition $\eqref{eq:00001}$.

First, we observe that $\Phi\in\Delta_2$. Indeed, to see this, recall that $\sqrt{\Phi}$ is concave and so $\sqrt{\Phi}\in\Delta_2$. We estimate $$\Phi(2t)=\left(\sqrt{\Phi(2t)}\right)^2\leq \left(c\sqrt{\Phi(t)}\right)^2=c^2\Phi(t),\quad t\in (0, \infty).$$

As $\Phi$ is an increasing convex function with $\Phi(0)=0$, we write $$\Phi(t)=\int_0^t\Phi'(s)\d s,\quad t\in (0, \infty).$$ Convexity tells us that $\Phi'$ is nondecreasing and so, for every $t\in (0, \infty)$, we have $$\Phi(t)=\int_0^t \Phi'(s)\d s\leq t\Phi'(t)\leq \int_t^{2t}\Phi'(s)\d s\leq \Phi(2t).$$
Combining with the knowledge that $\Phi\in\Delta_2$, we obtain $$\Phi'(t)\approx \frac{\Phi(t)}{t},\quad t\in (0, \infty).$$ Plugging this new information into $\eqref{eq:ProbSpaceIsoFnction}$, we have 
\begin{equation}\label{eq:ProbSpace2}
I(t)\approx \frac{t\log\frac{2}{t}}{\Phi^{-1}\left(\log\frac{2}{t}\right)},\quad t\in \left(0, \frac{1}{2}\right),
\end{equation} and so we may as well consider the very last expression to be the representative of $I$. Differentiating, we get 
\begin{align*}
    I'(t)&= \frac{\left(\log\frac{2}{t}-1\right)\Phi^{-1}\left(\log\frac{2}{t}\right)-\frac{1}{\Phi'\left(\Phi^{-1}\left(\log\frac{2}{t}\right)\right)}\cdot \frac{-1}{t}\cdot t\log\frac{2}{t}}{\left(\Phi^{-1}\left(\log\frac{2}{t}\right)\right)^2}\\
    &=\underbrace{\frac{\left(\log\frac{2}{t}-1\right)}{\Phi^{-1}\left(\log\frac{2}{t}\right)}}_{A(t)}+\underbrace{\frac{\log\frac{2}{t}}{\Phi'\left(\Phi^{-1}\left(\log\frac{2}{t}\right)\right)\cdot \left(\Phi^{-1}\left(\log\frac{2}{t}\right)\right)^2}}_{B(t)},\quad t\in \left(0, \frac{1}{2}\right).
\end{align*} As for $A$, we simply note that $\log\frac{2}{t}-1\approx \log\frac{2}{t}$ on some right neighbourhood of $0$, because $\lim_{t\to 0^+}\log\frac{2}{t}=\infty.$ Therefore, it follows that $A(t)\approx \frac{I(t)}{t}$. Using both $\eqref{eq:ProbSpaceIsoFnction}$ and $\eqref{eq:ProbSpace2}$, we obtain $$B(t)\approx \frac{\log\frac{2}{t}}{\frac{I(t)}{t}}\cdot \left(\frac{I(t)}{t\log\frac{2}{t}}\right)^2=\frac{I(t)}{t\log\frac{2}{t}}.$$ Finally, observing that $\lim_{t\to 0^+} \frac{B(t)}{A(t)}=0$, we conclude that $B$ is negligible.

We have therefore proved the following proposition.
\begin{prop}\label{Prop:PPSI}
    Let $I$ be as in $\eqref{eq:ProbSpaceIsoFnction}$. Then $I$ satisfies condition $\eqref{eq:00001}$.
\end{prop}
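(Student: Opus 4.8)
The plan is to deduce \eqref{eq:00001} from the stronger pointwise estimate $I'(t)\approx \frac{I(t)}{t}$ valid on some right neighbourhood $(0,t_0)$ of the origin. Once this is available, then for $t\in(0,t_0)$ we have $\frac{I(s)}{s}\lesssim I'(s)$ on $(0,t)$, so
\[
\int_0^t\frac{I(s)}{s}\d s\lesssim\int_0^t I'(s)\d s=I(t)-I(0+)=I(t)
\]
since $I(0+)=0$; in particular $\int_0^{t_0}\frac{I(s)}{s}\d s<\infty$, and because $s\mapsto\frac{I(s)}{s}$ is nonincreasing the full integral $C:=\int_0^1\frac{I(s)}{s}\d s$ is finite with $C\lesssim I(t_0)$. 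Hence for $t\in[t_0,1)$ we get $\int_0^t\frac{I(s)}{s}\d s\le C\lesssim I(t_0)\le I(t)$ by monotonicity of $I$. So it suffices to prove $I'\approx\frac{I}{t}$ near $0$.

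The first step is to pass to a convenient representative of $I$. Since $\sqrt\Phi$ is concave with $\sqrt{\Phi(0)}=0$ it is subadditive, so $\sqrt{\Phi(2t)}\le2\sqrt{\Phi(t)}$ gives $\Phi(2t)\le4\Phi(t)$, and together with monotonicity $\Phi\in\Delta_2$. Convexity makes $\Phi'$ nondecreasing, so
\[
\Phi(t)=\int_0^t\Phi'(s)\d s\le t\Phi'(t)\le\int_t^{2t}\Phi'(s)\d s\le\Phi(2t)\approx\Phi(t),
\]
whence $\Phi'(t)\approx\frac{\Phi(t)}{t}$ on $(0,\infty)$. Evaluating this at $u=\Phi^{-1}\!\left(\log\frac2t\right)$, where $\Phi(u)=\log\frac2t$, and substituting into \eqref{eq:ProbSpaceIsoFnction} yields
\[
I(t)\approx t\,\Phi'\!\left(\Phi^{-1}\!\left(\log\tfrac2t\right)\right)\approx\frac{t\log\frac2t}{\Phi^{-1}\!\left(\log\frac2t\right)},\qquad t\in\left(0,\tfrac12\right),
\]
and since \eqref{eq:00001} is insensitive to equivalence we may work with the right-hand side as $I$.

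The second step is to differentiate this representative and isolate the dominant term. Write $L=L(t)=\log\frac2t$ (so $L'=-\frac1t$ and $L\to\infty$ as $t\to0^+$) and $\psi=\Phi^{-1}$ (so $\psi'(L)=\frac{1}{\Phi'(\psi(L))}$). The quotient rule applied to $I(t)=\frac{tL}{\psi(L)}$ gives
\[
I'(t)=\underbrace{\frac{L-1}{\psi(L)}}_{A(t)}+\underbrace{\frac{L}{\Phi'(\psi(L))\,\psi(L)^2}}_{B(t)}.
\]
Since $L-1\approx L$ near $0$ and $\frac{I(t)}{t}\approx\frac{L}{\psi(L)}$, we get $A(t)\approx\frac{I(t)}{t}$. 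For $B(t)$, reading off $\Phi'(\psi(L))\approx\frac{I(t)}{t}$ and $\psi(L)\approx\frac{tL}{I(t)}$ from the representative above, a short computation gives $B(t)\approx\frac{I(t)}{tL}$, so that $\frac{B(t)}{A(t)}\approx\frac1L\to0$ as $t\to0^+$. Hence $I'(t)\approx A(t)\approx\frac{I(t)}{t}$ on a right neighbourhood of $0$, which is what was needed.

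The only genuinely delicate point — hence the expected main obstacle — is the asymptotic bookkeeping in the last step: correctly tracking the nested compositions $\Phi^{-1}\!\left(\log\frac2t\right)$ and confirming that the equivalences $\Phi'\approx\Phi/t$, $A\approx I/t$, and the estimate $B\approx\frac{I}{tL}$ all propagate so that the subordinate term $B$ is genuinely negligible against $A$. The remaining ingredients ($\Phi\in\Delta_2$, the elementary two-sided bound on $\Phi$, and the reduction of \eqref{eq:00001} to the derivative estimate) are routine.
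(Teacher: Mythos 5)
Your proof is correct and takes essentially the same route as the paper's: prove $\Phi\in\Delta_2$, deduce $\Phi'(t)\approx\frac{\Phi(t)}{t}$, pass to the representative $I(t)\approx\frac{t\log\frac2t}{\Phi^{-1}(\log\frac2t)}$, and differentiate to see that the term $B$ is negligible against $A$, giving $I'(t)\approx\frac{I(t)}{t}$ near zero. The only difference is that you also spell out the routine reduction of \eqref{eq:00001} to this derivative estimate (including the tail estimate via monotonicity of $I$ and of $\frac{I(s)}{s}$), a step the paper simply asserts.
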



Recall that Maz'ya classes of domains $\mathcal J_\alpha$ for $\alpha\in\left[\frac{1}{n'}, 1\right)$ are defined as $$\mathcal J_\alpha=\left\{\Omega : I_{\Omega}(t)\gtrsim t^\alpha, t\in \left[0, \frac{1}{2}\right]\right\}.$$
Since $I(t)=t^\alpha$ enjoys the average property, we have, by virtue of \cite[Proposition 8.6]{CiPiSl},  $$\|R_I^m f\|_X\approx \left\|\frac{t^{m-1}}{I(t)^m}\int_0^t f(s)\d s\right\|_X,\quad f\in\mathcal M_+(0, 1),$$
and $$\|H_I^m f\|_X\approx \left\|\int_t^1 \frac{s^{m-1}}{I(s)^m}f(s)\d s\right\|_X,\quad f\in\mathcal M_+(0, 1),$$ for every r.i.~space $X$. Define a function $J\colon (0, 1)\to (0, \infty)$ by  $$J(t)=\frac{I(t)^m}{t^{m-1}}=t^{1-m(1-\alpha)},\quad t\in (0, 1).$$ From here, we see that whenever $1-m(1-\alpha)>0$, then $J(t)$ is an increasing, strictly concave bijection of $(0, 1)$ onto itself. Proposition~\ref{prop:polynomials} therefore asserts that the function $J\in\mathcal Q$. Therefore, for this particular choice of $I$, Theorem~\ref{thm:main} reads as follows:

\begin{thm}\label{cor:HI&SO}
    Let $\alpha\in \left[\frac{1}{n'}, 1\right)$ be given and let $m\in\Nat$ be such that $1-m(1-\alpha)>0$. Put $J(t)=t^{1-m(1-\alpha)}$ for $t\in (0, 1)$. Then an r.i.~space $X$ is the optimal domain space under the map $H_I^m$ for some r.i.~space $Y$ if and only if $S_J$ is bounded on $X'$. In that case,
\begin{equation*}
\|f\|_{Y_X}\approx \left\|\frac{J(t)}{t}(f^{**}(t)-f^*(t))\right\|_X+\|f\|_1,\quad f\in\mathcal M_+(0, 1).
\end{equation*}

Vice versa, an r.i.~space $Y$ is the optimal target space  under the map $H_I^m$ for some r.i.~space $X$ if and only if $T_J$ is bounded on $Y'$. In that case,
\begin{equation*}
\|f\|_{X_Y}\approx \left\|\int_t^1 \frac{f^*(s)}{J(s)}\d s\right\|_{Y},\quad f\in\mathcal M_+(0, 1).
\end{equation*}
\end{thm}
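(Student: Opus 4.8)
The plan is to deduce Theorem~\ref{cor:HI&SO} from Theorem~\ref{thm:main} applied to the auxiliary function $J$ rather than to $I=t^\alpha$ directly. First I would record the two reductions already set up in the text: by \cite[Proposition 8.6]{CiPiSl} (valid because $I(t)=t^\alpha$ has the average property) we have, for every r.i.~space $X$,
\begin{equation*}
\|R_I^m f\|_X\approx\left\|\frac{t^{m-1}}{I(t)^m}\int_0^t f(s)\d s\right\|_X=\|R_J f\|_X
\quad\text{and}\quad
\|H_I^m f\|_X\approx\left\|\int_t^1\frac{s^{m-1}}{I(s)^m}f(s)\d s\right\|_X=\|H_J f\|_X,
\end{equation*}
where $J(t)=I(t)^m/t^{m-1}=t^{1-m(1-\alpha)}$. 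Since $1-m(1-\alpha)>0$, $J$ is an increasing strictly concave bijection of $(0,1)$ onto itself, and by Proposition~\ref{prop:polynomials} we get $J\in\mathcal Q$ (with exponent $\alpha_J=1-m(1-\alpha)$). The point of these equivalences is that the mapping properties of $H_I^m$ between r.i.~spaces coincide, up to equivalence of norms, with those of the first-order operator $H_J$; in particular an r.i.~space $X$ is the optimal domain space for some $Y$ under $H_I^m$ if and only if it is the optimal domain space for some $Y$ under $H_J$, and likewise on the target side, and the optimal partner spaces themselves are literally the same (equality of spaces, not merely isomorphism, since equivalence of norms does not change the underlying r.i.~space).

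Next I would invoke Theorem~\ref{thm:main} with $J$ in place of $I$. This is legitimate precisely because $J\in\mathcal Q$, which is exactly the hypothesis of that theorem. It yields: $X$ is optimal domain for some $Y$ under $H_J$ iff $S_J$ is bounded on $X'$, in which case
\begin{equation*}
\|f\|_{Y_X}\approx\left\|\frac{J(t)}{t}(f^{**}(t)-f^*(t))\right\|_X+\|f\|_1,
\end{equation*}
and dually $Y$ is optimal target for some $X$ under $H_J$ iff $T_J$ is bounded on $Y'$, in which case
\begin{equation*}
\|f\|_{X_Y}\approx\left\|\int_t^1\frac{f^*(s)}{J(s)}\d s\right\|_Y.
\end{equation*}
Combining these with the reductions from the first paragraph gives the statement verbatim, since the optimal domain (resp. target) space under $H_I^m$ equals the optimal domain (resp. target) space under $H_J$.

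The only genuinely delicate point is the one I would spell out carefully: verifying that ``optimal domain/target space under $H_I^m$'' and ``under $H_J$'' really are the same notion, i.e.\ that the norm equivalences $\|H_I^m f\|_X\approx\|H_J f\|_X$ transfer the optimality property. This is immediate from the definition of optimal domain/target space, which only refers to the class of r.i.~spaces $Z$ for which the relevant boundedness $H\colon Z\to Y$ (or $H\colon X\to Z$) holds, together with the fact that $\emb$ between r.i.~spaces is insensitive to replacing a norm by an equivalent one. The remaining content — that $J\in\mathcal Q$ and the explicit constants — is already established in Proposition~\ref{prop:polynomials}, so no further computation is needed; one should just note in passing that $m\in\Nat$ and $1-m(1-\alpha)>0$ together force $\alpha\in\left[\tfrac1{n'},1\right)$ to be compatible with at least $m=1$, and that for $m=1$ one recovers Theorem~\ref{thm:main} for $I=t^\alpha$ itself, consistent with the John-domain case from \cite{KePi2009}.
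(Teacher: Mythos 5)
Your proposal is correct and follows essentially the same route as the paper: the text preceding the theorem reduces $H_I^m$ to the first-order operator $H_J$ with $J(t)=I(t)^m/t^{m-1}=t^{1-m(1-\alpha)}$ via the norm equivalences from \cite[Proposition 8.6]{CiPiSl}, notes $J\in\mathcal Q$ by Proposition~\ref{prop:polynomials}, and then simply reads off Theorem~\ref{thm:main} for $J$. Your explicit remark that the norm equivalences transfer the notion of optimal domain/target space is a point the paper leaves implicit, but it is the same argument.
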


Recall that a bounded open $\Omega\subset\Rea^n$ is said to be a \emph{John domain}, if there exists $x_0\in\Omega$ and $c>0$ such that for every $x\in\Omega$ there exists a rectifiable curve $\gamma_x\colon [0, l]\to \Omega$, parametrized by its arclength, such that $${\rm dist}(\gamma_x(t), \partial\Omega)\geq ct,\quad t\in [0, l].$$ It is known that for every John domain $\Omega$ we have $I_\Omega (t)\approx t^\frac{1}{n'}$. Note that every Lipschitz domain is also a John domain. 

Now, given $\Omega\subset\Rea^n$ a John domain, \cite[Theorem 6.1]{CiPiSl} asserts that $$H_{I_\Omega}^m\colon X\to Y\iff V^mX\to Y$$ whenever $X$ and $Y$ are r.i.~spaces. Combining this with Theorem~\ref{cor:HI&SO}, we obtain the following.
\begin{thm}\label{thmmasd}
    Let $\Omega\subset\Rea^n$ be a John domain. Let $m\in\Nat, m<n$ be given and put $J(t)=t^{1-\frac{m}{n}}$ for $t\in (0, 1)$. Then an r.i.~space $X(\Omega)$ is the optimal domain space in the $m$-th order Sobolev embedding for some r.i.~space $Y(\Omega)$ if and only if $S_J$ is bounded on $X'$. In that case, $$\|f\|_{Y_X}\approx \left\|t^{-\frac{m}{n}}(f^{**}(t)-f^*(t))\right\|_X+\|f\|_1,\quad f\in\mathcal M_+(0, 1).$$

    Vice versa, an r.i.~space $Y(\Omega)$ is the optimal target space in the $m$-th order Sobolev embedding for some r.i.~space $X(\Omega)$, if and only if $T_J$ is bounded on $Y'$, in which case $$\|f\|_{X_Y}\approx \left\|\int_t^1 s^{\frac{m}{n}-1}f^*(s)\d s\right\|_{Y},\quad f\in\mathcal M_+(0, 1).$$
\end{thm}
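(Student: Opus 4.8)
The statement is a direct corollary of Theorem~\ref{cor:HI&SO} combined with the John-domain equivalence $H_{I_\Omega}^m\colon X\to Y\iff V^mX\to Y$ from \cite[Theorem 6.1]{CiPiSl}. The plan is as follows. First I would recall that for a John domain one has $I_\Omega(t)\approx t^{\frac1{n'}}$, i.e. the isoperimetric profile behaves like the polynomial with exponent $\alpha=\frac1{n'}$; since $\frac mn<1$ is equivalent to $m<n$, the hypothesis $m<n$ guarantees $1-m(1-\alpha)=1-\frac mn>0$, so Theorem~\ref{cor:HI&SO} applies with this $\alpha$ and with $J(t)=t^{1-m(1-\alpha)}=t^{1-\frac mn}$. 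This is the only place the numerical restriction is used.

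Next I would translate the Sobolev embedding into an embedding for the operator $H_{I_\Omega}^m$. By \cite[Theorem 6.1]{CiPiSl}, $X(\Omega)$ is the optimal domain space in the $m$-th order Sobolev embedding for a target $Y(\Omega)$ precisely when $X$ is the optimal domain space for $Y$ under the map $H_{I_\Omega}^m$, and dually for optimal targets; so the notions of optimality transfer verbatim between the two settings. Here one should note that $H_{I_\Omega}^m$ and $H_I^m$ (with $I(t)=t^{\frac1{n'}}$) induce the same boundedness relations between r.i.~spaces, since $I_\Omega\approx I$ and the estimate $\|H_{I_\Omega}^m f\|_Y\approx\|H_I^m f\|_Y$ follows from the equivalence $I_\Omega\approx I$ together with \cite[Proposition 8.6]{CiPiSl} reducing $H_I^m$ to the single operator $H_J$ with $J(t)=I(t)^m/t^{m-1}$.

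Then I would invoke Theorem~\ref{cor:HI&SO} directly: $X$ is optimal domain for some $Y$ under $H_I^m$ iff $S_J$ is bounded on $X'$, and in that case the optimal target norm is $\|f\|_{Y_X}\approx\|\frac{J(t)}{t}(f^{**}(t)-f^*(t))\|_X+\|f\|_1$. Substituting $J(t)=t^{1-\frac mn}$ gives $\frac{J(t)}{t}=t^{-\frac mn}$, which is exactly the claimed formula $\|f\|_{Y_X}\approx\|t^{-\frac mn}(f^{**}(t)-f^*(t))\|_X+\|f\|_1$. Symmetrically, $Y$ is optimal target for some $X$ iff $T_J$ is bounded on $Y'$, and then $\|f\|_{X_Y}\approx\|\int_t^1\frac{f^*(s)}{J(s)}\d s\|_Y=\|\int_t^1 s^{\frac mn-1}f^*(s)\d s\|_Y$, since $\frac1{J(s)}=s^{\frac mn-1}$. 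Combining these two halves with the dictionary from \cite[Theorem 6.1]{CiPiSl} yields the theorem.

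\textbf{Main obstacle.} There is no genuinely new difficulty here; the only point requiring a little care is making the reduction from $H_{I_\Omega}^m$ on the domain $\Omega$ to the abstract operator $H_J$ on $(0,1)$ rigorous, i.e. checking that the equivalence $I_\Omega\approx t^{\frac1{n'}}$ is enough to identify the optimal spaces (not merely the boundedness) and that \cite[Proposition 8.6]{CiPiSl} legitimately collapses the $m$-fold iterate $H_I^m$ to $H_J$. Both facts are already established in \cite{CiPiSl} for John domains and polynomial profiles, so the proof is essentially an unwinding of definitions; the numerical hypothesis $m<n$ enters only to keep $J$ a genuine quasiconcave bijection of $(0,1)$, which is needed for $J\in\mathcal Q$ and hence for Theorem~\ref{cor:HI&SO} to be applicable.
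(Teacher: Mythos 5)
Your proposal is correct and follows essentially the same route as the paper: the paper obtains Theorem~\ref{thmmasd} precisely by combining the John-domain equivalence $H_{I_\Omega}^m\colon X\to Y\iff V^mX\to Y$ from \cite[Theorem 6.1]{CiPiSl} with Theorem~\ref{cor:HI&SO} applied with $\alpha=\frac{1}{n'}$, so that $J(t)=t^{1-\frac{m}{n}}$ and the condition $1-m(1-\alpha)>0$ becomes $m<n$. Your additional remark about reducing $H_{I_\Omega}^m$ to $H_J$ via $I_\Omega\approx t^{1/n'}$ and \cite[Proposition 8.6]{CiPiSl} matches exactly how the paper sets up Theorem~\ref{cor:HI&SO}.
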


Theorem~\ref{thmmasd} recovers \cite[Theorem A]{KePi2009} for Lipschitz domains and extends the result to John domains.

Now, the special case of \cite[Section 5.3.3]{Ma11} exhibits existence of a particular domain $\Omega_\alpha$ whose isoperimetric profile satisfies $I_{\Omega_\alpha}(t)\approx t^\alpha$. Let $\alpha\in \left[ \frac{1}{n'}, 1\right)$ and define $\eta_\alpha\colon \left[0 ,\frac{1}{1-\alpha}\right]\to [0, \infty)$ by $$\eta_\alpha(r)=\omega_{n-1}^{-\frac{1}{n-1}}(1-(1-\alpha)r)^\frac{\alpha}{(1-\alpha)(n-1)},\quad r\in \left[0, \frac{1}{1-\alpha}\right],$$ where $\omega_{n-1}$ denotes the Lebesgue measure of the unit ball in $\Rea^{n-1}$. \\Define $\Omega_\alpha\subset\Rea^n$ as 
\begin{equation}\label{eq:MazyaDomain}
    \Omega_\alpha=\left\{(x', x_n)\in\Rea^n: x'\in\Rea^{n-1}, 0 < x_n<\frac{1}{1-\alpha}, \abs{x'}<\eta_\alpha(x_n)\right\}.
\end{equation} Then $\abs{\Omega_\alpha}=1$ and $I_{\Omega_\alpha}(t)\approx t^\alpha, t\in \left[0, \frac{1}{2}\right]$. In particular, $\Omega_\alpha\in\mathcal J_\alpha$.

\begin{thm}\label{cor:MazSob}
    Let $\alpha\in\left[\frac{1}{n'}, 1\right)$ and $m\in\Nat$ be such that $1-m(1-\alpha)>0$. Put $J(t)=t^{1-m(1-\alpha)}$ and let $\Omega_\alpha$ be as in $\eqref{eq:MazyaDomain}$. Then an r.i.~space $X(\Omega_\alpha)$ is the optimal domain space in the $m$-th order Sobolev embedding for some r.i.~space $Y(\Omega_\alpha)$ if and only if $S_J$ is bounded on $X'$. In this case, $V^m X(\Omega)\to Y(\Omega)$ for every $\Omega\in\mathcal J_\alpha$ and $$\|f\|_{Y_X}\approx \|t^{-m(1-\alpha)}(f^{**}(t)-f^*(t))\|_X+\|f\|_1,\quad f\in\mathcal M_+(0, 1).$$

    Vice versa, an r.i.~space $Y(\Omega_\alpha)$ is the optimal target space in the $m$-th order Sobolev embedding for some r.i.~space $X(\Omega_\alpha)$ if and only if $T_J$ is bounded on $Y'$. In this case, $V^m X(\Omega)\to Y(\Omega)$ for every $\Omega\in\mathcal J_\alpha$ and $$\|f\|_{X_Y}\approx \left\|\int_t^1 s^{m(1-\alpha)-1}f^*(s)\d s\right\|_X,\quad f\in\mathcal M_+(0, 1).$$
\end{thm}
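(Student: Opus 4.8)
The plan is to deduce Theorem~\ref{cor:MazSob} from Theorem~\ref{cor:HI&SO} by transferring everything to the model domain $\Omega_\alpha$, which realizes the isoperimetric profile $t^\alpha$. Write $I(t)=t^\alpha$. First I would record, from \cite[Section~5.3.3]{Ma11}, that $|\Omega_\alpha|=1$ and $I_{\Omega_\alpha}(t)\approx t^\alpha$ on $[0,\tfrac{1}{2}]$, so in particular $\Omega_\alpha\in\mathcal J_\alpha$. Since the isoperimetric profile of $\Omega_\alpha$ is two-sided comparable to the quasiconcave power $I$, the reduction of Sobolev embeddings to isoperimetric inequalities from \cite{CiPiSl}, invoked analogously to the John-domain case preceding Theorem~\ref{thmmasd}, yields $V^mZ(\Omega_\alpha)\to W(\Omega_\alpha)\iff H_I^m\colon Z\to W$ for every pair of r.i.\ spaces $Z,W$. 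Because optimality of a domain (resp.\ target) space is quantified over all r.i.\ partner spaces, this equivalence makes ``optimal domain (resp.\ target) space in the $m$-th order Sobolev embedding on $\Omega_\alpha$'' coincide exactly with ``optimal domain (resp.\ target) space under $H_I^m$''.

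Next I would invoke Theorem~\ref{cor:HI&SO} with $J(t)=I(t)^m/t^{m-1}=t^{1-m(1-\alpha)}$, which is an increasing strictly concave bijection of $(0,1)$ lying in $\mathcal Q$ by Proposition~\ref{prop:polynomials}, and for which $H_I^m$ acts on r.i.\ norms like the single-weight operator $f\mapsto\int_t^1 f(s)/J(s)\,\d s$ by \cite[Proposition~8.6]{CiPiSl}. That theorem gives that $X$ is optimal under $H_I^m$ for some $Y$ iff $S_J$ is bounded on $X'$, in which case $\|f\|_{Y_X}\approx\|\tfrac{J(t)}{t}(f^{**}(t)-f^*(t))\|_X+\|f\|_1$; since $J(t)/t=t^{-m(1-\alpha)}$ this is the asserted domain-side formula, and dually $Y$ is optimal under $H_I^m$ for some $X$ iff $T_J$ is bounded on $Y'$, with $\|f\|_{X_Y}\approx\|\int_t^1 f^*(s)/J(s)\,\d s\|_Y=\|\int_t^1 s^{m(1-\alpha)-1}f^*(s)\,\d s\|_Y$.

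It then remains to add the clauses ``$V^mX(\Omega)\to Y(\Omega)$ for every $\Omega\in\mathcal J_\alpha$''. For these I would use only the \emph{sufficiency} half of the reduction, which needs nothing beyond the lower isoperimetric bound $I_\Omega(t)\gtrsim t^\alpha$, valid for every $\Omega\in\mathcal J_\alpha$ by definition of the class. Concretely, if $S_J$ is bounded on $X'$ then the previous step gives $H_I^m\colon X\to Y_X$, hence $V^mX(\Omega)\to Y_X(\Omega)$ for all $\Omega\in\mathcal J_\alpha$; symmetrically, if $T_J$ is bounded on $Y'$ then $H_I^m\colon X_Y\to Y$ and $V^mX_Y(\Omega)\to Y(\Omega)$ for all $\Omega\in\mathcal J_\alpha$.

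The main obstacle is the first step: justifying the \emph{full} equivalence $V^mZ(\Omega_\alpha)\to W(\Omega_\alpha)\iff H_I^m\colon Z\to W$. The sufficiency direction is the general reduction theorem of \cite{CiPiSl} and uses only $I_{\Omega_\alpha}\gtrsim t^\alpha$; the delicate necessity direction is precisely where the explicit construction of $\Omega_\alpha$ in \cite[Section~5.3.3]{Ma11} is indispensable, as it is the matching upper bound $I_{\Omega_\alpha}(t)\lesssim t^\alpha$ that lets one promote the operator estimate to the Sobolev inequality. Once this equivalence is secured on the single domain $\Omega_\alpha$, the rest is a transcription of Theorem~\ref{cor:HI&SO} together with the elementary identities $J(t)/t=t^{-m(1-\alpha)}$ and $1/J(s)=s^{m(1-\alpha)-1}$.
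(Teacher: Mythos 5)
Your proposal is correct and follows essentially the paper's own route: the paper's proof is just the citation of \cite[Theorem 6.4]{CiPiSl} (which packages exactly the equivalence ``$H_J\colon X\to Y$ iff $V^mX(\Omega)\to Y(\Omega)$ for every $\Omega\in\mathcal J_\alpha$'', with $\Omega_\alpha$ witnessing necessity) followed by an application of Theorem~\ref{thm:main}, and your reconstruction via $H_I^m$ and Theorem~\ref{cor:HI&SO} is the same argument, since $H_I^m$ and $H_J$ are interchangeable in r.i.~norms by \cite[Proposition 8.6]{CiPiSl}. The only blemish is the final clause, where the roles are swapped: the lower bound $I_\Omega\gtrsim t^\alpha$ promotes the operator estimate to the Sobolev inequality (sufficiency), while the explicit geometry of $\Omega_\alpha$, with $I_{\Omega_\alpha}\lesssim t^\alpha$, is what yields the converse (necessity) -- a wording slip that does not affect the argument.
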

\begin{proof}
    By \cite[Theorem 6.4]{CiPiSl} $H_J\colon X\to Y$ if and only if $\forall \Omega\in \mathcal J_\alpha\colon V^m X(\Omega)\to Y(\Omega)$. Hence the result follows from Theorem~\ref{thm:main}.
\end{proof}

We will now present a specification of our main results to Sobolev embeddings involving Lorentz--Zygmund spaces to obtain a new way to compute the norm and compare it to the known results.

\begin{thm}
     Let $\alpha\in \left[\frac{1}{n'}, 1\right)$ be given and let $m\in\Nat$ be such that $1-m(1-\alpha)>0$. Set $X=L^{p, q, \beta}$. Then for the optimal target space of $X$ in the Sobolev embedding we have
     \begin{equation}\label{eq:GLZ}
         \|f\|_{Y_X}\approx\left(\int_0^1 t^{\frac{q}{p}-1}\ell_1^{\beta q}(t)\left[\left(s\mapsto s^{-m(1-\alpha)}(f^{**}(s)-f^*(s))\right)^*(t)\right]^q\d t\right)^\frac{1}{q}+\|f\|_1.
     \end{equation} Furthermore, the norm in $\eqref{eq:GLZ}$ is equivalent to
     \begin{align*}
         &\left(\int_0^1 t^\frac{q-qpm(1-\alpha)-p}{p}\ell_1^{\beta q}(t)(f^*)^q(t)\d t\right)^\frac{1}{q}\quad \text{if }  p=q=1, \beta\geq0 \text{ or }p\in \left(1, \frac{1}{m(1-\alpha)}\right),\\
         &\left(\int_0^1 t^{-1}\ell_1^{q(\beta-1)}(t)(f^*)^q(t)\d t\right)^\frac{1}{q}\quad \text{if } p=\frac{1}{m(1-\alpha)}, \beta<1-\frac{1}{q},\\
         &\left(\int_0^1 t^{-1}\ell_1^{-1}(t)\ell_2^{-q}(t)(f^*)^q(t)\d t\right)^\frac{1}{q}\quad \text{if } p=\frac{1}{m(1-\alpha)}, q\in(1, \infty], \beta=1-\frac{1}{q},\\
         &f^*(0+)\quad \text{if } p=\frac{1}{m(1-\alpha)}, \beta>1-\frac{1}{q} \text{ or } p=\frac{1}{m(1-\alpha)}, q=1, \beta\geq0 \text{ or } p>\frac{1}{m(1-\alpha)}.
     \end{align*}
\end{thm}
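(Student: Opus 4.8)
The plan is to read off $\eqref{eq:GLZ}$ from Theorem~\ref{cor:HI&SO} and then to evaluate the Lorentz--Zygmund quasinorm occurring there in each parameter regime. Throughout write $\gamma_0:=m(1-\alpha)\in(0,1)$ and $p_0:=1/\gamma_0$, and set $J(t)=t^{1-\gamma_0}$, so that $J\in\mathcal Q$ by Proposition~\ref{prop:polynomials} and $J(t)/t=t^{-\gamma_0}$. Since a Lorentz--Zygmund norm sees only the nonincreasing rearrangement, the right-hand side of $\eqref{eq:GLZ}$ is exactly $\bigl\|t^{-\gamma_0}(f^{**}(t)-f^*(t))\bigr\|_{L^{p,q,\beta}}+\|f\|_1$, i.e.\ the quantity delivered by Theorem~\ref{cor:HI&SO} as soon as $S_J$ is bounded on $X'$. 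So the first task is to decide this boundedness. Because $J$ is a power smaller than $1$ it has the average property, so by the Remark after Theorem~\ref{thm:SImIsNice} one has $S_J\colon X'\to X'$ iff $X'\in\INT(L^\infty,m_J)$, where here $m_J=L^{p_0',\infty}$; for $X=L^{p,q,\beta}$ one has $X'=L^{p',q',-\beta}$ (a Zygmund space when $p=1$), and the classical description of the interpolation segment between $L^\infty$ and a Marcinkiewicz endpoint shows this holds precisely in the sub-critical range $p<p_0$ (including the endpoint $p=1$, $\beta\ge0$) and on the first critical line $p=p_0$, $\beta<1-\tfrac1q$. In all of these cases $\eqref{eq:GLZ}$ is immediate from Theorem~\ref{cor:HI&SO}. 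In the complementary, super-critical, range the kernel $s\mapsto s^{\gamma_0-1}$ of $H_I^m$ lies in $X'$ — a one-line integration shows this is the case iff $p>p_0$, or $p=p_0$ with $\beta>1-\tfrac1q$, or $p=p_0$, $q=1$, $\beta\ge0$ — so by the Hardy--Littlewood inequality $H_I^m\colon X\to L^\infty$, whence $Y_X=L^\infty$ and $\|f\|_{Y_X}=f^*(0+)$; the remaining line $p=p_0$, $\beta=1-\tfrac1q$ is a doubly-limiting case where $S_J$ just fails to be bounded on $X'$ and which I would handle by a direct estimate modelled on the first critical line.

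For the \emph{Furthermore} part the crucial observation is the identity $t^{-\gamma_0}\bigl(f^{**}(t)-f^*(t)\bigr)=t^{-\gamma_0-1}\Psi(t)$, where $\Psi(t):=\int_0^t\bigl(-s\,\d f^*(s)\bigr)$ is nondecreasing with $\Psi(0+)=0$, while $f^*(t)=f^*(1)+\int_t^1\tfrac1s\,\d\Psi(s)$ and, by Fubini, $\|f\|_1=f^*(1)+\Psi(1)$. This turns the right-hand side of $\eqref{eq:GLZ}$ into a weighted Hardy-type functional of $\Psi$ against the power--logarithmic weight $t^{q/p-1}\ell_1^{\beta q}(t)$, and the four regimes in the statement are exactly the sub-critical, first-limiting, second-limiting and degenerate behaviours of the associated averaging operator. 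Concretely: (i) away from criticality ($p<p_0$, or $p=1,\ \beta\ge0$) the Hardy inequality is available — the relevant weight exponent $q(\tfrac1p-\gamma_0)-1$ being $<q-1$ — so one passes from $f^{**}-f^*$ to $f^{**}$ and then trivially to $f^*$, obtaining $\|f\|_{Y_X}\approx\bigl\|t^{-\gamma_0}f^*\bigr\|_{L^{p,q,\beta}}=\bigl(\int_0^1 t^{q(1/p-\gamma_0)-1}\ell_1^{\beta q}(t)(f^*)^q(t)\,\d t\bigr)^{1/q}$, with $q(\tfrac1p-\gamma_0)-1=\tfrac{q-qpm(1-\alpha)-p}{p}$; (ii) on the first critical line $p=p_0$ one keeps $f^{**}-f^*$ and uses Fubini to rewrite $\int_0^1 t^{-1}\ell_1^{\beta q}(t)(f^{**}(t)-f^*(t))^q\,\d t$ as an iterated integral which, via the monotonicity of $t\mapsto t(f^{**}(t)-f^*(t))$, collapses to $\int_0^1 t^{-1}\ell_1^{(\beta-1)q}(t)(f^*)^q(t)\,\d t$, the gain of one power of $\ell_1$ being precisely the improvement of $f^{**}-f^*$ over $f^{**}$ at criticality (and the target $L^{\infty,q,\beta-1}$ is r.i.-admissible exactly when $\beta<1-\tfrac1q$); (iii) on the line $p=p_0,\ \beta=1-\tfrac1q$ the same computation produces the doubly-logarithmic weight $t^{-1}\ell_1^{-1}\ell_2^{-q}$; and (iv) in the super-critical range $\|f\|_{Y_X}=f^*(0+)$ directly, as established in the first paragraph. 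In each non-degenerate case the additive term $\|f\|_1$ is absorbed by Hölder's inequality since $s^{\gamma_0}\in X'=(L^{p,q,\beta})'$ there.

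The parts I expect to be most delicate are, first, pinning down the exact parameter thresholds — especially the three sub-cases at $p=p_0$ and the doubly-limiting line $p=p_0,\ \beta=1-\tfrac1q$, where neither Theorem~\ref{cor:HI&SO} nor the $L^\infty$-embedding argument applies verbatim — which requires careful bookkeeping with Lorentz--Zygmund norms, their associates, and the precise interpolation description of $\INT(L^\infty,L^{p_0',\infty})$; and second, and more essentially, the critical-line computation in steps (ii)--(iii), where the improvement of $f^{**}-f^*$ over $f^{**}$ (a full power of $\ell_1$, then of $\ell_2$) has to be extracted by hand. Since $f^{**}-f^*$ is neither monotone nor normable, no soft duality argument is available here, and the gain must be wrung out of the monotonicity of $t\mapsto t(f^{**}(t)-f^*(t))$ together with a Fubini rearrangement of the critical integral.
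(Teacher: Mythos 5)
Your route is genuinely different from the paper's: the paper proves this theorem by two citations, namely \eqref{eq:GLZ} is read off from Theorem~\ref{cor:MazSob}, and the whole ``Furthermore'' list is imported from \cite[Proposition~6.4]{Mi}, whereas you attempt a self-contained derivation. You correctly note that Theorem~\ref{cor:HI&SO} yields \eqref{eq:GLZ} only when $S_J$ is bounded on $X'$, you propose to locate that range via the interpolation criterion $X'\in\INT(L^\infty, m_J)$ with $m_J=L^{p_0',\infty}$, you treat the supercritical range by checking $s^{m(1-\alpha)-1}\in X'$ (whence $Y_X=L^\infty$), and you sketch weighted Hardy/Fubini computations for the remaining equivalences. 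That extra care about the hypothesis of Theorem~\ref{cor:HI&SO} is to your credit.

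However, as a proof the proposal has genuine gaps, and they sit exactly where the work is. First, the critical-line computations (your steps (ii)--(iii)), in which the extra power of $\ell_1$, respectively the $\ell_2$ factor, must be extracted from $f^{**}-f^*$, are only described, never carried out; this is precisely the content the paper delegates to \cite[Proposition~6.4]{Mi}, and, as the paper's closing remark (citing \cite{CaGoMaPi}) stresses, reductions from functionals of $f^{**}-f^*$ to functionals of $f^*$ are delicate, so ``Fubini plus monotonicity of $t(f^{**}(t)-f^*(t))$'' is a plan, not an argument. The doubly limiting line $p=\frac{1}{m(1-\alpha)}$, $\beta=1-\frac1q$ is explicitly deferred, and the characterization of $\INT(L^\infty,L^{p_0',\infty})$ you invoke is asserted without proof or precise citation. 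Second, and more structurally, your two tracks do not assemble into the stated theorem: you obtain \eqref{eq:GLZ} only where $S_J$ is bounded on $X'$, while in the supercritical range you prove $\|f\|_{Y_X}=f^*(0+)$ but never address \eqref{eq:GLZ} itself there; and in fact it cannot be rescued as written, since for $f=\chi_{(0,a)}$ one has $(f^{**}-f^*)(t)=\frac{a}{t}\chi_{(a,1)}(t)$, so the right-hand side of \eqref{eq:GLZ} is of order $a^{\frac1p-m(1-\alpha)}\ell_1^{\beta}(a)+a$, which blows up as $a\to0$ when $p>\frac{1}{m(1-\alpha)}$, whereas $\|f\|_{Y_X}\approx\|f\|_{\infty}=1$. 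So in the supercritical cases the display has to be dropped or ``the norm in \eqref{eq:GLZ}'' read as $\|f\|_{Y_X}$; the paper's citation-style proof glosses over this point, and your proposal neither proves the display there nor notices the obstruction, so the combination ``\eqref{eq:GLZ} from Theorem~\ref{cor:HI&SO} on part of the range'' plus ``$Y_X=L^\infty$ on the rest'' does not yield the statement as claimed.
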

\begin{proof}
    The relation $\eqref{eq:GLZ}$ is an immediate application of Theorem~\ref{cor:MazSob}. The equivalence can be derived from \cite[Proposition 6.4]{Mi}.
\end{proof}

\begin{rem}
    We would like to point out that, although the equivalence does not seem surprising, relations between spaces involving the quantity $f^{**}-f^*$ to those involving just $f^*$ are quite difficult, and even the question whether such spaces are linear is highly nontrivial, for details see, for example, \cite{CaGoMaPi}.
\end{rem}

\section*{Acknowledgments.}
This research was supported by grant no.~23-04720S of the Czech Science Foundation.

\bibliographystyle{dabbrv}
\bibliography{bibliography}

\end{document}